\newfont{\bb}{msbm10 at 11pt}
\newfont{\bbsmall}{msbm8 at 8pt}
\def\rth{\mathbb{R}^3}
\def\R{\mathbb{R}}
\def\B{\mathbb{B}}
\def\N{\mathbb{N}}
\def\Z{\mathbb{Z}}
\def\C{\mathbb{C}}
\def\D{\mathbb{D}}
\def\esf{\mathbb{S}}
\newcommand{\T}{\mathbb{T}}
\newcommand{\ben}{\begin{enumerate}}
\newcommand{\bit}{\begin{itemize}}
\newcommand{\een}{\end{enumerate}}
\newcommand{\eit}{\end{itemize}}
\newcommand{\wh}{\widehat}
\newcommand{\Div}{\mbox{\rm div}}
\newcommand{\ed}{\end{document}}
\def\a{{\alpha}}
\def\t{{\theta}}
\def\g{{\gamma}}
\def\G{{\Gamma}}
\def\l{{\lambda}}
\def\ve{{\varepsilon}}
\def\cF{{\cal F}}
\def\centerbmp#1#2#3{\vskip#2\relax\centerline{\hbox to#1{\special
    {bmp:#3 x=#1, y=#2}\hfil}}}
\newtheorem{theorem}{Theorem}[section]
\newtheorem{lemma}[theorem]{Lemma}
\newtheorem{proposition}[theorem]{Proposition}
\newtheorem{remark}[theorem]{Remark}
\newenvironment{proof}{\smallskip\noindent{\it Proof.}\hskip \labelsep}
{\hfill\penalty10000\raisebox{-.09em}{$\Box$}\par\medskip}
\begin{document}
\begin{title}
{The Riemann minimal examples}
\end{title}

\begin{author}
{William H. Meeks III,    \and Joaqu\'\i n P\' erez}
\end{author} \maketitle

\begin{abstract}
Near the end of his life, Bernhard Riemann made the marvelous
discovery of a 1-parameter family  $R_\l$, $\l\in (0,\infty)$,
of periodic properly embedded minimal surfaces in $\rth$ with
the property that every horizontal plane intersects
each of his examples in either a circle or a straight line. Furthermore, as the parameter $\l\to 0$
his surfaces converge to a vertical catenoid and as $\l\to \infty$ his surfaces converge to a vertical helicoid.
Since  Riemann's minimal examples are topologically planar domains
that are periodic with the fundamental domains for the associated $\Z$-action being diffeomorphic to a compact
annulus punctured in a single point, then topologically each of
these surfaces is diffeomorphic to the  unique genus zero
surface with two limit ends. Also he described  his surfaces analytically
in terms of elliptic functions on  rectangular elliptic curves.  This article exams
Riemann's original proof of the classification of  minimal surfaces foliated by circles and lines
in parallel planes
and  presents a complete outline of the recent proof that every
properly embedded minimal planar domain in $\rth$ is either a Riemann minimal example,
a catenoid, a helicoid or a plane.
\vspace{.3cm}

\noindent{\it Mathematics Subject Classification:} Primary 53A10
   Secondary 49Q05, 53C42, 57R30

\noindent{\it Key words and phrases:} Minimal surface, Shiffman function, Jacobi function,
Korteweg-de Vries equation, KdV
hierarchy, minimal planar domain.
\end{abstract}

\section{Introduction.}
Shortly after the death of Bernhard Riemann, 
 a large number of unpublished
handwritten papers were found in his office. Some of these papers were unfinished, but all of
them were of great interest  because of their profound insights and because of the deep and original
mathematical results
that they contained.
This  discovery of Riemann's handwritten unpublished manuscripts  led several of his students
and colleagues to rewrite these works, completing
any  missing arguments, and then to publish
them in their completed form in the Memoirs of the Royal Society of Sciences of G\"{o}ttingen as
a series of papers that began appearing in 1867.

One of these papers~\cite{ri2}, written by K. Hattendorf and M. H. Weber from Riemann's original notes
from the period 1860-61, was devoted to the theory of minimal surfaces in $\R ^3$. In one of these rewritten works,
Riemann 
described  several examples of compact surfaces with boundary
that minimized their
area among all surfaces with the given explicit boundary. In the last section of this manuscript Riemann
tackled the problem of classifying those minimal surfaces which are bounded by a pair of circles in parallel
planes, under the additional hypothesis that every intermediate plane between the planes containing
the boundary circles also intersects the surface in a circle. Riemann proved that the only
solutions to this problem are (up to homotheties and rigid motions) the catenoid and a previously unknown
1-parameter family of surfaces $\{R_\lambda \ | \  \lambda \in \R \},$ known today as the
{\it Riemann minimal examples.} Later in 1869, A. Enneper~\cite{en1} demonstrated that there do not exist minimal surfaces
foliated by circles in a family of nonparallel planes, thereby completing the classification of the minimal
surfaces foliated by circles.

This purpose of this article is threefold. Firstly we will recover the original arguments by Riemann
by expressing them in more modern mathematical language (Section~\ref{riemann});
more specifically, we will provide  Riemann's analytic classification of
minimal surfaces  foliated by circles in parallel planes.
We refer the reader to Figure~\ref{R-image} for an image of a Riemann
minimal surface created from his mathematical formulas and produced by the graphics package in
{\tt Mathematica.}
Secondly we will
illustrate how the family of Riemann's minimal examples are still of great interest in the current state
of minimal surface theory in $\R^3$. Thirdly, we will indicate the key
results that have led to the recent proof that the plane, the helicoid,
the catenoid and the Riemann minimal examples are the only
 properly embedded minimal surfaces in $\R^3$ with the topology of
a planar domain; see Section~\ref{MPR} for a sketch of this proof.
In regards to this result, the reader should note that the plane and the helicoid are
conformally diffeomorphic to the complex
plane, and the catenoid is conformally diffeomorphic to the complex plane punctured in a single
point; in particular these three examples are surfaces of finite topology. However,
the Riemann minimal examples
are planar domains of infinite topology,
diffeomorphic to each other and characterized topologically as being diffeomorphic
to the  unique surface of genus zero with two limit ends.
The proof that the properly embedded minimal surfaces of
infinite topology and genus zero are Riemann minimal examples is due to Meeks, P\'erez and Ros~\cite{mpr6},
and it uses sophisticated ingredients from many branches of mathematics. Two essential ingredients
in their proof of this classification result are  Colding-Minicozzi theory, which concerns the geometry of
embedded minimal surfaces of finite genus, and the theory of integrable systems associated to the
Korteweg-de Vries equation.

In 1956 M. Shiffman~\cite{sh1} generalized in some aspects Riemann's classification theorem;
Shiffman's main result shows that a compact minimal annulus that
is bounded by two circles in parallel planes must be foliated by circles in the intermediate planes.
Riemann's result is more concerned with classifying analytically such minimal surfaces and his proof
that we give in Section~\ref{riemann} is simple and self-contained;
in Sections~\ref{subsec:shiffman} and~\ref{MPR} we will explore some
aspects of the arguments by Shiffman. After the preliminaries of
Section~\ref{preli},
we will include in Section~\ref{enneper}
the aforementioned reduction by Enneper from the general case
in which the surface is foliated  by circles to the case where the
circles lie  in parallel planes.
In Section~\ref{secgraphics} 
we will introduce a
more modern viewpoint to study the Riemann minimal examples, which moreover will allow us to produce
graphics of these surfaces using the software package \textsf{Mathematica}. The
analytic tool for obtaining this graphical representation
will be the Weierstrass representation of a minimal surface, which is briefly described in
Section~\ref{subsec:weierstrass}. In general, minimal surfaces are geometrical objects that adapt  well
to rendering software, partly because from their analytical properties,
the Schwarz reflection principle for harmonic functions can be applied.
This reflection principle, that also explained in the preliminaries section,
allows one to represent relatively simple pieces of a minimal surface (where the computer graphics can achieve
high resolution), and then to generate the remainder of the  surface by simple reflections or
$180^\circ$-rotations
around lines contained in the boundaries of the simple fundamental piece.
On the other hand, as rendering software often represents a surface in space by producing the image
under the immersion of a mesh of points in the parameter domain, it especially important that we use
parameterizations whose associated meshes have natural properties,
such as preserving angles through the use of isothermal
coordinates.

\begin{remark}
{\rm
There is an interesting dichotomy between the situation in $\R^3$ and the one in $\R^n$, $n\geq 4$.
Regarding the natural $n$-dimensional generalization of the
problem tacked by Riemann, of producing a family of minimal hypersurfaces foliated by $(n-2)$-dimensional spheres,
W. C. Jagy~\cite{ja2} proved that if $n \geq 4$, then a minimal hypersurface in $\R^n$ foliated by hyperspheres
in parallel planes must be rotationally symmetric. Along this line of thought, we could say that the
Riemann minimal examples do not have a counterpart in higher dimensions.
}
\end{remark}

\noindent {\bf Acknowledgements}
The authors would like to express their gratitude to Francisco Mart\'\i n for
giving his permission to incorporate much of the material from a previous
joint paper~\cite{mape1} by him and the second author into the present manuscript. In particular, the
parts of our manuscript concerning
the classical arguments of Riemann and Weierstrass
 are largely rewritten translations of the paper~\cite{mape1}.

First author's financial support: This material is based upon work for
the NSF under Award No. DMS-1309236. Any opinions, findings, and
conclusions or recommendations expressed in this publication are those
of the authors and do not necessarily reflect the views of the NSF.
Second  author's financial support: Research partially supported by a MEC/FEDER
grant no. MTM2011-22547, and
Regional J. Andaluc\'\i a grant no. P06-FQM-01642.

\section{Preliminaries.} \label{preli}

Among the several equivalent defining formulations of  a minimal
surface $M\subset \R ^3$, i.e., a surface with
mean curvature identically zero, we  highlight the Euler-Lagrange equation
\[
(1+f_y^2)f_{xx}-2f_xf_yf_{xy}+(1+f_x^2)f_{yy}=0,
\]
where $M$ is expressed locally as the graph of a function $z=f(x,y)$ (in this paper we will use
the abbreviated notation
$f_x=\frac{\partial f}{\partial x},\partial f_{xx}$, etc., to refer to the partial derivatives
of any expression
with respect to one of its variables), and the formulation in local coordinates
\[
eG-2fF+Eg=0,
\]
where $\left( \begin{array}{cc}E & F \\ F & G \end{array}\right) , \left(
\begin{array}{cc}e & f \\ f & g \end{array}\right) $ are respectively the matrices of
the first and second fundamental forms of $M$ in a local parameterization. Of course, there are
other ways of characterize minimality such as by the harmonicity of the coordinate functions
or the holomorphicity of the Gauss map, but at this point it is worthwhile to remember  the
historical context in which the ideas that we wish to explain appeared.  Riemann was one of the founders of
the study of functions of one complex variable, and few things were well understood  in Riemann's time
concerning the relationship between minimal surfaces and holomorphic or harmonic functions.
Instead, Riemann imposed minimality by expressing the surface in implicit form, i.e., by espressing it
as the zero set of a smooth function $F\colon O\to \R $ defined in an open set $O\subset \R^3$, namely
\begin{equation}
\label{eq:div}
\Div \left( \frac{\nabla F}{| \nabla F| }\right) =0 \quad \mbox{ in $O$,}
\end{equation}
where  $\Div $ and $\nabla $ denote divergence and gradient in $\R ^3$, respectively.
The derivation of equation (\ref{eq:div}) is standard, but we next derive it for the
sake of completeness. First note that
\[
\Div \left( \frac{\nabla F}{| \nabla F| }\right) =\frac{1}{| \nabla F| }
\left( \Delta F-\frac{1}{| \nabla F| }(\nabla F)(| \nabla F
| ) \right) ,
\]
where $\Delta $ is the laplacian on $M$; hence (\ref{eq:div}) follows directly from the next lemma.
\begin{lemma}
\label{equivalencia}
If $0$ is a regular value of a smooth function $F\colon O \rightarrow \R$, then the surface
$M=F^{-1}(\{0 \})$ is minimal if and only if $| \nabla F| \Delta F=(\nabla F)(| \nabla F| )$ on $M$.
\end{lemma}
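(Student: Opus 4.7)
The goal is to reduce the claim to the classical identification of mean curvature with the divergence of a unit normal, and then apply the algebraic identity already displayed above the lemma statement.

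First I would verify the algebraic identity given just before the lemma by a direct computation. Writing $N = \nabla F/|\nabla F|$ on $O\setminus\{|\nabla F|=0\}$ and using $\Div(fX)=f\,\Div(X)+\langle \nabla f,X\rangle$ together with $\nabla(1/|\nabla F|)=-\nabla|\nabla F|/|\nabla F|^{2}$, one obtains
\[
\Div\!\left(\frac{\nabla F}{|\nabla F|}\right)
=\frac{1}{|\nabla F|}\left(\Delta F-\frac{1}{|\nabla F|}(\nabla F)(|\nabla F|)\right),
\]
where $\Delta$ is the Euclidean Laplacian in $\R^{3}$ and $(\nabla F)(|\nabla F|)=\langle\nabla F,\nabla|\nabla F|\rangle$ is the directional derivative of $|\nabla F|$ in the direction of $\nabla F$. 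Since $0$ is a regular value, $|\nabla F|>0$ on $M$, so on $M$ the vanishing of the left-hand side is equivalent to the identity $|\nabla F|\,\Delta F=(\nabla F)(|\nabla F|)$.

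Next I would show that $M$ is minimal if and only if $\Div(N)=0$ on $M$. Because $|\nabla F|>0$ on $M$ and $M$ is a closed submanifold of $O$, the vector field $N=\nabla F/|\nabla F|$ is a smooth unit vector field in an open neighborhood $U\subset O$ of $M$. At a point $p\in M$, pick an orthonormal basis $\{e_{1},e_{2},e_{3}\}$ of $\R^{3}$ with $e_{3}=N(p)$ and $e_{1},e_{2}\in T_{p}M$. Then
\[
\Div(N)(p)=\sum_{i=1}^{3}\langle D_{e_{i}}N,e_{i}\rangle
=\langle D_{e_{1}}N,e_{1}\rangle+\langle D_{e_{2}}N,e_{2}\rangle,
\]
since $\langle D_{e_{3}}N,N\rangle=\tfrac{1}{2}e_{3}(|N|^{2})=0$. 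The two surviving terms are exactly $-\operatorname{tr}(A_{p})$, where $A_{p}$ is the Weingarten shape operator of $M$ with respect to $N$; thus $\Div(N)(p)=-2H(p)$, where $H$ denotes the mean curvature of $M$. Hence $M$ is minimal precisely when $\Div(N)$ vanishes identically on $M$.

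Combining the two steps, $M$ is minimal if and only if $\Div(\nabla F/|\nabla F|)=0$ on $M$, and by the identity established in the first step this happens if and only if $|\nabla F|\,\Delta F=(\nabla F)(|\nabla F|)$ on $M$, which is the content of the lemma.

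The only delicate point, and the one I would be most careful about, is the interpretation of $\Delta$: despite the phrasing above the lemma, the identity used here is for the Euclidean Laplacian in $\R^{3}$, not the intrinsic Laplacian on $M$, and similarly $\Div$ and $\nabla$ are computed in $\R^{3}$ on the open neighborhood $U$ where the extension of the unit normal lives. Once that is clarified, the geometric step identifying $\Div(N)=-2H$ on $M$ is the only non-trivial ingredient, and it follows immediately from choosing an adapted orthonormal frame as above.
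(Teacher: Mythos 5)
Your proof is correct, but it is organized in the reverse logical order from the paper's. The paper never passes through the statement $\Div(\nabla F/|\nabla F|)=-2H$: it proves the lemma directly by writing $\Delta F=\mbox{trace}(\nabla ^2F)$, splitting the trace over an adapted frame into the tangential part $\sum_{i}(\nabla ^2F)(E_i,E_i)=|\nabla F|\,\mbox{trace}(dN)=-2|\nabla F|H$ and the normal part $(\nabla ^2F)(N,N)=\frac{1}{|\nabla F|}(\nabla F)(|\nabla F|)$, and only afterwards uses the product-rule identity to deduce the divergence-form minimal surface equation (\ref{eq:div}) as a corollary of the lemma. You instead take the identity $\Div(N)=-2H$ for the extended unit normal as your key lemma, verify the product-rule identity separately, and obtain the statement by combining the two. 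The computations are close cousins --- in both cases the crux is a trace over an adapted orthonormal frame in which the normal--normal term is handled separately --- but your route has the advantage of isolating the standard geometric fact $\Div(N)=-2H$ (so equation (\ref{eq:div}) comes for free along the way), while the paper's route works directly with the Hessian of $F$ and avoids having to extend and differentiate the normalized field. Your closing remark is also well taken: the paper's parenthetical ``where $\Delta $ is the laplacian on $M$'' is a slip, since both the displayed identity and the lemma only make sense with $\Delta $, $\nabla $ and $\Div $ taken in the ambient $\R ^3$ (on the open set $O$), exactly as you use them.
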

\begin{proof}
Since the tangent plane to $M$ at a point $p\in M$ is $T_pM=\ker (dF_p)=\langle (\nabla F)_p\rangle ^{\perp },$
then $(\nabla F)|_M$ is a nowhere zero vector field normal to $M$, and
$N=( \frac{\nabla F}{|\nabla F|}) |_M$ is a Gauss map for $M$. On the other hand,
\begin{equation}
\label{eq:lema1-1}
\Delta F=\mbox{trace}(\nabla ^2F)=\sum _{i=1}^2(\nabla ^2F)(E_i,E_i)+(\nabla ^2F)(N,N)
\end{equation}
where $\nabla ^2F $ is the hessian of $F$ and $E_1,E_2$ is a local orthonormal frame tangent
to $M$. As $(\nabla ^2F)(E_i,E_i)=\langle E_i(\nabla F)
,E_i\rangle =\langle E_i(|\nabla F| N),E_i\rangle =| \nabla F| \langle dN(E_i),E_i\rangle $,
then
\begin{equation}
\label{eq:lema1-2}
\sum _{i=1}^2(\nabla ^2F)(E_i,E_i)=| \nabla F|\mbox{ trace}(dN)=-2| \nabla F| H,
\end{equation}
with $H$ the mean curvature of $M$ with respect to $N$. Also,
$|\nabla F|^2(\nabla ^2F)(N,N)=(\nabla ^2F)(\nabla F,\nabla F)=
\frac{1}{2}(\nabla F)(| \nabla F| ^2)=
| \nabla F| (\nabla F) (| \nabla F| ) $. Plugging this formula
and (\ref{eq:lema1-2}) into (\ref{eq:lema1-1}), we get
$| \nabla F| \Delta F=-2| \nabla F| ^2H+(\nabla F)(| \nabla F| )$, and the proof
is complete.
\end{proof}

\subsection{Weierstrass representation.}
\label{subsec:weierstrass}
In the period 1860-70, Enneper and Weierstrass obtained representation formulas
for minimal surfaces in $\R^3$
by using curvature lines as parameter lines. Their formulae have become fundamental in the study of
orientable minimal surfaces (for nonorientable minimal surfaces there are similar formulations,
although we will not describe them  here). The reader can find a detailed explanation of the
Weierstrass representation in treatises on minimal surfaces by Hildebrandt et al.~\cite{dhkw1},
Nitsche~\cite{ni2} and Osserman~\cite{os1}. The starting point is the well-known formula
\[
\Delta X =2 \, H \, N,
\]
valid for an isometric immersion $X=(x_1,x_2,x_3)\colon M \to \R ^3$ of a Riemannian surface
into Euclidean space, where $N$ is the Gauss map, $H$ is the mean curvature and $\Delta X=
(\Delta x_1,\Delta x_2,\Delta x_3)$ is the Laplacian of the immersion. In particular,
minimality of $M$ is equivalent to the harmonicity of the coordinate functions $x_j$, $j=1,2,3$.
In the sequel, it is worth considering $M$ as a Riemann surface. We will also denote by $i=\sqrt{-1}$
and Re, Im will stand for real and imaginary parts.

We denote by $x_j^*$ the harmonic conjugate of $x_j$, which is locally well-defined up to additive constants.
Thus,
\[
\phi_j :=d x_j +i\,d x_j^\ast  ,
\]
is a holomorphic 1-form, globally defined on $M$. If we choose a base point $p_0\in M$, then the
equality
\begin{equation}
\label{eq:WR}
X(p)=X(p_0)+\mbox{Re} \int_{p_0}^p (\phi_1,\phi_2,\phi_3 ), \quad p \in M,
\end{equation}
recovers the initial minimal immersion,  where integration  in (\ref{eq:WR})
does not depend on the path in $M$ joining $p_0$ to $p$.

The information encoded by $\phi _j$, $j=1,2,3$, can be expressed with only two
pieces of data (this follows
from the relation that $\sum _{j=1}^3\phi _j^2=0$); for instance, the meromorphic function
$g=\frac{\phi_3}{\phi_1-i\phi_2} $ together with $\phi _3$ produce the other two 1-forms by means of
the formulas
\begin{equation}
\label{eq:phij}
\phi_1  =  \frac{1}{2} \left(\frac 1g-g \right) \phi_3, \quad
\phi_2  = \frac{i}{2} \left(\frac 1g+g \right) \phi_3,
\end{equation}
with the added bonus that $g$ is the stereographic projection of the Gauss map $N$ from the north
pole, i.e.,
\[
N=\left( 2 \: \frac{\mbox{Re}(g)}{1+|g|^2},  2 \:  \frac{\mbox{Im}(g)}{1+|g|^2},
  \frac{|g|^2-1}{1+|g|^2} \right) . \]

We finish this preliminaries section with the statement of the celebrated Schwarz reflection principle
that will be useful later. In 1894, H. A. Schwarz adapted his reflection principle for real-valued
harmonic functions in open sets of the plane to obtain the following result
for minimal surfaces. The classical proof of this principle can be found in
Lemma 7.3 of~\cite{os1}, and a different proof, based on the so-called Bj\"{o}rling problem,
can be found in \S$3.4$ of~\cite{dhkw1}.

\begin{lemma}[Schwarz]
\label{th:schwarz}
Any straight line segment (resp. planar geodesic) in a minimal surface is an axis of  a
$180^\circ$-rotational symmetry
(resp. a reflective symmetry in the plane containing the geodesic) of the surface.
\end{lemma}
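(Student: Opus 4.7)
The plan is to work locally near an arbitrary point $p\in L$ (resp.\ $p\in\gamma$) and reduce the geometric statement to the classical Schwarz reflection principle for real harmonic functions in the plane. The key analytic input is that the coordinate functions of an isometric minimal immersion are harmonic in any isothermal chart, which follows from the identity $\Delta X=2HN$ quoted in the preliminaries together with $H\equiv 0$. The key geometric input is to extract the correct pair of Dirichlet/Neumann boundary data on $L$ (or $\gamma$) from the hypotheses.

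Fix a conformal parameterization $X\colon U\to\R^3$ of a neighborhood of $p$ in $M$, with $U\subset\C$ an open disk symmetric under complex conjugation and $L$ (resp.\ $\gamma$) corresponding to the real interval $I=U\cap\R$; all three functions $x_j(u,v)$ are then harmonic. For case (1), place the ambient coordinates so that $L$ lies along the $x_3$-axis, so $x_1\equiv x_2\equiv 0$ on $I$. Since $X_u|_I=(0,0,\partial_u x_3)$ is vertical with $\partial_u x_3\neq 0$, the conformality relations $\langle X_u,X_v\rangle=0$ and $|X_v|=|X_u|$ force $X_v|_I$ to be horizontal, yielding the complementary Neumann datum $\partial_v x_3|_I=0$.

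For case (2), place the ambient coordinates so that $\Pi=\{x_3=0\}$, giving $x_3\equiv 0$ on $I$. If $\gamma$ is a straight line we are already reduced to case (1), so assume $\gamma$ has nonzero curvature on an open dense subset of $I$; planarity then forces the osculating plane to coincide with $\Pi$ there, while the geodesic condition says the principal normal of $\gamma$ is parallel to $N$, so $N\in\Pi$ along $\gamma$, i.e.\ $N_3\equiv 0$ on $I$ by continuity. Writing $X_u|_I=(\partial_u x_1,\partial_u x_2,0)$ and $X_v|_I=(A,B,C)$, the conformality relation gives $A\,\partial_u x_1+B\,\partial_u x_2=0$ while the vanishing of $N_3$ gives $B\,\partial_u x_1-A\,\partial_u x_2=0$; the determinant of this linear system in $(A,B)$ equals $|X_u|^2\neq 0$, so $A=B=0$ and we obtain $\partial_v x_1|_I=\partial_v x_2|_I=0$.

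Finally apply the classical Schwarz reflection principle for harmonic functions: a harmonic function on $U$ vanishing on $I$ is odd under $v\mapsto -v$, and a harmonic function whose normal derivative vanishes on $I$ is even (both statements reduce to unique continuation for $w(u,v):=x(u,v)\mp x(u,-v)$, which is harmonic with vanishing Cauchy data on $I$). In case (1) this yields $X(u,-v)=(-x_1(u,v),-x_2(u,v),x_3(u,v))$, exhibiting the $180^\circ$-rotation about $L$; in case (2) it yields $X(u,-v)=(x_1(u,v),x_2(u,v),-x_3(u,v))$, the reflection across $\Pi$. Real-analyticity of the minimal immersion then propagates the local symmetry throughout the connected surface $M$. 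The main obstacle is extracting the Neumann data: the Dirichlet halves are immediate from the adapted choice of ambient coordinates, but the Neumann halves are where the geometric hypotheses genuinely enter, through the tangency of $L$ to $M$ in case (1) and through the subtle interplay of planarity with the geodesic condition via the osculating plane in case (2).
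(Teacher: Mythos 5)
The paper does not actually prove this lemma: it states it and refers the reader to Lemma~7.3 of Osserman's book and to the Bj\"orling-problem proof in \S 3.4 of Dierkes--Hildebrandt et al. Your argument is a correct, self-contained rendition of the first of these routes (harmonic reflection applied to the coordinate functions in an isothermal chart), and the substance is exactly right: the Dirichlet halves of the Cauchy data come from the adapted ambient coordinates, the Neumann halves come from conformality together with the verticality of $X_u$ in case (1) and the condition $N_3=0$ in case (2), and the odd/even extensions follow from unique continuation for harmonic functions with vanishing Cauchy data on $I$. Compared with the Bj\"orling route, your approach is more elementary (no existence/uniqueness theorem for the Bj\"orling problem is needed), at the cost of having to justify two standard facts you pass over quickly: that the preimage of $L$ (resp.\ $\gamma$) can be conformally straightened to $U\cap\R$ (this uses that minimal immersions are real-analytic, so the preimage is an analytic arc), and that the local symmetry propagates to the whole connected surface (unique continuation for minimal surfaces, with the usual caveat that one really obtains invariance of the maximal analytic extension).

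One genuine wrinkle: your sentence ``if $\gamma$ is a straight line we are already reduced to case (1)'' does not deliver the conclusion asserted in case (2). Case (1) yields the $180^\circ$-rotation about $L$, not the reflection in a plane $\Pi$ containing $L$, and the latter is in general \emph{false} for a straight line: the helicoid contains its axis, yet is not invariant under reflection in a generic plane through that axis (such a reflection reverses the handedness of the helicoid). The resolution is that the planar-geodesic clause of the lemma must be read as assuming $\gamma$ is not a straight segment --- equivalently, that the osculating plane is defined somewhere, which is precisely the hypothesis your curvature argument uses to conclude $N\in\Pi$. You should state this exclusion explicitly rather than phrase it as a reduction; with that amendment the proof is complete.
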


\section{Enneper's reduction of the classification problem to foliations by circles in parallel planes.}
\label{enneper}
We will say that a surface $M \subset \R ^3$ is foliated by circles if it can be parameterized as
\begin{equation}
\label{para}
X(u,v)= c(u)+r(u) \left( \cos v  \, {\mathbf v}_1(u)+\sin v  \, {\mathbf v}_2(u)
\right), \quad u_0 <u<u_1, \; 0 \leq v<2 \pi,
\end{equation}
where $c\colon (u_0,u_1)\to \R^3$ is a curve that parameterizes the centers of the circles in the
foliation, and $r\colon (u_0,u_1)\to (0,\infty )$ is the radius of the foliating circle. Let
${\mathbf v}_1(u)$ and ${\mathbf v}_2(u)$ be  an orthonormal basis of the linear
subspace associated to the affine plane that contains the foliating circle. The purpose of this
section is to prove Enneper's reduction.

\begin{proposition}[Enneper]
\label{th:enneper}
If a minimal surface $M \subset \R ^3$ is foliated by circles, then these circles are
contained in parallel planes.
\end{proposition}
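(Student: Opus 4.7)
My plan is to impose minimality directly on the parametrization~\eqref{para} in a convenient moving frame, and then to exploit the fact that the minimal surface equation must hold identically in the circular parameter $v$. Expanding the PDE as a trigonometric polynomial in $v$ and equating its $u$-dependent coefficients to zero yields a system of ODEs whose top Fourier modes should force the unit normal $\mathbf{n}(u):=\mathbf{v}_1(u)\times\mathbf{v}_2(u)$ to the foliating plane to be constant, which is exactly the desired conclusion.

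Writing the Frenet-type equations
\[
\mathbf{v}_1' = \alpha\mathbf{v}_2 + \beta\mathbf{n}, \quad \mathbf{v}_2' = -\alpha\mathbf{v}_1 + \gamma\mathbf{n}, \quad \mathbf{n}' = -\beta\mathbf{v}_1 - \gamma\mathbf{v}_2,
\]
I first absorb $\alpha$ by a $u$-dependent rotation $v\mapsto v+\theta(u)$ inside each foliating plane; this preserves the form of~\eqref{para}, so I may assume $\alpha\equiv 0$. The proposition then reduces to showing $\beta\equiv\gamma\equiv 0$. Decomposing $c'(u) = a\mathbf{v}_1 + b\mathbf{v}_2 + h\mathbf{n}$, a direct computation gives
\[
X_v = r(-\sin v\,\mathbf{v}_1 + \cos v\,\mathbf{v}_2), \qquad X_u = (a+r'\cos v)\mathbf{v}_1 + (b+r'\sin v)\mathbf{v}_2 + (h + r\beta\cos v + r\gamma\sin v)\mathbf{n}.
\]

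From here I would compute $E,F,G$, the cross product $X_u\times X_v$, and the second derivatives $X_{uu},X_{uv},X_{vv}$, and substitute into the unnormalized minimality equation
\[
\bigl\langle X_{uu}G - 2X_{uv}F + X_{vv}E,\; X_u\times X_v\bigr\rangle = 0.
\]
This produces a trigonometric polynomial $P(u,v)$ of bounded degree in $\cos v,\sin v$ whose $u$-dependent Fourier coefficients must all vanish identically. The structural observation I would push is that the translation data $a,b,h$ appear in $X_u$ (and in the analogous decompositions of $X_{uu},X_{uv},X_{vv}$) only as coefficients of the lowest trigonometric monomials, while the twisting data $\beta,\gamma$ always come paired with $\cos v$ or $\sin v$. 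Consequently the highest Fourier modes of $P(u,v)$ depend only on $r,\beta,\gamma$ and their derivatives, and I expect their vanishing to yield an algebraic identity of the form $r^2(\beta^2+\gamma^2)\,\Phi(u)=0$ with $\Phi\not\equiv 0$, forcing $\beta = \gamma = 0$.

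The main obstacle is the purely algebraic bookkeeping: one has to track many trigonometric monomials and verify that the top Fourier modes really do decouple from $(a,b,h)$ and factor through $\beta^2+\gamma^2$. Once $\mathbf{n}'\equiv 0$ is established on the open set where the foliating planes are not already locally parallel, real-analyticity of minimal surfaces (equivalently, harmonicity of the coordinate functions of the immersion) propagates the conclusion to the entire parameter interval $(u_0,u_1)$, establishing Proposition~\ref{th:enneper}.
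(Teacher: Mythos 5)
Your setup (adapted moving frame, normalization $\alpha\equiv 0$ by rotating $v$, expansion of the minimality equation as a trigonometric polynomial in $v$) is sound and is essentially the same strategy as the paper's proof, which uses the Frenet frame of an integral curve of the plane normal instead of your Bishop-type frame. The gap is in the step you flag as ``bookkeeping'' but whose outcome you predict incorrectly: the highest Fourier modes do \emph{not} decouple from the translation data of the center curve, and their vanishing does \emph{not} yield an identity of the form $r^2(\beta^2+\gamma^2)\Phi(u)=0$. In the paper's computation the coefficients of $\cos 3v$ and $\sin 3v$ are
\[
a_1=-\tfrac12 r^3\kappa\bigl(\beta^2-\delta^2+r^2\kappa^2\bigr),\qquad a_2=-r^3\beta\delta\kappa,
\]
where $\kappa=|t'|$ is the twisting of the planes but $\beta,\delta$ are components of $c'$ in your notation, i.e., translation data (your $a,b,h$). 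Setting $a_1=a_2=0$ with $\kappa\neq 0$ does not produce a contradiction; it is solvable, giving $\beta=0$ and $\delta^2=r^2\kappa^2$. Geometrically, the top modes alone permit a putative configuration in which the tilting of the planes is compensated by a drift of the circle centers in the binormal direction, so no argument confined to the top modes can conclude.

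To close the proof one must cascade through the lower modes, exactly as Riemann--Enneper do: the degree-$3$ equations give $\beta=0$ and $\delta^2=r^2\kappa^2$; substituting into the degree-$2$ equations ($a_4=0$, then $a_3=0$) gives $r'=0$ and then $\alpha=0$; and only then does the degree-$1$ equation $a_5=0$ collapse to $-3r^5\kappa^3=0$, contradicting $r>0$, $\kappa>0$. So your framework is workable, but the claimed shortcut through the top Fourier modes fails, and the heart of the proof is precisely the interplay between the translation and twisting data across several modes. (The concluding appeal to real-analyticity is also unnecessary: one argues by contradiction on a subinterval where $t'\neq 0$, and the contradiction shows this set is empty.)
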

\begin{proof}
Let $\{\mathcal{C}_u \ | \ u\in (u_0,u_1) \}$ be the smooth, 1-parameter family of circles
that foliate $M$. Let $t\colon (u_0,u_1)\to \esf^2(1)$ be the unit normal vector to the plane that contains
$\mathcal{C}_u$. It suffices to show that $t(u)$ is constant. Arguing by contradiction, assume that
$t$ is not constant. After possibly restricting to a subinterval, we can suppose that $t'(u) \neq 0$
for all $u \in (u_0,u_1)$. Take a curve $\gamma:(u_0,u_1) \to \R ^3$ with $\gamma'(u)=t(u)$, for all
$u \in (u_0,u_1)$. The condition that $t'(u)$ vanishes nowhere implies that the curvature function
$\kappa (u)$ of $\g $ is everywhere positive.
Let $n(u)$, $b(u)$ be the normal and binormal (unit) vectors to $\g$, i.e.,
$\{ t,n,b\} $ is the Frenet dihedron of $\g $, and let $\tau $ be the torsion of $\g $.
The surface $M$ can be written as in (\ref{para})
with ${\mathbf v}_1=n$ and ${\mathbf v}_2=b$. Our purpose is to express the minimality condition
$H=0$ in terms of this parameterization. To do this, denote by
\[
\begin{array}{ccc}
E= |X_u|^2, & F=\langle X_u,X_v \rangle, & G=|X_v|^2,
\\
\rule{0cm}{.6cm}
e=\frac{\det (X_u,X_v,X_{uu})}{|X_u \wedge X_v|}, &
f=\frac{\det(X_u,X_v,X_{uv})}{|X_u \wedge X_v|} , &
g=\frac{\det (X_u,X_v,X_{vv})}{|X_u\wedge X_v|}
\end{array}
\]
the coefficients of the first and second fundamental forms of $M$ in the parameterization~$X$.

If  $(\alpha,\beta,\delta)$ corresponds to the coordinates of the velocity vector $c'$
of the curve of centers $c(u)$ with respect to the orthonormal basis $\{t,n,b \}$, then
a straightforward computation that only uses the Frenet equations for $\g $ leads to
\begin{multline*}
e\,G-2 \, f \, F+E \, g= \tfrac{1}{{|X_u \wedge
X_v|}}\left({a_1}\,\cos (3\,v)+ {a_2}\,\sin (3\,v)+   \right. \\ \left.
{a_3}\,\cos (2\,v) + {a_4}\,\sin (2\,v) +{a_5}\,\cos v+{a_6}\,\sin
v+{a_7}\right),
\end{multline*}
where the functions $a_j$, $j=1, \ldots,7$ depend only on the parameter $u$ and are given in terms
of the radius $r(u)$ of ${\cal C}_u$ and the curvature $\kappa (u)$ and torsion $\tau (u)$ of $\g$ by
\begin{eqnarray*}
a_1 & = & -\tfrac{1}{2} {r}^3\,\kappa \,\left( {\beta }^2 - {\delta }^2 +
{r}^2\,{\kappa }^2 \right),\\
a_2 & = & -{r}^3\,\beta \,\delta \,\kappa,\\
a_3 & = & \tfrac{{r}^3}{2}\,\left( -6\,\beta \,\kappa \,r' + r\,\left( 5\,\alpha
\,{\kappa }^2 + \kappa \,\beta ' - \beta \,\kappa ' \right)  \right), \\
a_4 & = & \tfrac{{r}^3}{2}\,\left( r\,\kappa \,\delta ' - \delta \,\left( 6\,\kappa
\,r' + r\,\kappa ' \right)  \right),\\
a_5 & = & -\tfrac{{r}^2}{2} \,\left( 3\,{r}^3\,{\kappa }^3 - 4\,\alpha \,\beta \,r'
+
\right. \\
& &     \left.   r\,\left( 8\,{\alpha }^2\,\kappa  + 3\,{\beta }^2\,\kappa  +
           3\,\kappa \,\left( {\delta }^2 + 2\,{(r')}^2 \right)  -   2\,\beta
\,\alpha ' + 2\,\alpha \,\left( \delta \,\tau  + \beta ' \right)  \right)  +
        2\,{r}^2\,\left( r'\,\kappa ' - \kappa \,r'' \right)  \right) ,
\\
a_6 & = & {r}^2\,\left( 2\,\alpha \,\delta \,r' + {r}^2\,\kappa \,\tau \,r' +
    r\,\left( \delta \,\alpha ' + \alpha \,\left( \beta \,\tau  - \delta '
\right)  \right)  \right),\\
a_7 & = & \tfrac{{r}^2}{2} \,\left( 2\,{\alpha }^3 + r\,\left( 2\,r'\,\left(
-2\,\beta \,\kappa  + \alpha ' \right)  + r\,\left( \kappa \,\left( 2\,\delta \,\tau
+ \beta ' \right)  - \beta \,\kappa
' \right)  \right)  + \right. \\
    & &   \left. \alpha \,\left( 2\,{\beta }^2 + 2\,{\delta }^2 + 5\,{r}^2\,{\kappa
}^2 + 2\,{(r')}^2 - 2\,r\,r'' \right)
          \right).
\end{eqnarray*}

As the functions in $\{ \cos(n \, v), \sin ((n+1) \, v)\ | \ n \in \N \cup \{ 0 \} \} $ are linearly
independent, then the condition $H=0$ is equivalent to $a_j= 0$, for each $j=1, \ldots,7$.
Since $r>0$ and $\kappa >0$, then the conditions $a_1=a_2 =0$ above imply
that $\beta=0$ and $\delta^2=r^2 \kappa^2$. Plugging this into $a_4=0$, we get $5 r \, r' \, \kappa^2=0$,
from where $r'=0$. Substituting this into
$a_3=0$ we get $5 r \, \alpha \, \kappa^2=0$, hence $\alpha=0$. Finally, plugging
$\alpha=\beta= r'=0$ and $\delta^2=r^2 \kappa^2$ into $a_5=0$, we deduce that $-3 r^5\kappa^3=0$, which
is a contradiction. This contradiction shows that $t(u)$ is constant and the proposition is proved.
\end{proof}

\section{The argument by Riemann.}
\label{riemann}
In this section we explain the classification by Riemann of the minimal surfaces in $\R^3$ that are
foliated by circles. By Proposition~\ref{th:enneper}, we can assume that the foliating circles of our minimal
surface $M\subset \R^3$ are contained in parallel planes, which after a rotation we will  assumed to be  horizontal.
We will take the height $z$ of the plane as a parameter of the foliation, and denote by $(\a (z),z)=
(\a _1(z),\a _2(z),z)$ the center of the circle $M\cap \{ x_3=z\} $ and $r(z)>0$ its radius;
here we are using the identification $\R ^3=\R^2\times \R $. The functions $\a _1,\a _2,r$ are assumed to be
of class $C^2$ in an interval $(z_0,z_1)\subset \R$.

Consider the function $F:\R^2\times (z_0,z_1)\to \R$
given by
\begin{equation}
\label{eq:F}
 F(x,z)=| x-\a (z)| ^2-r(z)^2,
\end{equation}
where $x=(x_1,x_2)\in \R^2$. So, $M\subset F^{-1}(\{ 0\} )$ and thus, Lemma~\ref{equivalencia}
 gives that $M$ is minimal if and only if
\[
F_z^2+| x-\a | ^2\left( 2+F_{zz}\right) +2\langle x-\a ,\a '\rangle F_z=0 \quad \mbox{ in $M$}.
\]
As $| x-\a | ^2=r^2$ in $M$ and $2\langle x-\a ,\a '\rangle
=-\left( | x-\a | ^2\right) _z=-(F+r^2)_z=-F_z-(r^2)'$, then the minimality of $M$ can be written as
\begin{equation}
\label{eq:teor2-3}
2 r^2+r^2F_{zz}-(r^2)'F_z=0.
\end{equation}

The argument amounts to integrating (\ref{eq:teor2-3}). First we divide by $r^4$,
\[
0=\frac{2}{r^2}+\frac{r^2F_{zz}-(r^2)'F_z}{r^4}=\frac{2}{r^2}+\left( \frac{F_z}{r^2}\right) _z,
\]
and then we integrate with respect to $z$, obtaining
\begin{equation}
\label{eq:teor2-4}
f(x)=2\int ^z\frac{du}{r(u)^2}+\frac{F_z(x,z)}{r(z)^2},
\end{equation}
for a certain function of $x$. On the other hand, (\ref{eq:F}) implies that
\[
\frac{\partial }{\partial x_j}\left(
\frac{F_z}{r^2}\right) = -\frac{2\a _j'}{r^2},
\]
which is a function depending only on $z$, for each $j=1,2$. Therefore,
for $z$ fixed, the function $x\mapsto \frac{F_z}{r^2}$ must be
affine. As $\int ^z\frac{du}{r(u)^2}$ only depends on $z$, we conclude from
(\ref{eq:teor2-4}) that $f(x)$ is also an affine function of $x$, i.e.,
\begin{equation}
\label{eq:teor2-5}
2\int ^z\frac{du}{r(u)^2}+\frac{F_z(x,z)}{r(z)^2}=2\langle a,x\rangle +c
\end{equation}
for certain $a=(a_1,a_2)\in \R^2$, $c\in \R $. Taking derivatives in (\ref{eq:teor2-5})
with respect to $x_j$, we get $\a _j'(z)=-a_jr^2(z)$; hence
\[
\a (z)=-m(z)\, a \quad \mbox{ where }m(z)=\int ^zr(u)^2\, du.
\]
These formulas determine the center of the circle $M\cap \{ x_3=z\} $ up to a horizontal translation
that is independent of the height.

In order to determine the radius of the circle $M\cap \{ x_3=z\} $ as a function of $z$, we come back to
equation (\ref{eq:teor2-3}). Since $F_z=-2\langle x-\a ,\a '\rangle -(r^2)'$, then
$F_{zz}=2| \a '| ^2-2\langle x-\a ,\a ''\rangle -(r^2)''$. Plugging $\a
'=-r^2\, a$ into these two expressions, and this one into (\ref{eq:teor2-3}), we deduce that
the minimality of $M$ can be rewritten as
\begin{equation}
\label{eq:teor2-6}
2| a|^2r^6+((r^2)')^2+r^2(2-(r^2)'')=0,
\end{equation}
which is an ordinary differential equation in the function $q(z)=r(z)^2$. Solving this ODE is straightforward:
first note that
\[
\left( \frac{(q')^2}{q^2}\right) '=\frac{2q'}{q^3}\left[ -(q')^2+qq''\right]
\stackrel{(\ref{eq:teor2-6})}{=}
\frac{2q'}{q^3}\left( 2| a| ^2q^3+2q\right) =4\left( | a| ^2q'+\frac{q'}{q^2}\right).
\]
Hence integrating with respect to $z$ we have
\begin{equation}
\label{eq:teor2-7}
\frac{(q')^2}{q^2}=4\left( | a| ^2q-\frac{1}{q}\right) +4\l ,
\end{equation}
for certain $\l \in \R $. In particular, the right-hand side of (\ref{eq:teor2-7})
must be nonnegative. Solving for $q'(z)$ we have
\[
q'=\frac{dq}{dz}=2\sqrt{| a| ^2q^3-q+\l q^2};
\]
hence the height differential of $M$ is
\[
dz=\frac{1}{2}\frac{dq}{\sqrt{| a| ^2q^3-q+\l q^2}},
\]
where $q=r^2$. Viewing $q$ as a real variable, the third coordinate function of $M$
can be expressed as
\begin{equation}
\label{eq:teor2-8}
z(q)=\frac{1}{2}\int ^q\frac{du}{\sqrt{| a| ^2u^3-u+\l u^2}}.
\end{equation}

To obtain the first two coordinate functions of $M$, recall that $M\cap \{ x_3=z\} $ is a circle
centered at $(\a ,z)$ with radius $r$. This means that besides the variable $q$ that gives the height
$z(q)$, we need another  real variable to parameterize the circle centered at $(\a ,z)$ with radius $r$:
\[
x(q,v)=\a (z(q))+\sqrt{q}e^{iv}=-m(z(q))\, a+\sqrt{q}e^{iv}=-\int ^{z(q)}q(z)\, dz\cdot a+\sqrt{q}e^{iv}
\]
\[
=-\int ^{q}q\frac{dz}{dq}\, dq\cdot a+\sqrt{q}e^{iv}=-\frac{1}{2}\int ^q\frac{u \, du}
{\sqrt{| a| ^2u^3-u+\l u^2}}\cdot a
+\sqrt{q}e^{iv},
\]
where $0\leq v<2\pi $. In summary, we have obtained the following parameterization  $X=(x_1,x_2,z)=(x_1+ix_2,z)$ of
 $M$:
\begin{equation}
\label{eq:teor2-9}
X(q,v)=f(q)(a,0)+\sqrt{q}(e^{iv},0)+(0,z(q)),
\end{equation}
where ${\displaystyle
f(q)=-\frac{1}{2}\int ^q\frac{u \, du}{\sqrt{| a| ^2u^3-u+\l u^2}}}$
and $z(q)$ is given by  (\ref{eq:teor2-8}).

The surfaces in (\ref{eq:teor2-9}) come in a 3-parameter family depending on the
values of $a_1,a_2,\l $. Nevertheless, two of these parameters correspond to rotations and homotheties
of the same geometric surface, and so there is only one genuine geometric parameter.

Next we will analyze further the surfaces in the family (\ref{eq:teor2-9}), in order
to understand their shape and other properties. First observe that the first
term in the last expression of $X(q,v)$ parameterizes the center of the circle
$M\cap \{ x_3=z(q)\} $ as if it were placed at height zero,
the second term parameterizes the circle itself ($q$ is positive as $q(z)=r(z)^2$),
and the third one places the circle at height $z(q)$.
To study the shape of the surface, we will analyze for which values of $q>0$
the radicand $| a| ^2q^3-q+\l q^2$ is nonnegative (this is a necessary condition, see (\ref{eq:teor2-7})).
This indicates that the range of $q$ is of the form $[q_1,+\infty )$ for certain $q_1>0$. Also, the
positivity of the integrand in (\ref{eq:teor2-8}) implies that $z(q)$ is increasing. Since choosing a
starting value of $q$ for the integral (\ref{eq:teor2-8}) amounts to vertically translating the surface,
we can choose this starting value as $q_1$, which geometrically means:
\begin{quote}
{\it We normalize $M$ so that the circle of minimum radius in $M$ is at height zero,
and $M$ is a subset of the
half-space $\{ (x_1,x_2,x_3) \mid x_3\geq 0\} $.}
\end{quote}
This translated surface will have a lower boundary component being a circle
(or in the limit case  a straight line) contained in the plane
$\{ z=0\} $ (in particular, $M$ is not complete).
If we choose the negative branch of the square root
when solving for $q'(z)$
after (\ref{eq:teor2-7}), we will obtain another surface $M'$ contained in the half-space
$\{ x_3\leq 0\} $ with the same boundary as $M$ in
$\{ x_3=0\} $. The union of $M$ with $M'$ is again a smooth minimal surface;
this is because  the tangent spaces to both $M,M'$ coincide along
the common boundary. Nevertheless, $M\cup M'$ might fail to be complete;
we will obtain more information about this issue of completeness
when discussing the value of $a$.

Analogously, picking a starting value of $q$ for the integral in
(\ref{eq:teor2-9}) that gives $f$,
corresponds to translating horizontally the center of the circle
$M\cap \{ x_3=z(q)\} $ by a vector independent of  $q$,
or equivalently, translating $M$ horizontally in $\R^3$.
Thus, we can normalize this starting value of $q$ for the integral
in (\ref{eq:teor2-9}) to be the same $q_1$ as before. This means that
we may assume:
\begin{quote}
{\it The circle of minimum radius in $M$ has its center at the origin of $\R ^3$.}
\end{quote}

We next discuss cases depending on whether or not $a$ vanishes.
\par
\vspace{.2cm}
{\sc Case I: $a=(0,0)$ gives the catenoid.}
\par
\vspace{.2cm}
\noindent
In this case, (\ref{eq:teor2-9}) reads
$X(q,v)=\sqrt{q}(e^{iv},0)+(0,z(q))$, where $z(q)=\frac{1}{2}\int_{q_1}^q\frac{du}{\sqrt{-u+\l u^2}}du$.
In particular, $M$ is a surface of revolution around the $z$-axis, so $M$ is a half-catenoid with neck
circle at height zero. In order to determine $q_1$, observe that $\l $ is positive as follows from (\ref{eq:teor2-7})),
and that the function $u\mapsto -u+\l u^2$ is nonnegative in $(-\infty ,0]\cup [\frac{1}{\l },\infty )$.
Therefore,
 we must take $q_1=\frac{1}{\l }$. Furthermore, the integral that defines $z(q)$ can be explicitly solved:
\[
z(q)=\frac{1}{2}\int _{1/\l }^q\frac{du}{\sqrt{-u+\l u^2}}du=\frac{1} {\l}\arg \sinh \,
\sqrt{-1+q\l },
\]
which gives the following parameterization of $M$:
\begin{equation}
\label{eq:catenoide1}
X(q,v)=\left( \sqrt{q}e^{iv},\frac{1}{\l }\arg \sinh\,
\sqrt{-1+q\l }\right) .
\end{equation}
In this case, the surface $M'\subset \{ x_3\leq 0\} $
defined by the negative branch of the square root when solving for $q'(z)$ in
(\ref{eq:teor2-7}) is the lower half of the same catenoid of which $M$ is the upper part.
In particular, $M\cup M'$ is complete.
\par
\vspace{.2cm}
{\sc Case II: $a\neq (0,0)$ gives the Riemann minimal examples.}
\par
\vspace{.2cm}
\noindent
As $f,z$ depend on $| a| ^2$ but not on $a=(a_1,a_2)$, then a rotation of
$a\equiv a_1+ia_2$ in $\R^2 \equiv \C $ around the origin
by angle $\t $ will leave $f$ and $z$ invariant. By  (\ref{eq:teor2-9}),
the center of the circle $M\cap \{ x_3=z(q)\} $ will be
also rotated by the same angle $\t$ around the $x_3$-axis,
while the second and third terms in (\ref{eq:teor2-9}) will remain the same.
This says that rotating $a$ in $\C $ corresponds to rotating $M$ in $\R ^3$ around the $x_3$-axis,
so without lost
of generality we can assume that $a\equiv (a,0)\in \R^2$ with $a>0$.

The radicand of equation (\ref{eq:teor2-8}) is now expressed by $q(a^2q^2-1+\l q)$,
hence $a^2q^2-1+\l q\geq 0$. This occurs for
$q \in \left[ \frac{1}{2a^2}(-\l -\sqrt{4a^2+\l ^2}),0\right] \cup \left[ q_1,\infty \right)$,
where
\[
q_1=\frac{1}{2a^2}(-\l +\sqrt{4a^2+\l ^2}).
\]
As $q>0$, then the correct range is $q\in [q_1,\infty )$.
Now fix the starting integration values for $z(q),f(q)$ at $q_1$,
and denote by $z_{a,\l },f_{a,\l }$  the functions given by (\ref{eq:teor2-8}), (\ref{eq:teor2-9}),
respectively.
A straightforward change of variables shows that
\[
\sqrt{a}z_{a,\l }(q)=z_{1,\l /a}( aq) ,\qquad
a^{3/2}f_{a,\l }(q)=f_{1,\l /a}(aq).
\]
Thus, the minimal immersions $X_{a,\l },X_{1,\l /a}$ are related by a homothety of ratio $\sqrt{a}$:
\[
\sqrt{a}X_{a,\l }(q,v)=X_{1,\l /a}(aq,v).
\]
Therefore, we can assume $a=1$, i.e., our surfaces only depend on the real parameter $\l $:
\begin{equation}
\label{eq:Riemann1}
X_{\l }(q,v)=f_{\l }(q)(1,0)+\sqrt{q}(e^{iv},0)+(0,z_{\l }(q)),
\end{equation}
where
\[
f_{\l }(q)=-\frac{1}{2}\int _{q_1}^q\frac{u \, du}{\sqrt{u^3-u+\l u^2}}, \quad z_{\l
}(q)=\frac{1}{2}\int _{q_1}^q \frac{u \, du}{\sqrt{u^3-u+\l u^2}} \quad \mbox{and}
\]
\[
q_1=q_1(\l )=\frac{1}{2}\left( -\l +\sqrt{4+\l ^2}\right) .
\]
Calling $M_{\l }=X_{\l}([q_1,\infty )\times [0,2\pi ))$, the center of the circle $M_{\l }\cap \{
x_3=z_{\l }(q)\} $ lies en the plane $\Pi \equiv \{ x_2=0\} $, which implies that $M_{\l }$
is symmetric with respect to $\Pi $.

The increasing function $q\mapsto z_{\l }(q)$, $q\in [q_1,\infty )$, is bounded because
$\displaystyle\int ^q\frac{du}{\sqrt{u^3}}$ has limit zero as $q\rightarrow \infty $.
This means that $M_{\l }$ lies in
a slab of the form $$S(0,\zeta )=\{ (x_1,x_2,z) \in \R ^3 \ | \ 1\leq z\leq \zeta \}, $$ where
$\zeta =\zeta (\l )=\lim _{q\rightarrow \infty }z_{\l }(q)>0$.

We next analyze $M_{\l }\cap \{ x_3=\zeta \} $. As $M_{\l }$  is symmetric with respect to $\Pi $,
given $q\in [q_1,\infty [$, the circle $M_{\l }\cap \{ x_3=z(q)\} $ intersects $\Pi $
at two antipodal points
$A_+(q),A_-(q)$ that are obtained by imposing the condition $\sin v=0$ in (\ref{eq:Riemann1}), i.e.,
\[
A_{\pm }(q)=\left( -\frac{1}{2}\int _{q_1}^q\frac{u \, du}{\sqrt{u^3-u+\l u^2}}
\pm \sqrt{q},z_{\l }(q)\right) \in (\R \times \{ 0
\} )\times \R \subset \C \times \R .
\]
Since $q$ is not bounded, the first coordinate of  $A_-(q)$ tends to $-\infty $
as $q\to \infty $, that is to say, when we approach the upper boundary
plane $\{ x_3=\zeta \} $ of the slab $S(0,\zeta )$. On the contrary, the first
coordinate of  $A_+(q)$ tends to a finite limit as
$q\to \infty $, because for sufficiently large values of $q$ we have
\[
\left(
-\frac{1}{2}\int _{q_1}^q\frac{u \, du}{\sqrt{u^3-u+\l u^2}}+\sqrt{q}\right) \approx \left(
\mbox{constant}(\l )-\frac{1}{2}
\int _{q_1}^q\frac{du}{\sqrt{u}}+\sqrt{q}\right) =\mbox{constant}(\l ).
\]
Therefore, $A_+(q)$ converges as $q\to \infty $ to a point $A\in \{
x_3=\zeta \} \cap \Pi $. This proves that $M_{\l }\cap \{ x_3=\zeta \} \neq \mbox{\O }$.
As $M_{\l }\cap \{ x_3=\zeta \} $ cannot be compact (because $A_-(q)$ diverges in $\R ^3$ as $q\to \infty $),
we deduce that $M_{\l }\cap \{ x_3=\zeta \} $ is a noncompact limit
of circles symmetric with respect to $\Pi $, hence it is a straight
line $r$ orthogonal to $\Pi $ and  passing through $A$.

As the boundary of $M_{\l }$ consists of a circle in $\{ x_3=0\} $ and a  straight line
$r$ in $\{ x_3=\zeta \} $, then the Schwarz reflection
principle (Lemma~\ref{th:schwarz}) implies that $M\cup \mbox{Rot}_r(M)$ is a minimal surface,
where Rot$_r$ denotes the $180^\circ$-rotation
around $r$. Clearly, $M\cup \mbox{Rot}_r(M)\subset S(0,2 \, \zeta )$
has two horizontal boundary circles of the same radius.
The same behavior holds if we choose the negative branch of the square root
when solving (\ref{eq:teor2-7}) for $q'(z)$,
but now for a slab of the type $S(-\zeta ,0)$. This means that we can
rotate the surface by $180^\circ$ around the straight line
$r'= \{ -p\ | \ p\in r\} \subset \{ x_3=-\zeta \} $, obtaining a minimal surface
that lies in $S(-4\, \zeta ,2\, \zeta )$, whose boundary
consists of two circles of the same radius in the boundary planes of this slab
and such that the surface contains in its interior
three parallel straight lines at heights $\zeta$, $-\zeta $, $-3 \, \zeta$,
all orthogonal to $\Pi $.
Repeating this rotation-extension process
indefinitely we produce a complete embedded minimal surface $R_{\l }\subset \R ^3$
that contains parallel  lines contained in
the planes $\{ x_3=(2k+1)\zeta (\l )\} $ with $k\in \Z $ and orthogonal to $\Pi $.
It is also clear that $R_{\l }$ is invariant under the
translation $T_{\l}$ by the vector $2(A-B)$ (here $B=r'\cap \Pi $),
obtained by the composition of two $180^\circ$-rotations in consecutive lines.
This surface $R_{\l }$ is what we call a {\it Riemann minimal example,}
and it is foliated by circles and lines in parallel planes.

For each Riemann minimal example $R_{\l }$, the circles of minimum radius lie in the planes
$\{ x_3=2 \, k \, \zeta \,(\l )\} $, $k\in \Z $, and this minimum radius is
\[
\sqrt{q_1(\l )}=\sqrt{(-\l +\sqrt{4+\l ^2})/2}.
\]
This function of $\l \in \R$ is one-to-one (with negative derivative),
from where we conclude that
\begin{quote}
{\it
The Riemann minimal examples $\{ R_\l \ | \  \l \in \R \}$ form a 1-parameter
family of noncongruent surfaces.
}
\end{quote}
In Figure~\ref{fig:AB} we have represented the intersection of $R_{\l }$ for $\l =1$ with the
symmetry plane $\Pi $. At each of the points $A$ and $B$
there passes a straight line contained in the surface and orthogonal to $\Pi$.
\begin{figure}
\begin{center}
\includegraphics[height=5cm]{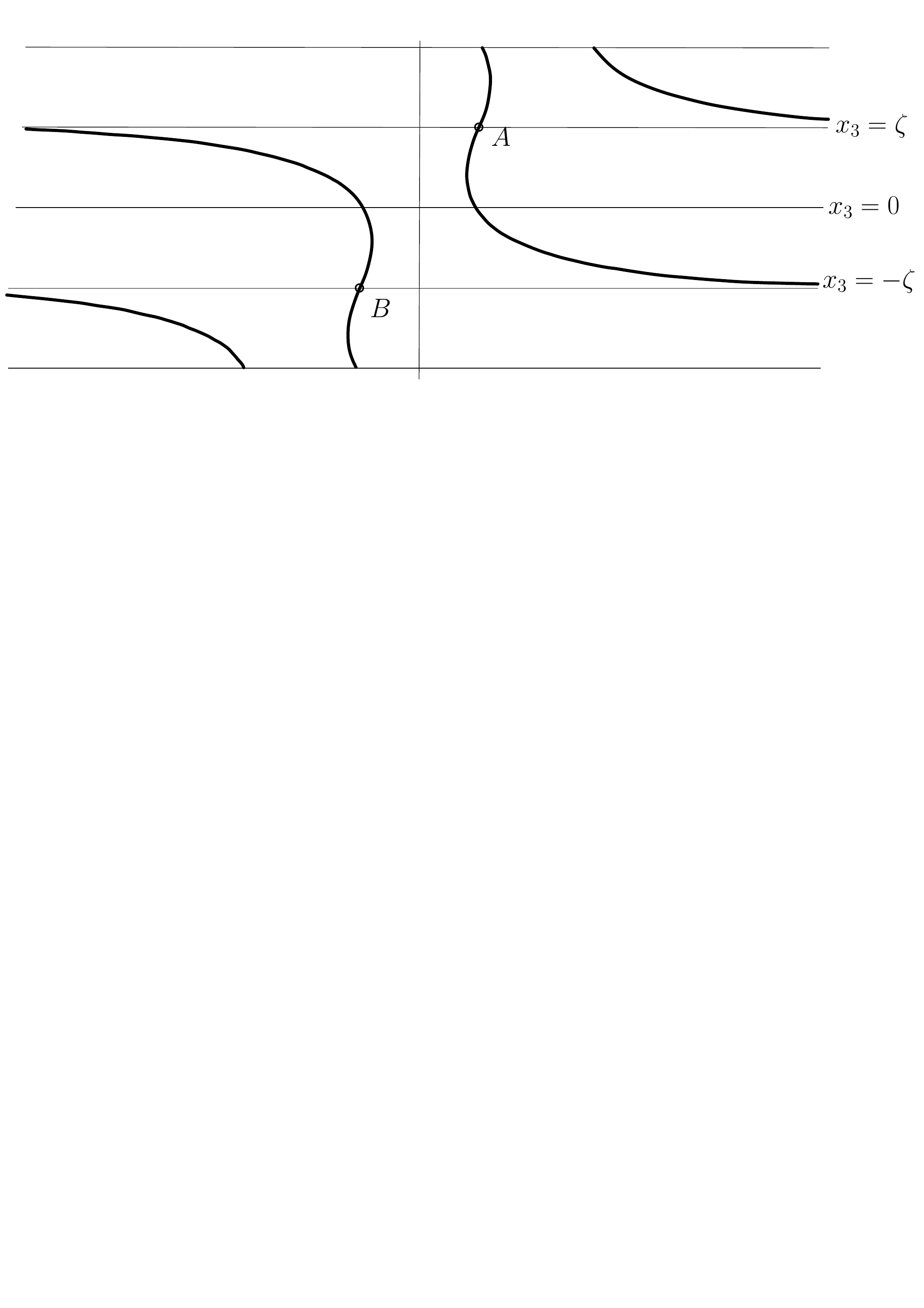}
\caption{The intersection of $R_\l$ with the symmetry plane $\Pi =\{x_2=0 \}$.
The translation of vector $2 (A-B)$ leaves $R_\l$ invariant.}
\label{fig:AB}
\end{center}
\end{figure}

Viewed as a complete surface in $\rth$, each Riemann minimal example $R_\lambda$ has the topology of an
infinite  cylinder punctured in an infinite discrete set of points, corresponding to its
planar ends, and $R_\lambda$ is invariant under a translation $T_\lambda$.
Furthermore, quotient surface
${\cal R}_\lambda=R_\lambda /T_\lambda$  in the
3-dimensional flat Riemannian manifold $\rth/T_\lambda$
is conformally diffeomorphic to a flat torus $\T$ punctured in two points.
In addition, the Gauss map $G$ of
${\cal R}_\lambda$ has degree two and has exactly two branch points. This means that
 $G$ is a meromorphic function on the punctured torus that extends to
a meromorphic function $\wh{G}$ of degree two on $\T$ whose zeros and poles of order two
occur at the two points corresponding to the planar ends of ${\cal R}_\lambda$ in $\rth/T_\lambda$.
It then follows from Riemann surface theory that the degree-two meromorphic
function $\wh{G}$ is a complex multiple of the
Weierstrass $\mathcal{P}$-function on the underlying elliptic curve $\T$.
Furthermore, the vertical  plane of symmetry and the rotational symmetry
around either of the two lines on the quotient surface ${\cal R}_\lambda$ imply that
$\T$ is conformally $\C/\Lambda$ where $\Lambda$ is a rectangular lattice.

\subsection{Graphics representation of the Riemann minimal examples.}
\label{secgraphics}

With the parameterizations of the Riemann minimal examples obtained in the
preceding section we can represent these surfaces with the help of the
 \textsf{Mathematica} graphing package. Nevertheless, we will not use the
parameterization in (\ref{eq:Riemann1}), mainly because the parameter $q$ diverges
when producing the straight lines contained in $R_{\l }$; instead, we will
use a conformal parameterization given by the Weierstrass representation.

Recall that ${\cal R}_\lambda=R_\lambda /T_\lambda$ is a twice punctured torus, and that its
Gauss map (which can be regarded as a holomorphic function on ${\cal R}_{\l }$, see
Section~\ref{subsec:weierstrass}) has degree two on the compactification.
In particular, the compactification of ${\cal R}_{\l }$ is conformally equivalent to
the following algebraic curve:
\[
\overline{M}_\sigma=\left\{ (z,w) \in \overline{\C} \times \overline{\C} \ | \
w^2 = z(z-1) (z+\sigma) \right\},
\]
where $\sigma \in \C-\{0,1\}$ depends on $\l $ in some way to be determined later, and
we can moreover choose the degree-two function $z$ on $\overline{M}_\sigma$ so that
meromorphic extension of the Gauss map of ${\cal R}_\lambda $ to $\overline{M}_\sigma$ is
$g^{\sigma}(z,w)= \rho \, z$, for a certain constant $\rho=\rho(\sigma) \in \C^*$. It is worth
mentioning that the way one endows $\overline{M}_\sigma$ with a complex structure is also due
to Riemann, when studying multivalued functions on the complex plane (in this case,
$z \mapsto \sqrt{z (z-1) (z+\sigma)}$).

Since the third coordinate function of $R_\lambda $ is harmonic and extends
analytically through the planar ends
(because it is bounded in a neighborhood of each end), its complex differential
$dx_3+i\, dx_3^\ast =\phi _3^{\sigma}$
is a holomorphic 1-form on the torus $\overline{M}_\sigma$ (without zeros).
As the linear space of
holomorphic 1-forms on a torus has complex dimension~1, then we deduce that
$\phi_3^{\sigma} \equiv \mu \frac{dz}{w}$
 for some $\mu  \in \C^*$ also to be determined. Clearly, after possibly
 applying a homothety to $R_{\l }$, we can assume that
$|\mu |=1$.

\subsubsection{Symmetries of the surface.}
\label{sec4.1.1}
Recall that each surface $R_\lambda$ is invariant under certain
rigid motions of $\R^3$, which therefore induce intrinsic isometries of
the surface. These intrinsic isometries are in particular conformal
diffeomorphisms of the algebraic curve $\overline{M}_{\sigma }$, that
might be holomorphic or anti-holomorphic depending on whether or not
they preserve or invert the orientation of the surface.
These symmetries will be useful in determining the constants that appeared in the above two paragraphs.

First consider the orientation-preserving isometry of $R_{\l }$ given by $180^\circ$-rotation
about a straight line parallel to the $x_2$-axis,
  that intersects $R_{\l }$ orthogonally at two points lying in a horizontal circle of minimum
  radius (these points would be represented
  by the mid point of the segment $\overline{AB}$ in Figure~\ref{fig:AB}).
  This symmetry induces an order-two biholomorphism
$S_1$ of $\overline{M}_\sigma$, that acts on $g^{\sigma},\phi_3^{\sigma}$ in the following way:
\[
g^{\sigma} \circ S_1=-1/g^{\sigma},\qquad
S_1^*\phi_3^{\sigma}=-\phi_3^{\sigma}.
\]
As $S_1$ interchanges the branch values of $g$, we deduce that $\rho= \frac{1}{\sqrt{\sigma}}$
and $S_1(z,w)=(-\frac{\sigma}{z},-\sigma \frac{w}{z^2})$.

Another isometry of $R_\l$ is the $180^\circ$-rotation about a straight line $r$ parallel to the
$x_2$-axis and contained in the surface.
As this symmetry reverses orientation of $R_\l$, then it induces an order-two anti-holomorphic diffeomorphism
$S_2$ of $\overline{M}_\sigma$, that acts on $g^{\sigma},\phi_3^{\sigma}$ as
\[
g^{\sigma} \circ S_2= \overline{g^{\sigma}}\qquad S_2^*\phi_3^{\sigma} = -\overline{\phi_3^{\sigma}}.
\]
$S_2$ fixes the branch points of $g^\sigma$ (one of them lies in $r$),
from where we get $\sigma \in \R$, $\mu  \in \{\pm 1, \pm i \}$ and
$S_2(z,w)=(\overline{z},\pm \overline{w})$. Furthermore, the unit normal vector to $R_{\l }$
along $r$ takes values in
$\esf^1(1)\cap \{x_2=0 \}$, which implies that $g^\sigma(1,0)\in \R$, and so, $\sigma>0$.

The following argument shows that we can assume that $\mu =1$. Consider the Weierstrass data $\left(\overline{M}_\sigma, \; g^{\sigma}, \; \phi_3^{\sigma}\right)$,
and the biholomorphism $\Sigma \colon \overline{M}_{1/\sigma} \to \overline{M}_{\sigma}$ given by
$\Sigma(z,w)=(-\sigma \, z, i \, \sigma^{3/2}\, w)$. Then, we have that
\[
g^{\sigma} \circ \Sigma= -g^{1/\sigma}, \quad \Sigma^* \phi_3^{\sigma}=
\frac{i}{\sqrt{\sigma}} \phi_3^{1/\sigma}.
\]
The change of variable via $\Sigma $ gives
\[
\int_{\Sigma(P_0)}^{\Sigma(P)} \frac{i}{\sqrt{\sigma}} \phi_3^{1/\sigma}=\int_{P_0}^{P}  \phi_3^{\sigma},
\]
and similar equations hold for the other two components $\phi_1^{1/\sigma}, \phi_2^{1/\sigma}$ of
the Weierstrass form (see~(\ref{eq:phij})).
This implies that we can assume after a rigid motion and homothety, that $\mu =1$ and that the isometry $S_2$ is
\[
S_2(z,w)=(\overline{z},-\overline{w}).
\]
Finally, the reflective symmetry of $R_{\l }$ with respect to the plane $\{x_2=0 \}$ induces an anti-holomorphic involution
$S_3$ of $\overline{M}_{\sigma }$ which fixes the branch points of $g^{\sigma }$ (including the zeros
and poles $(0,0),(\infty ,\infty )$ of $g$, which
correspond to the ends of ${\cal R}_{\l }$), and that preserves the third coordinate function. It is
then clear that this transformation has the form $S_3(z,w)=(\overline{z},\overline{w})$.

\subsubsection{The period problem.}
\label{sec4.1.2}
We next check that the Weierstrass data $\left(\overline{M}_\sigma, \; g^{\sigma}, \; \phi_3^{\sigma}\right)$
produces a minimal surface in $\R^3$, invariant under a translation vector.
A homology basis of the algebraic curve
$\overline{M}_\sigma$ is ${\cal B}=\{\gamma_1,\gamma_2\}$, where these closed curves are the liftings to $\overline{M}_\sigma$
through the $z$-projection of the curves in the complex plane represented in Figure~\ref{fig:curvas}.
\begin{figure}
\begin{center}
\includegraphics[width=0.5\textwidth]{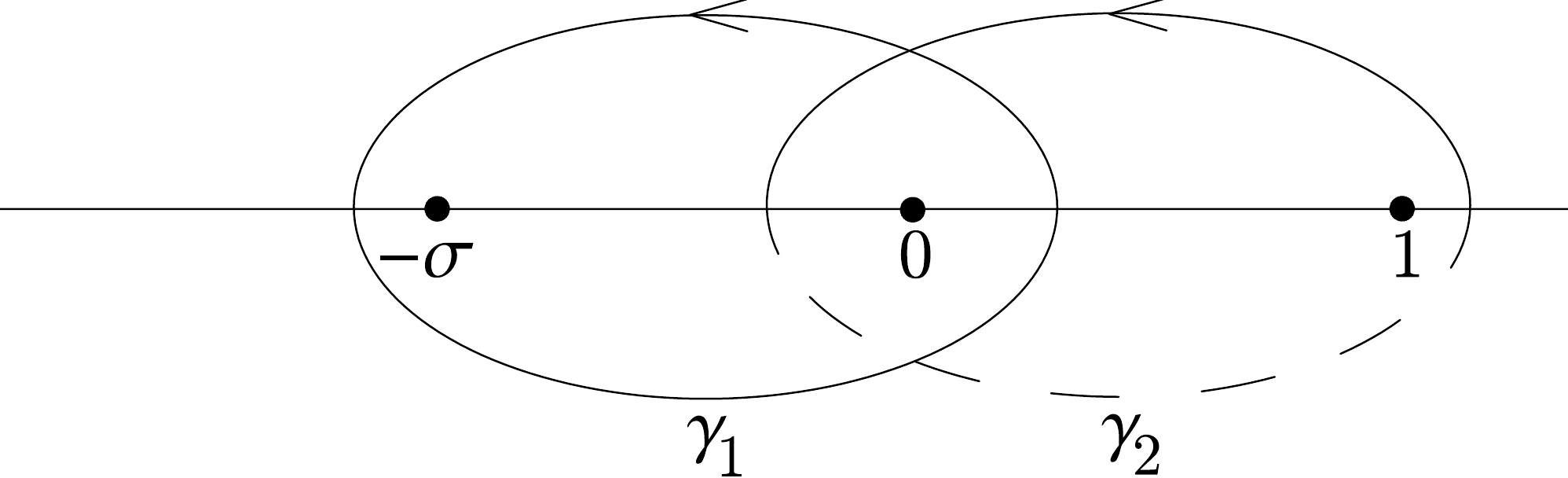}
\caption{The curves $\gamma_1$ and $\gamma_2$ in the $z$-complex plane.}
\label{fig:curvas}
\end{center}
\end{figure}

The action of the symmetries $S_j$, $j=1,2,3$, on the basis $\cal B$ and on the Weierstrass data implies that
\[
\mbox{Re}  \int_{\gamma_1} (\phi_1^{\sigma},\phi_2^{\sigma},\phi_3^{\sigma}) =0, \qquad \mbox{Re} \int_{\gamma_2} (\phi_1^{\sigma},\phi_2^{\sigma},\phi_3^{\sigma}) \in \{x_2=0 \}.
\]
In particular, the Weierstrass data $\left(\overline{M}_\sigma, \; g^{\sigma}, \; \phi_3^{\sigma}\right)$
gives rise to a well-defined
minimal immersion on the cyclic covering of $\overline{M}_\sigma -\{ (0,0),(\infty ,\infty )\} $ associated
to the subgroup of the fundamental group
 of $\overline{M}_\sigma$ generated by $\gamma_2$. We will call $M_{\sigma }\subset \R ^3$ to the image of
 this immersion; we will
 prove in Section~\ref{subsec:shiffman} that $M_{\sigma }$ is one of the Riemann minimal examples
 $R_{\l }$ obtained in Section~\ref{riemann}.

For the moment, we will content ourselves with finding a simply connected domain of $M_{\sigma }$
bordered by symmetry lines (planar geodesics or straight lines).
The reason for this is that the package \textsf{Mathematica} works with parameterizations defined
in domains of the plane, which once represented graphically,
can be replicated in space by means of the Schwarz reflection principles; we will call such a
simply connected domain of $M_{\sigma }$ a {\it fundamental piece.}
Having in mind how the isometries $S_2$ and $S_3$ act on the $z$-complex plane, it is clear that
we can reduce ourselves to representing graphically
the domain of $M_{\sigma }$ that corresponds to moving $z$ in the upper half-plane. Using the symmetry
$S_1$ we can reduce even further this domain, to
the set $ \left\{ z \in \C \ | \ \left| z-\frac{1-\sigma}{2} \right| \leq \frac{1+\sigma}{2}, \;
\mbox{Im} (z) \geq 0 \right\} $. As the point $(z,w)=(0,0)$
corresponds to an end of the minimal surface $M_{\sigma }$,
we will remove a small neighborhood of in the $z$-plane centered at the origin. In this way we
get a planar domain $\Omega_\sigma$ as in Figure~\ref{fig:omega}.
\begin{figure}
\begin{center}
\includegraphics[width=0.5\textwidth]{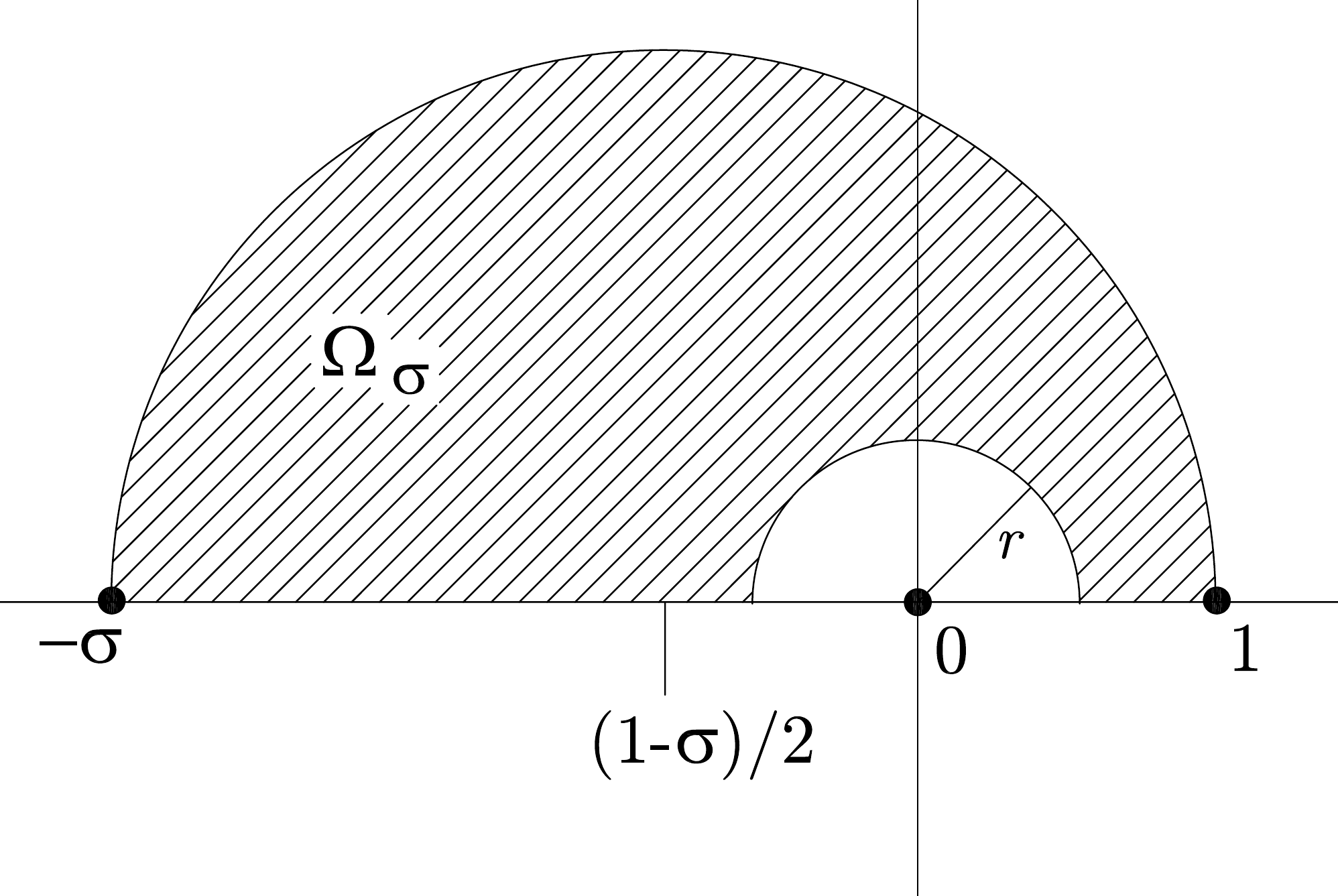}
\caption{The domain $\Omega_\sigma$.}
\label{fig:omega}
\end{center}
\end{figure}

The above arguments lead to the conformal parameterization
$X^{\sigma}=(x_1^{\sigma},x_2^{\sigma},x_3^{\sigma})\colon \Omega_\sigma  \to  \R ^3$ given by
 \begin{eqnarray*}
x_1^{\sigma}(z) & = & \mbox{Re} \left( \int_1^z \frac 12
\left( \frac{\sqrt{\sigma}}{u}-\frac{u}{\sqrt{\sigma}} \right) \frac{d \, u}{\sqrt{u (u-1) (u+\sigma)}} \right) ,\\
x_2^{\sigma}(z) & = & \mbox{Re} \left( \int_1^z \frac{i}2
\left( \frac{\sqrt{\sigma}}{u}+\frac{u}{\sqrt{\sigma}} \right)
\frac{d \, u}{\sqrt{u (u-1) (u+\sigma)}} \right) ,\\
x_3^{\sigma}(z) & = & \mbox{Re} \left( \int_1^z  \frac{d \, u}
{\sqrt{u (u-1) (u+\sigma)}} \right).
\end{eqnarray*}
The following properties are easy to check from the symmetries of the Weierstrass data:
\begin{enumerate}[(P1)]
\item The image through $X^{\sigma}$ of the boundary segment $[-\sigma,0]\cap \partial \Omega_\sigma $
corresponds to a planar geodesic of reflective symmetry of $M_{\sigma }$,
contained in the plane $\{x_2=0\}$.
\item The image through $X^{\sigma}$ of $[0,1]\cap \partial \Omega_\sigma$
is a straight line segment contained in the
surface and parallel to the $x_2$-axis.
\item The image through $X^{\sigma}$ of the outer half-circle in $\partial \Omega _{\sigma }$ is
a curve in $M_{\sigma }$ which is invariant
under the $180^\circ$-rotation around a straight line parallel to the $x_2$-axis, that passes
through the fixed point $X^{\sigma}(i\sqrt{\sigma})$ of $S_1$.
\end{enumerate}
\begin{figure}
\begin{center}
\includegraphics[width=7.5cm]{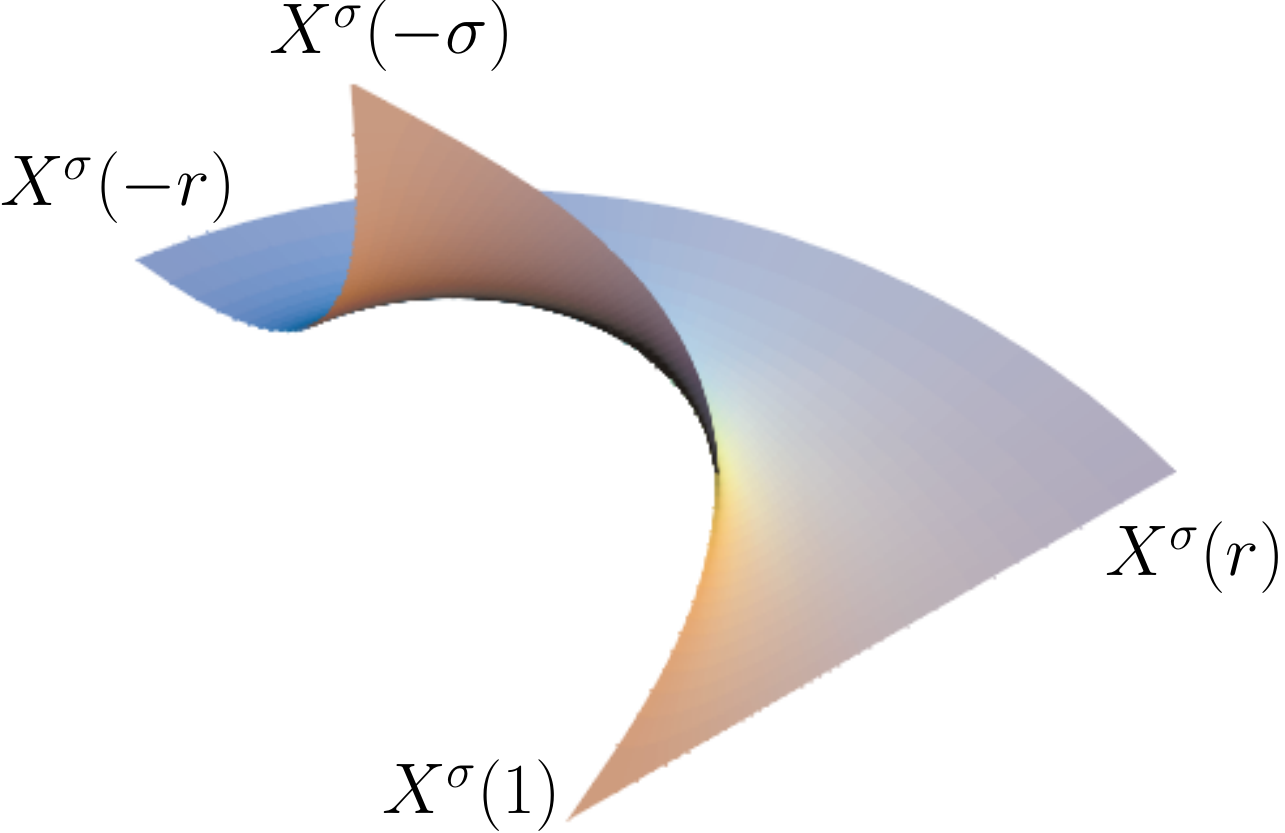}
\caption{The fundamental piece for $\sigma=1$.}
\label{fig:pieza-1}
\end{center}
\end{figure}
After reflecting the fundamental piece $X^{\sigma }(\Omega _{\sigma })$ across its boundary,
we will obtain the complete minimal surface
$M_{\sigma }\subset \R ^3$.

\subsection{Relationships between $M_\sigma$ and $R_\l$: the Shiffman function.}
\label{subsec:shiffman}

Each of the minimal surfaces $M_\sigma\subset \R^3$, $\sigma >0$, constructed
in the last section is topologically
a cylinder punctured in an infinite discrete set of points, and $M_{\sigma }$
has no points with horizontal tangent plane.
For a minimal surface with these conditions, M. Shiffman
introduced in 1956 a function that expresses the variation of the curvature of
planar sections of the surface. More precisely,
around a point $p$ of a minimal surface $M$ with nonhorizontal tangent plane,
one can always pick a local conformal coordinate
$\xi=x+i\, y$ so that the height differential is $\phi_3=d \, \xi$.
The horizontal level curves $\{ x_3=c\} \cap M$
near $p$ are then parameterized as $\xi_c(y)=c+i\, y$. If we call $\kappa_c$
to the curvature of this level section as a planar curve,
then one can check that
\[
\kappa_c (y)=\left[ \frac{|g|}{1+|g|^2} \mbox{ Re}
\left( \frac{g'}{g} \right) \right] |_ {\xi=\xi_c(y)},
\]
where $g$ is the Gauss map of $M$ in this local coordinate, see Section~\ref{subsec:weierstrass}.
The Shiffman function is defined as
\[
S_M=\Lambda \frac{\partial \kappa_c}{\partial y},
\]
where $\Lambda >0$ is the smooth function so that the induced metric on $M$ is
$ds^2=\Lambda^2 |d \xi |^2$. In particular,
the vanishing of the Shiffman function is equivalent to the fact that $M$ is
foliated by arcs of circles or straight lines
in horizontal planes. Therefore, a way to show that the surface $M_{\sigma }$
defined in the last section coincides with one
of the Riemann minimal examples $R_{\l }$ is to verify that its Shiffman function
vanishes identically.

A direct computation shows that
\begin{equation}
\label{eq:Shiffman}
S_M=\mbox{Im} \left[ \frac{3}{2} \left( \frac{g'}{g} \right)^2 -
\frac{g''}{g} - \frac{1}{1+|g|^2}
 \left( \frac{g'}{g} \right)^2 \right],
 \end{equation}
For the surface $M_{\sigma }$, we have $g(z,w)=g^\sigma(z,w)\tfrac{z}{\sqrt{\sigma}}$
and $d \xi=\phi_3^\sigma=\frac{dz}{w}= \sqrt{\sigma} \frac{g' \, d \xi}{w}$;
hence $w=\sqrt{\sigma }g'$ and thus $ {{(g')^2}=   {{g\,\left( \sqrt{\sigma}  + g \right) \,
\left( -1 + \sqrt{\sigma} \,g \right) }}}$. Taking derivatives of this expression we obtain
${{g''}=
   {\frac{-\sqrt{\sigma} }{2} +
     \left( -1 + {\sqrt{\sigma} }^2 \right) \,g +
     \frac{3\,\sqrt{\sigma} \,{g}^2}{2}}} .  $ Plugging this in (\ref{eq:Shiffman}) we get
\[
S_M=\mbox{Im} \left[\frac{\left( \sigma -1 \right) \,
     \left( {|g|}^2-1 \right)  -
    4\,\sqrt{\sigma} \,\mbox{Re}(g)}{2\,
    \left( 1 + {|g|}^2 \right) } \right]=0,
\]
which implies that $M_\sigma$ is one of the Riemann minimal examples $R_\l$, but which one?

We must look for an expression of $\sigma >0$ in terms of $\l \in \R $ (or vice versa),
so that the surfaces $R_\l$ and $M_\sigma$ are congruent.
Since $g^\sigma(1,0)=1/\sqrt{\sigma},$ $g^\sigma(-\sigma,0)=-\sqrt{\sigma}$, and we know that $X^\sigma(1,0)$, $X^\sigma(-\sigma,0)$
are points where straight lines contained in $M_{\sigma }$ intersect the vertical plane of symmetry,
the values of the stereographically
projected Gauss map of the surface at these points will help us to find $\sigma(\l )$.
Recall that with the parameterization
$X_\l(q,v)$ of $R_{\l }$ given in equation
(\ref{eq:Riemann1}), these points are given by taking $v=0$ and $q\to \infty$.
Hence we must compute the limit
as $q\to \infty $ of $\frac{N_1(q,0)}{1-N_3(q,0)}$, where $N=(N_1,N_2,N_3)$
is the Gauss map associated to $X_\l$. This limit is
$\frac{2}{\l -\sqrt{\l^2+4}}<0$, so we must impose it to coincide with $-\sqrt{\sigma}$. In other words,
\[
\sigma=\left(\frac{2}{\sqrt{\l^2+4}-\l} \right)^2.
\]


\subsubsection{Parameterizing the surface with {\sf Mathematica}.}
\label{subsec:parametrizando}
When using \textsf{Mathematica}, we must keep in mind that the germs of multivalued functions
that appear in the integration of the Weierstrass representation are necessarily continuous.
These choices of branches do not always coincide with the choices made by the program, but we do not want
to bother the reader with these technicalities. Thus, we directly write the three coordinate functions of the parameterization
(already integrated) as follows:
\[
\begin{array}{lll}
\tt
x1[\sigma_{-}][z_{-}]&:=&\frac{1}{\sqrt{\sigma}} \left(
\left( \sqrt{\frac{-1+z}z} \sqrt{z+\sigma}+ \right. \right. \\
& & \tt \frac 1{\sqrt{1+\sigma}} \left(2 \left( (-1-\sigma) EllipticE \left[ArcSin\left[\sqrt{1+\frac z{\sigma}}\right],\frac{\sigma}{1+\sigma}\right]+ \right. \right.\\
& & \tt \left. \left. \left. \left. EllipticF\left[ArcSin\left[\sqrt{1+\frac z{\sigma}}\right],\frac{\sigma}{1+\sigma}\right]  \right) \right)\right) \right)
\\
\rule{0cm}{.6cm}
\tt
x2[\sigma_{-}][z_{-}]&:=& - {\sqrt{\frac{-1 + z}{-\sigma \,z}}}\,{\sqrt{\sigma + z}} \\
\rule{0cm}{.6cm}
\tt
x3[\sigma_{-}][z_{-}]&:=&\tt \frac{-2}{{\sqrt{\sigma}}} EllipticF \left[ArcSin \left[\frac{{\sqrt{\sigma}}}{{\sqrt{\sigma + z}}}\right],\frac{1 + \sigma}{\sigma}\right]
\end{array}
\]
We will translate the surface so that the point $v_0^\sigma=X^\sigma(1)$ equals the origin, defining
the parameterization $\psi^\sigma(z)=X^\sigma(z)-v_0^\sigma$:
\[
\begin{array}{lll}
\tt
 v0[\sigma_{-}]&:=& \tt \left\{ -2\,Im \left[\frac{1}{\sqrt{-\left( 1 +\sigma \right) \,\sigma}} \left(\left( -1  -\sigma \right) \, EllipticE \left[ ArcSin \left[\sqrt{\frac{1 + \sigma}{\sigma}} \right] ,\frac{\sigma}{1 + \sigma}\right] + \right. \right. \right. \\
& & \tt \left. \left. EllipticF \left[ ArcSin \left[\sqrt{\frac{1 + \sigma}{\sigma}} \right],\frac{\sigma}{1 + \sigma}\right] \right)\right],0,  \\
\tt
& &
\tt \left. -2\,{Re}\left[\frac{1}{\sqrt{\sigma}}
EllipticF \left[ ArcSin \left[{\sqrt{\frac{\sigma}{1 + \sigma}}}\right],\frac{1 + {\sigma}}{{\sigma}}\right]
\right] \right\}
\\
\rule{0cm}{.6cm}
\tt \psi[\sigma_{-}][z_{-}]&:=& \tt Re[\{x1[\sigma][z],x2[\sigma][z],x3[\sigma][z]\}]-v0[\sigma]
\end{array}
\]

In order to simplify our parameterization, we will use a M\"{o}bius transformation that maps the half-annulus
$\{ e \leq |z| \leq 1, \; \mbox{Im}(z) \geq 0 \} $, for certain $e\in (0,1)$,
into the domain region $\Omega_\sigma$;
we will also use polar coordinates in the half-annulus:
\[
\begin{array}{lll}
\tt
f[z_{-}, a_{-},e_{-}] &:=& \tt
(-a^2 (1 + e^2) (-1 + z) -
          2 a (-3 + e^2) z - (1 + e^2) (1 +
                z) + \\
& & \tt Sqrt[4 (-1 + a)^2 e^2 + (1 + a)^2 (-1 + e^2)^2] (1 + a (-1 + z) + z)) \\
& & \tt (2 \, Sqrt[4 (-1 + a)^2 e^2 + (1 + a)^2 (-1 + e^2)^2] \\
& & \tt -2 ((1 + a) (-1 + e^2) - 2 (-1 + a) z))
\end{array}
\]
The graphics representation of the fundamental piece $\Omega _{\sigma }$
through the immersion $\psi ^{\sigma }$ is given by the following expression; observe
that we leave as variables the parameter $e\in (0,1)$, which measures how much of the
end of $\Omega _{\sigma }$ is represented (the smaller the value of $e$, the larger size of
the represented end) and the parameter $\sigma $ of the minimal Riemann example.
\[
\begin{array}{lll}
\tt r1[\sigma_{-}][e_{-}] &:=& \tt  ParametricPlot3D[
      \psi[\sigma][f[r Exp[I \;Pi \;t], \sigma, e]], \{r, e, 1 \}, \{t, 0, 1\}, \\
& &  \tt    PlotPoints -> \{40, 60 \}, Axes -> False];
\end{array}
\]

We are now ready to render the fundamental piece that appears in
Figure~\ref{fig:pieza-1}, which corresponds to execute the command
$\tt r1[2][0.1]$ (i.e., $\sigma =2$, $e=0.1$). We type:
\[
\tt p1=r1[2][0.1]
\]
\par
\vspace{-.4cm}
\begin{center}
\includegraphics[width=0.3\textwidth]{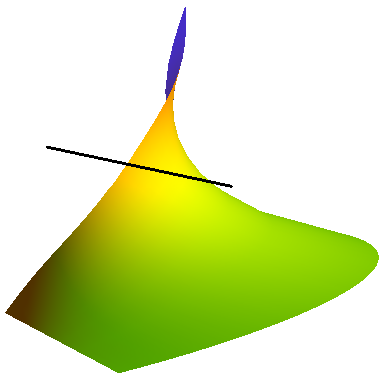}
\end{center}
In the last figure, we have also represented a straight line parallel to the $x_2$-axis,
that intersects the surface orthogonally at a point in the boundary of the fundamental piece;
the command to render both graphics objects simultaneously is
\[
\tt p2=Show[p1,line]
\]
where
\[
\begin{array}{lll}
\tt line&=& \tt ParametricPlot3D[\psi [2][Sqrt[2]I]+t\{ 0,1,0\} ,\{ t,-2,2\} ,\\
& & \tt Axes->False,\ Boxed->False,\ \\
& & \tt PlotRange->All,\ PlotStyle->Thickness[0.005]];
\end{array}
\]
produces the line $\psi ^{\sigma }(\sqrt{2}i)+\mbox{Span}(0,1,0)$. Next
we will extend the fundamental piece by $180^\circ$-rotation around this line, which induces
the holomorphic involution $S_1$ explained in Section~\ref{sec4.1.1}. To define this transformation
of the graphic, we will use the command ${\tt GeometricTransformation[x,\{ m,w\} ]}$ that
applies to a graphics object {\tt x} the affine transformation $x\mapsto mx+w$
(here $m$ is a real $3\times 3$ matrix and
$w\in \R^3$ a translation vector). In our case,
\[
m=\left( \begin{array}{ccc}
-1 & 0 & 0 \\
0 & 1 & 0 \\
0 & 0 & -1\end{array}\right) ,\qquad w=(2c_1,0,2c_3),
\]
where $c=\psi ^{\sigma}(\sqrt{2}i)$. We type:
\[
\begin{array}{lll}
\tt p3&=& \tt Graphics3D[GeometricTransformation[p1[[1]], \\
& & \tt \{ \{ \{ -1,0,0\} ,\{ 0,1,0\} ,\{ 0,0,-1\} \} ,\{ 2c1,0,2c3\} \} ]];
\end{array}
\]
after having defined {\tt c1} and {\tt c3} as the first and third coordinates
of $\psi ^{\sigma }(\sqrt{2}i)$. In order to render the pieces {\tt p1} and {\tt p3} at the
same time, as well as the $180^\circ$-rotation axis, we type:
\[
\tt p4=Show[p1,p3,line]
\]
\begin{center}
\includegraphics[width=0.4\textwidth]{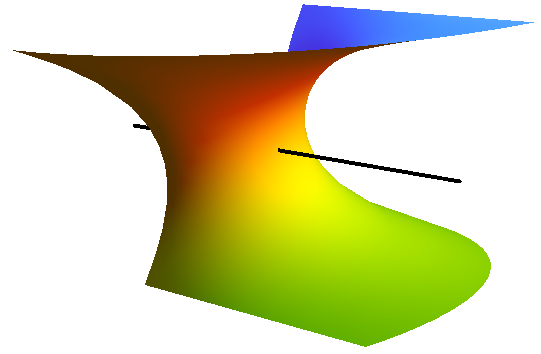}
\end{center}

The next step consists of reflecting the last figure in the plane $\{ x_2=0\} $ (which is the plane orthogonal
to the line segments contained in the boundary of the last piece and to the orientation-preserving
$180^\circ$-rotation axis).
\[
\begin{array}{lll}
\tt p5&=& \tt Graphics3D[GeometricTransformation[p4[[1]], \\
& & \tt \{ \{ \{ 1,0,0\} ,\{ 0,-1,0\} ,\{ 0,0,1\} \} \} ]]; \\
\tt p6&=& \tt Show[p4,p5,line]
\end{array}
\]
\begin{center}
\includegraphics[width=0.4\textwidth]{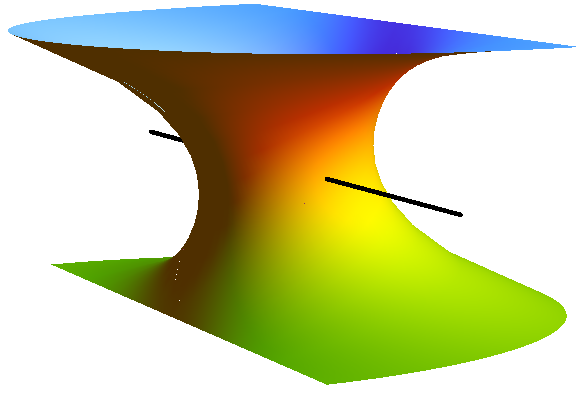}
\end{center}
Next we rotate the last piece by angle $180^\circ$ around the $x_2$-axis, which is contained
in the boundary of the surface.
\[
\begin{array}{lll}
\tt p7&=& \tt Graphics3D[GeometricTransformation[p6[[1]], \\
& & \tt \{ \{ \{ -1,0,0\} ,\{ 0,1,0\} ,\{ 0,0,-1\} \} \} ]]; \\
\tt p8&=& \tt Show[p6,p7]
\end{array}
\]
\begin{center}
\includegraphics[width=0.55\textwidth]{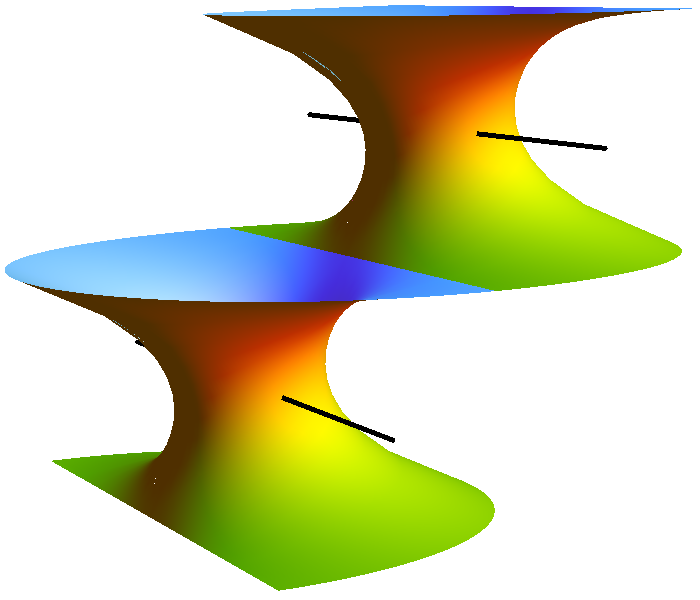}
\end{center}
$\tt p8$ represents a fundamental domain of the Riemann minimal example.
The whole surface can be now obtained by translating the graphics domain $\tt p8$ by
multiples of the vector $2 \, t_0= 2\psi^{\sigma }(-\sigma)$. We define this translation vector $t_0$:
\[
\tt t0=\psi[2][-1.99999]
\]

The reason why we have evaluated at a point close to $-2$ on the right, is due to the
aforementioned fact that we must use a continuous branch
of the elliptic functions that appear in the expression of $\psi$. The numeric error that
we are making is insignificant with a normal
screen resolution. After this translation, we type:
\[
\begin{array}{lll}
\tt p9&=& \tt Graphics3D[GeometricTransformation[p8[[1]], \\
& & \tt \{ \{ \{ 1,0,0\} ,\{ 0,1,0\} ,\{ 0,0,1\} \} ,2\ t0\} ]]; \\
\tt p10&=& \tt Show[p8,p9]
\end{array}
\]
\begin{center}
\includegraphics[width=0.5\textwidth]{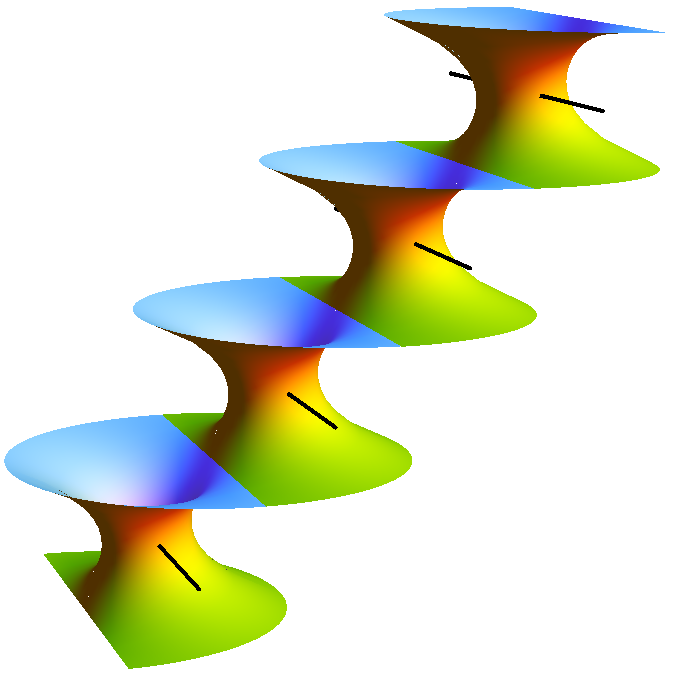}
\end{center}

It is desirable to have a better understanding of the surface ``at infinity''.
This can done by taking a smaller value of the parameter {\tt e} y repeating the whole
process. Figure~\ref{R-image} represents the final stage {\tt p10} in the case {\tt e=0.02}.
\begin{figure}[H]
\begin{center}
\includegraphics[width=0.6\textwidth]{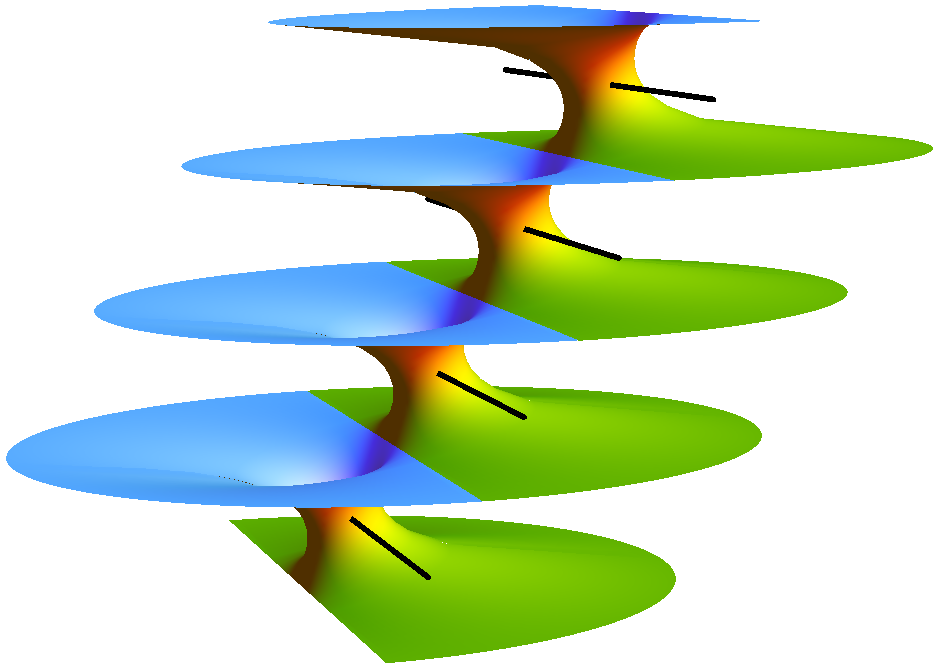}
\end{center}
\caption{A image  of one of the Riemann minimal examples.}
\label{R-image}
\end{figure}
Figure~\ref{R-image} indicates that the surface becomes asymptotic to an infinite family of parallel
(actually horizontal) planes, equally spaced.
This justifies the wording {\em planar ends.}
\begin{remark}
{\rm
For very large or very small values of the parameter $\sigma $,
one can find imperfections in the graphics, especially around
$\psi^\sigma(1)$ or $\psi^\sigma(-\sigma)$.
This is due to the fact that these values of $\sigma $ produce a non-homogeneous
distribution of points in the mesh that the program computes when rendering the figure.
This issue can be solved by
substituting $\tt Exp[I \;Pi \;t]$ by $\tt Exp[I \;Pi \;t^{n}]$
in the definition of $\tt r1[\sigma][z]$, with $\tt n$ large if
$\sigma$ is close to zero $0$, or with $\tt n$ close to zero if $\sigma$ is large.
}
\end{remark}

\section{Uniqueness of the properly embedded minimal planar domains in $\R^3$.}
\label{MPR}

As mentioned in Section~\ref{riemann}, each Riemann minimal example $R_{\l }$
is a complete (in fact, proper)
embedded minimal surface  in $\R^3$ with the topology of a cylinder punctured
in an infinite discrete set of points,
which is invariant under a translation. If we view this cylinder as a twice punctured
sphere, then one deduces that $R_{\l }$ is topologically (even conformally) equivalent to a
sphere minus an infinite set of points that accumulate only at distinct two points, the so called limit
ends of $R_{\l }$. In particular, $R_{\l }$ is a planar domain. One longstanding open problem
in minimal surface theory has been the following one:
\begin{quote}
{\it
Problem: Classify all properly embedded minimal planar domains in $\R^3$.
}
\end{quote}
Up to scaling and rigid motions, the family ${\cal P}$ of all properly embedded planar domains in $\R^3$
comprises the plane, the catenoid, the helicoid and the 1-parameter family of Riemann minimal
examples. The proof  that this list is complete is a joint effort described in a series of papers by
different researchers. In this section we will give an outline of this classification.

\subsection{The case of finitely many ends, more than one.}
\label{sec5.1}
One starts by considering a surface $M\in {\cal P}$ with $k$ ends, $2\leq k<\infty $.
Even without
assuming genus zero, such surfaces of finite genus were proven to have finite total curvature
by Collin~\cite{col1}, a case in which
the asymptotic geometry is particularly well-understood: the finitely many ends
are asymptotic to planes or
half-catenoids, the conformal structure of the surface is the one of a compact Riemann
surface minus a finite
number of points, and the Gauss map of the surface extends meromorphically
to this compactification
(Huber~\cite{hu1}, Osserman~\cite{os1}). The asymptotic behavior of the ends
and the embeddedness of $M$ forces
the values of the extended Gauss map at the ends to be contained in a set of
two antipodal points on the sphere.
After possibly a rotation in $\R^3$, these values of the extension of the
Gauss map of $M$ at the ends can be assumed to be $(0,0,\pm 1)$.
L\'opez and Ros~\cite{lor1} found the following
argument to conclude that $M$ is either a plane or a catenoid.
The genus zero assumption  plays a
crucial role in the well-posedness of the deformation $\l >0\mapsto M_{\l }$ of $M$
by minimal surfaces $M_{\l }
\subset \R^3$ with the same conformal structure and the same height differential
as $M$ but with the meromorphic Gauss map
scaled by $\l\in [1,\infty)$. A clever application of the maximum
principle for minimal surfaces along this deformation implies that all surfaces
$M_{\l }$ are embedded and that
if $M$ is not a plane, then $M$ has no points with horizontal tangent planes and no planar ends
(because either of these cases produces self-intersections in $M_{\l }$ for sufficiently large
or  small values of $\l >0$);
in this situation, the third coordinate
function of $M$ is proper without critical points, and an application of Morse theory implies that $M$
has the topology of an annulus. From here it is not difficult to prove that $M$ is a catenoid.

\subsection{The case of just one end.}
\label{sec5.2}
The next case to consider is when $M\in {\cal P}$ has exactly one end, in particular $M$
is topologically a plane, and
the goal is to show that $M$ congruent to a plane or to a helicoid.
This case was solved by Meeks and Rosenberg~\cite{mr8},
by an impressive application of a series of powerful new tools in minimal surface theory:
the study of minimal laminations, the so-called Colding-Minicozzi theory and some progresses in the
understanding of the conformal structure of complete minimal surfaces. The first two of these tools
study the possible limits of a sequence of embedded minimal
surfaces under lack of either uniform local area bounds (minimal laminations) or
uniform local curvature bounds
(Colding-Minicozzi theory) or even both issues happening simultaneously; this is
in contrast to the classical situation for describing such limits, that requires
both uniform local area and curvature bounds to obtain a classical limit minimal surface.

The study of minimal laminations, carried out in~\cite{mr8} by Meeks and Rosenberg,
allows one to relate completeness of embedded minimal surfaces in $\R^3$ to their properness,
which is a stronger condition.
For instance, in~\cite{mr8} Meeks and Rosenberg
proved that if $M\subset \R^3$ is a connected, complete embedded minimal surface
with finite topology and bounded
Gaussian curvature on compact subdomains of $\rth$, then $M$ is proper; this properness conclusion
was later generalized to the case of finite genus
by Meeks, P\'erez and Ros in~\cite{mpr3}, and Colding and Minicozzi~\cite{cm35}
proved it in the stronger case that one drops the
local boundedness hypothesis for the Gaussian curvature. The theory of minimal laminations
has led to other interesting results by itself, see e.g., \cite{Kh1,kl1,me25,me30,mpr10,mpr11,mpr18,mr13}.

Regarding Colding-Minicozzi theory in its relation to the classification by Meeks and Rosenberg of the
plane and the helicoid as the only simply connected elements in ${\cal P}$, its main result, called the
{\it Limit Lamination Theorem for Disks,} describes
the limit of (a subsequence of) a sequence of compact, embedded minimal disks $M_n$ with boundaries
$\partial M_n$ lying in the boundary spheres of Euclidean balls $\B (R_n)$ centered at the origin, where the
radii of these balls diverge to $\infty $, provided that the Gaussian curvatures of the $M_n$ blow up at some
sequence of points in $M_n\cap \B (1)$: they proved that this limit is a foliation $\cF$ of $\R^3$ by parallel
planes, and the convergence of the $M_n$ to $\cF$ is of class $C^{\a }$, $\a \in (0,1)$, away from some
Lipschitz curve $S$ (called the singular set of convergence of the $M_n$ to $\cF$)
that intersects each of the planar leaves of $\cF$ transversely just once,
and arbitrarily close to every point of $S$,
the Gaussian curvature of the $M_n$ also blows up as $n\to \infty $, see~\cite{cm23}
for further details. There is another
result of fundamental importance in Colding-Minicozzi theory, that is used to prove
the Limit Lamination Theorem for Disks
and also to demonstrate the results in~\cite{cm35,mpr3,mr8} mentioned in the last paragraph,
which is called
the {\it one-sided curvature estimate}~\cite{cm23}, a scale invariant bound for the
Gaussian curvature
of any embedded minimal disk in a half-space.

With all these ingredients in mind, we  next give a rough sketch of the proof by Meeks and Rosenberg
of the uniqueness of the helicoid as the unique simply connected, properly embedded,
nonplanar minimal surface
in $\R^3$. Let $M\in {\cal P}$ be a simply-connected surface. Consider any sequence
of positive numbers $\{ \lambda_n \}_{n}$ which
decays to zero and let $\lambda_n M$ be the surface scaled
 by $\lambda_n$.  By the Limit Lamination Theorem for Disks, a subsequence of
 these surfaces converges on
 compact subsets of $\rth$ to a minimal foliation $\cF$ of
 $\rth$ by parallel planes, with singular set of convergence $S$
being a Lipschitz curve that can be parameterized by the height over
the planes in $\cF $. An application of Colding-Minicozzi theory assures that the
limit foliation $\cF$
is independent of the sequence $\{\lambda_n\}_n$. After a rotation of $M$ and replacement of the $\l _nM$
 by a subsequence, one can suppose that the $\l _nM$ converge to the foliation $\cF$ of $\rth$ by
horizontal planes, outside of the singular set of convergence given by a Lipschitz curve $S$
parameterized by its
$x_3$-coordinate. A more careful analysis of the convergence of $\l _nM$ to $\cF$ allows also to show that
$M$ intersects transversely each of the planes in $\cF$. This implies that the Gauss map of $M$
does not take vertical values, so after composing with the stereographical projection we can
write this Gauss map $g \colon M \to \C \cup \{\infty \}$ as
\begin{equation}
\label{eq:ghelic}
g(z)=e^{f(z)}
\end{equation}
for some holomorphic function $f\colon M \to \C$, and the height differential $\phi _3$ of $M$
has no zeros or poles. The next step in the proof is to check that the conformal structure
of $M$ is $\C $; to see this, first observe that the nonexistence of points in $M$ with vertical
normal vector implies that the intrinsic gradient of the third coordinate function $x_3\colon M\to \R $
has no zeros on $M$. In a delicate argument that uses both the above Colding-Minicozzi picture for
limits under shrinkings of $M$ and a finiteness result for the number of components of a minimal graph over
a possibly disconnected, proper domain in $\R^2$ with zero boundary values, Meeks and Rosenberg proved that
none of the integral curves of $\nabla x_3$ is asymptotic to a plane in $\cF$, and that
every such horizontal plane intersects $M$ transversely in a single proper arc. This is enough to
use the conjugate harmonic function $x_3^*$ of $x_3$ (which is well-defined on $M$ as the surface
is simply connected) to show that $x_3+ix_3^*\colon M\to \C $ is a conformal diffeomorphism.
Once one knows that $M$ is conformally $\C $, then we can reparameterize conformally $M$ so that
\[
\phi _3=dx_3 + i\, dx_3^* =dz;
\]
in particular, the third coordinate $x_3 \colon \C \to \R$ is $x_3(z)=\mbox{Re} (z)$.

To finish the proof,
it only remains to determine the Gauss  map $g$ of $M$, of which we now have the description
(\ref{eq:ghelic}) with $f\colon \C \to \C $ entire.
If the holomorphic function  $f(z)$ is a linear function of the form $az+b$, then one deduces that
$M$ is an associate surface\footnote{The family of {\it associate
surfaces} of a simply connected minimal surface with Weierstrass
data $(g,\phi _3)$ are those with the same Gauss map and height
 differential $e^{i\t }\phi _3$, $\t \in [0,2\pi )$. In particular, the case $\t =\pi /2$
 is the conjugate surface. This notion can be generalized to non-simply connected surfaces,
 although in that case the associate surfaces may have periods.} to the helicoid; but none of the
nontrivial associate surfaces to the helicoid are injective as mappings, which implies that $M$ must be the
helicoid itself when $f(z)$ is linear. Thus, the proof reduces to proving that $f(z)$ is linear.
The explicit expression for the Gaussian curvature $K$ of $M$ in terms of $g,\phi _3$ is
\begin{equation}
\label{eq:K}
K=-\left( \frac{4\left| dg/g\right| }{(|g|+
|g|^{-1})^2|\phi _3|}\right) ^2.
\end{equation}
Plugging our formulas (\ref{eq:ghelic}), $\phi _3=dz$ in our setting, an application of
Picard's Theorem to $f$ shows that
$f(z)$ is  linear if and only if $M$ has bounded curvature. This boundedness curvature assumption for $M$ can be achieved
by a clever blow-up argument, thereby finishing the sketch of the proof.  For further
 details, see~\cite{mr8}.

\subsection{Back to the Riemann minimal examples: the case of infinitely many ends.}
To finish our outline of the solution of the classification problem stated at
the beginning of Section~\ref{MPR}, we must explain how
to prove that the Riemann minimal examples are the only properly embedded planar domains in $\R^3$
with infinitely many ends.

Let $M\in {\cal P}$ be a surface with infinitely many ends. First we analyze the structure of
the space ${\cal E}(M)$ of ends of $M$. ${\cal E}(M)$ is the quotient
${\mathcal A}/_\sim $ of the set
\[
{\mathcal A}=\{ \a \colon [0,\infty )\to M\ | \ \a \mbox{ is a proper arc}\}
 \]
(observe that ${\mathcal A}$ is nonempty as $M$ is
not compact) under the following equivalence relation: Given $\a_1, \a_2\in {\mathcal A}$, $\a _1\sim \a _2$
if for every compact set $C \subset M$, there exists $t_C\in [0,\infty )$ such that $\a _1(t),\a _2(t)$
lie the same component of $M-C$, for all $t\geq t_C$. Each equivalence class in ${\mathcal E}(M)$ is called
a {\it topological end} of $M$. If $e\in {\mathcal E}(M)$, $\a \in e$ is a representative proper arc
and $\Omega \subset M$ is a proper subdomain with  compact boundary such that $\alpha \subset \Omega $,
then we say that the domain $\Omega $ {\it represents} the end $e$.

The space ${\mathcal E}(M)$ has a natural topology, which is defined in terms of a basis of open sets:
for each proper domain $\Omega \subset M$ with compact boundary, we define the basis open
set $B(\Omega ) \subset {\mathcal E} (M)$ to be those equivalence
classes in ${\mathcal E}(M)$ which have representatives contained in
$\Omega $. With this topology, ${\mathcal E}(M)$ is a totally disconnected compact Hausdorff space
which embeds topologically as a subspace of $[0,1]\subset \R $ (see e.g., pages 288-289 of \cite{mpe1}
 for a proof of this embedding result for ${\mathcal E}(M)$).

In the particular case that $M$ is a properly embedded minimal surface in $\R^3$ with more than one end,
a fundamental result is that ${\mathcal E}(M)$ admits a geometrical ordering by relative heights over
a plane called the {\it limit plane at infinity} of $M$. To define this reference plane, Callahan,
Hoffman and Meeks~\cite{chm3} showed that in one of the closed complements
 of $M$ in $\R^3$, there exists a noncompact, properly embedded minimal surface
$\Sigma $ with compact boundary and finite total curvature. The ends of $\Sigma $ are then
of catenoidal or planar type, and the embeddedness of
$\Sigma $ forces its ends to have parallel normal vectors at infinity.
The limit tangent plane at infinity of $M$ is the
plane in $\R^3$ passing through the origin, whose normal
vector equals (up to sign) the limiting normal vector at the ends of $\Sigma $.
It can be proved that such a plane does not depend on the finite total curvature
 minimal surface $\Sigma \subset \R^3-M$~\cite{chm3}. With this notion in hand,
the ordering theorem is stated as follows.

\begin{theorem} {\bf (Ordering Theorem,  Frohman, Meeks~\cite{fme2})}
\label{thordering}
Let $M\subset \R^3$ be a properly embedded minimal surface with
more than one end and horizontal limit tangent plane at
infinity. Then, the space ${\mathcal E}(M)$ of ends of
$M$ is linearly ordered geometrically by the relative
heights of the ends over the
$(x_1,x_2)$-plane, and embeds topologically in $[0,1]$
in an ordering preserving way. Furthermore,
this ordering has a topological nature in the following
sense: If $M$ is properly isotopic to a properly embedded
minimal surface $M'$ with
horizontal limit tangent plane at infinity, then the associated
 ordering of the ends
of $M'$ either agrees with or is opposite to the ordering coming
from $M$.
\end{theorem}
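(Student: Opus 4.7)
The plan is to construct the height ordering on ${\mathcal E}(M)$ by exhibiting, for each pair of distinct ends, a horizontal plane that geometrically separates their sub-domain representatives far out in $\R^3$. The existence of such separating planes comes from combining the Callahan--Hoffman--Meeks finite total curvature surface $\Sigma \subset \R^3-M$ (already used to define the limit tangent plane at infinity) with the maximum principle at infinity for properly embedded minimal surfaces and the Hoffman--Meeks strong half-space theorem.

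First I would fix, for each end $e\in {\mathcal E}(M)$, a proper sub-domain representative $\Omega _e\subset M$ with compact boundary. The horizontal limit tangent plane hypothesis, together with the barrier $\Sigma $ whose ends are asymptotic to horizontal planes or vertical half-catenoids, cuts $\R^3$ into finitely many horizontal slabs and two half-spaces; each representative $\Omega _e$ must respect this decomposition. To compare distinct ends $e_1\neq e_2$, I would pick disjoint representatives $\Omega _{e_1}, \Omega _{e_2}$ and invoke the maximum principle at infinity, applied if necessary to additional barrier surfaces produced by reapplying Callahan--Hoffman--Meeks to other components of $\R^3-M$ or by vertically translating copies of $\Sigma $, to produce a horizontal plane $\{ x_3=t\} $ that, outside a sufficiently large ball, has one of $\Omega _{e_1}, \Omega _{e_2}$ below it and the other above. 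Setting $e_1<e_2$ when $\Omega _{e_1}$ lies below gives the geometric ordering. The principal obstacle lies here: a single $\Sigma $ directly yields only finitely many asymptotic horizontal heights, while ${\mathcal E}(M)$ may be infinite, so one must synthesize enough independent barriers and control their interaction with $M$ through the maximum principle at infinity in order to separate arbitrarily many pairs of ends.

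Next I would verify that the relation is well-defined and total. Transitivity reduces to iterating the pairwise-separation argument for triples of ends; independence of the representatives and antisymmetry come from the observation that two disjoint ends cannot be simultaneously separated by horizontal planes giving opposite orders. The order-preserving embedding ${\mathcal E}(M)\hookrightarrow [0,1]$ then follows from the already-noted topological embedding of the compact totally disconnected Hausdorff space ${\mathcal E}(M)$ into $[0,1]$, provided one checks that the basis open sets $B(\Omega )$ are order-convex for $\Omega =M-C$ with $C\subset M$ compact; this latter convexity is precisely what the separating-plane construction delivers, since an end represented inside $\Omega $ corresponds to a slab of heights bounded by the planes associated to $\partial C$.

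Finally, for the topological invariance, I would argue that a proper isotopy $\{ M_t\} _{t\in [0,1]}$ from $M=M_0$ to $M'=M_1$, all having horizontal limit tangent plane at infinity, induces a continuously varying family of height orderings on a fixed copy of ${\mathcal E}(M)$. Since the set of linear orderings on ${\mathcal E}(M)$ that are order-preservingly embeddable in $[0,1]$ has exactly two path components — a given ordering and its reversal — this continuous family must be locally constant in $t$, forcing the ordering on ${\mathcal E}(M')$ to either agree with or be exactly opposite to the one inherited from $M$.
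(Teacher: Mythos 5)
The paper does not prove this theorem; it is quoted from Frohman--Meeks \cite{fme2}, so there is no internal proof to compare against. Judged on its own merits, your proposal has a genuine gap at its central step: the claim that two distinct ends $e_1\neq e_2$ can always be separated, outside a large ball, by a \emph{horizontal plane}. This fails already for finite total curvature surfaces with two catenoidal ends of equal logarithmic growth $a>0$: both representatives eventually rise above every horizontal plane, yet they are still ordered by relative height (one lies above the other at infinity). Worse, in the infinite topology case the ends of $M$ --- in particular its limit ends --- have no a priori asymptotic description at the time the Ordering Theorem must be proved (Collin's theorem applies only to annular ends, and limit ends are not annular), so you cannot ``cut $\R^3$ into finitely many horizontal slabs'' that the representatives respect, nor can you read off heights of the ends of $M$ directly. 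The actual mechanism in \cite{fme2} is different: for each pair of ends one solves a least-area Plateau-type problem (Meeks--Yau barriers) in the closure of a complementary component of $M$ to produce a properly embedded, \emph{stable} minimal surface with compact boundary separating the two ends; stability forces this separating surface to have finite total curvature, hence planar or catenoidal ends that are eventually graphical over the limit tangent plane, and the ordering of $e_1,e_2$ is read off from which side of this controlled surface each end eventually lies. Your text gestures at reapplying Callahan--Hoffman--Meeks to other complementary components, which is the right instinct, but the separating objects must remain minimal surfaces with controlled asymptotics, not horizontal planes, and transitivity/well-definedness must be checked against these surfaces rather than against planes.

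The topological-invariance step is also not established by your argument. The assertion that ``the set of linear orderings on ${\mathcal E}(M)$ order-preservingly embeddable in $[0,1]$ has exactly two path components'' is not meaningful as stated (for an infinite end set there are many such orderings, and no topology on this set has been specified), and the claim that a proper isotopy induces a ``continuously varying family of height orderings'' presupposes that the geometric separation structure persists along the isotopy, which is essentially the content to be proved. In \cite{fme2} this part rests on showing that the separating surfaces, and hence the induced order, can only be globally preserved or globally reversed under a proper isotopy between the two minimal surfaces; it is not a soft connectedness argument.
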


Given a minimal surface $M\subset \R^3$ satisfying the hypotheses of
Theorem~\ref{thordering}, we define the {\it top end} $e_T$ of $M$ as the unique
maximal element in  ${\mathcal E}(M)$ for the ordering
given in this theorem (as ${\mathcal E}(M)\subset [0,1]$
is compact, then $e_T$ exists). Analogously, the {\it bottom
end} $e_B$ of $M$ is the unique minimal element in ${\mathcal E}(M)$.  If
$e\in {\mathcal E}(M)$ is neither the top nor the bottom end of $M$,
then it is called a {\it middle end} of $M$. There is another way of
grouping ends of such a surface $M$ into simple and limit ends; for
the sake of simplicity and as we are interested in discussing the classification
of surfaces $M\in {\cal P}$, we will restrict in the sequel to the case of
a surface $M$ of genus zero.

Given $M\in {\cal P}$, an isolated point $e \in {\mathcal E} (M)$ is
 called a {\it simple end of $M$}, and $e$ can be represented
by a proper subdomain $\Omega \subset M$ with compact boundary which is homeomorphic
to the annulus $\esf ^1 \times [0,\infty)$. Because of this model, $e$ is also
called an {\it annular end.} On the contrary, ends in ${\mathcal E} (M)$ which are
 not simple (i.e., they are limit points of ${\mathcal E}(M)\subset [0,1]$) are called
{\it limit ends} of $M$. In our situation of $M$ being a planar domain, its
limit ends can be represented by proper subdomains $\Omega  \subset M$ with
compact boundary, genus zero and infinitely many ends. As in this section
$M$ is assumed to have
infinitely many ends and ${\mathcal E}(M)$ is compact, then $M$ must have at least
one limit end.

Each of the planar ends of a Riemann minimal
example $R_{\l }$ is a simple annular middle end, and $R_{\l }$ has two limit ends corresponding
to the limits of planar ends as the height function of $R_{\l }$ goes to $\infty $ (this
is the top end of $R_{\l }$) or to $-\infty $ (bottom end). Thus, middle ends of $R_{\l }$
correspond to simple ends, and its top and bottom ends are limit ends.
Most of this behavior is in fact valid for any properly embedded minimal surface
$M\subset \R^3$ with more than one end:

\begin{theorem}[Collin, Kusner, Meeks, Rosenberg~\cite{ckmr1}]
\label{thmckmr}
 Let $M\subset \R^3$ be a properly embedded minimal surface with more
than one end and horizontal limit tangent plane at infinity. Then, any
limit end of $M$ must be a top or bottom end of $M$. In particular, $M$ can have
 at most two limit ends, each middle end is simple and the number of ends of $M$ is
countable.
\end{theorem}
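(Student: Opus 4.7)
The plan is to reduce the entire statement to the single claim that every middle end of $M$ is a simple (annular) end. Granting this, limit ends cannot be middle ends, so they must coincide with the top end $e_T$ or bottom end $e_B$ produced by the Ordering Theorem; in particular there are at most two limit ends. Moreover, since $\mathcal{E}(M)\subset [0,1]$ is compact with at most two accumulation points, the whole set of ends is countable. So I would fix a middle end $e\in \mathcal{E}(M)$ and assume, for contradiction, that $e$ is a limit end. By the Ordering Theorem, one can find ends $e^-,e^+\in\mathcal{E}(M)$ with $e^-<e<e^+$; after choosing proper subdomains $\Omega^\pm\subset M$ with compact boundary representing $e^\pm$, the relative height ordering together with properness of $M$ yields heights $h^-<h^+$ such that the noncompact portions of $\Omega^\pm$ lie in $\{x_3\leq h^-\}$ and $\{x_3\geq h^+\}$ respectively, while a representative $\Omega$ of $e$ meets the slab $S=\{h^-\leq x_3\leq h^+\}$.

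The heart of the argument is to confine the noncompact portion of $\Omega$ to $S$. The tool I would invoke is the theorem of Callahan-Hoffman-Meeks producing, in each closed complementary component of $M$ in $\R^3$, a properly embedded oriented stable minimal surface with compact boundary and finite total curvature, whose ends, by horizontality of the limit tangent plane at infinity of $M$, are asymptotic to horizontal planes or half-catenoids. Placing such stable barriers in the complementary components sitting above $\Omega^+$ and below $\Omega^-$, and applying the Maximum Principle at Infinity of Meeks-Rosenberg between $\Omega$ and these disjoint barriers, I would conclude that no sequence of points in $\Omega$ can diverge with heights leaving $S$, so $\Omega$ lies in $S$ up to a compact set. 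The main obstacle I anticipate is the careful placement of the barriers: one must simultaneously exploit the linear height-ordering of ends, the finite-total-curvature asymptotic geometry of the stable complementary surfaces, and the maximum principle at infinity, in order to genuinely separate $\Omega$ from $\Omega^\pm$ on both sides without any barrier crashing into $M$ itself.

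Finally, I would argue that a representative $\Omega$ of a genus zero middle end, confined modulo a compact set to the slab $S$, must itself be annular. On such $\Omega$ the function $x_3$ is bounded and harmonic; by the maximum principle its horizontal level sets have no closed components bounding discs in $\Omega$, and critical points of $x_3|_\Omega$ correspond to the isolated horizontal tangencies of $M$. A Morse-theoretic analysis of the level sets, combined with the compact boundary and the planar domain hypothesis, should then show that $\Omega$ has only finitely many topological ends and each of them is annular. In particular $e$ is a simple end of $M$, contradicting the assumption that $e$ was a limit end. This contradiction proves the key claim and, as explained in the first paragraph, the theorem follows.
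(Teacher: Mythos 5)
The paper itself does not prove this theorem; it is quoted from Collin--Kusner--Meeks--Rosenberg \cite{ckmr1} and used as a black box, so your attempt has to be measured against the argument in that reference. Your opening reduction is fine: ``every middle end is simple'' does imply that limit ends are top or bottom, that there are at most two of them, and that $\mathcal{E}(M)$, being a compact subset of $[0,1]$ with at most two accumulation points, is countable. Your second paragraph is also in the right spirit and close to the actual setup: the ends are ordered by height, Callahan--Hoffman--Meeks barriers with finite total curvature are placed in the complementary domains, and a middle end representative is trapped between two such barriers. (One caveat: because the barrier ends may be catenoidal, with logarithmic growth, the confinement region is not a genuine slab but a region of the form $\{|x_3|\leq c\log(x_1^2+x_2^2)+c'\}$; this weaker confinement is what \cite{ckmr1} actually works with, and it still suffices for what follows.)

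The genuine gap is in your last paragraph. Morse theory for $x_3$ on a slab-confined representative $\Omega$ cannot bound the number of ends of $\Omega$: all the sub-ends of a middle limit end would be ``horizontal,'' so they are not separated by level sets of $x_3$, the harmonic function $x_3|_\Omega$ may a priori have infinitely many critical points, and even a critical-point-free level set could have infinitely many noncompact components. The statement ``$\Omega$ has only finitely many topological ends'' is precisely the content of the theorem, and nothing in your sketch forces it. What actually closes the argument in \cite{ckmr1} is quantitative: a proper minimal surface confined to such a region is recurrent/parabolic, and from this one derives that a middle end representative has \emph{quadratic area growth}, $\mathrm{Area}(\Omega\cap\B(R))\leq CR^2$. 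On the other hand, the monotonicity formula gives each sub-end a definite quadratic lower bound on its area contribution, so a middle end containing infinitely many pairwise disjoint sub-end representatives would violate the upper bound. This area-growth mechanism, not level-set topology, is the missing idea; without it the contradiction you aim for in the final step does not materialize.
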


In the sequel, we will assume that our surface $M\in {\cal P}$ has horizontal limit
tangent plane at infinity. By Theorem~\ref{thmckmr}, $M$ has no middle
limit ends, hence either it has one limit end (this one being its top
 or its bottom limit end) or both top and bottom ends are the limit ends of $M$, like
 in a Riemann minimal example. The next step in our description of the
 classification of surfaces in ${\cal P}$ is due to
Meeks, P\'erez and Ros~\cite{mpr4}, who discarded
the one limit end case through the following result.

\begin{theorem}[Meeks, P\'erez, Ros \cite{mpr4}]
\label{thmno1limitend}
 If $M\subset \R^3$ is a properly embedded minimal surface
with finite genus, then $M$ cannot have exactly one limit end.
\end{theorem}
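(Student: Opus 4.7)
The plan is to argue by contradiction. Suppose $M\in {\cal P}$ has finite genus and exactly one limit end. By Theorem~\ref{thmckmr} the unique limit end is either the top or the bottom end of $M$, so after reflecting $M$ in a horizontal plane if necessary, we may assume the top end $e_T$ is the limit end. Then every other end of $M$ is a middle (hence simple/annular) end or the bottom end, which must also be simple since it is not a limit end. Combined with the finite genus assumption, this means that $M$ minus a neighborhood of $e_T$ has finite topology, and its finitely many remaining ends are catenoidal or planar (as in the finite-ends case analyzed by L\'opez--Ros in Section~\ref{sec5.1}).

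The first key step is to understand the asymptotic geometry near the limit end $e_T$. I would run a blow-down/dynamics argument along a sequence of points $p_n\in M$ diverging to $e_T$, rescaling $M$ by a sequence of factors $\l_n\to 0$ chosen to extract uniform local area and curvature bounds near the $p_n$ after rescaling. Combining the minimal lamination machinery and the one-sided curvature estimate from Colding--Minicozzi theory (as used in Section~\ref{sec5.2}), I would argue that any such subsequential limit is a properly embedded minimal surface $M_\infty\subset \R^3$ of bounded curvature that is invariant under a nontrivial vertical translation $T_\infty$ pointing toward $e_T$, and in fact looks locally like a Riemann minimal example stacked periodically. Essentially, the existence of the limit end forces a \emph{periodic} asymptotic model.

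The second step is to transfer this asymptotic translational symmetry back to $M$ itself. The vertical translation of $M_\infty$ would be reflected in an associated bounded Jacobi function (the normal component of the translation Killing field) on neighborhoods of $e_T$ in $M$. Using the parabolicity of the limit end (which follows from quadratic area growth theorems for limit ends of properly embedded minimal surfaces) together with a Liouville-type principle for bounded Jacobi functions on parabolic minimal surfaces, I would conclude that the Jacobi function extends globally and is actually the Jacobi function of a genuine translational symmetry of $M$. Hence $M$ itself is invariant under a nontrivial vertical translation.

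The contradiction is then immediate: a translation-invariant properly embedded minimal surface in $\R^3$ has no annular (catenoidal or planar) ends — each end either is a limit end or is periodic, neither of which is compatible with the bottom end of $M$ being a simple end of finite type. The main obstacle I expect is the second step, namely the rigorous promotion of asymptotic translational symmetry at $e_T$ to a global translational symmetry of $M$. This requires controlling the conformal type of $M$ near the limit end, proving a Liouville theorem for bounded Jacobi fields in this setting, and invoking the integrable-systems/KdV machinery alluded to in the introduction — this is precisely the technical heart of~\cite{mpr4}.
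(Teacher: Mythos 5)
Your overall strategy diverges substantially from the paper's, and the route you choose has a genuine gap at its center. The paper's argument never establishes (and never needs) asymptotic or global periodicity. Instead, after ordering the annular ends and showing each is asymptotic to a catenoid end with negative logarithmic growth, it studies the homothetic shrinkings $\l_n M$ with $\l_n\to 0$ and proves they are locally simply connected in $\R^3-\{\vec{0}\}$; the subsequential limits are then minimal \emph{laminations} of $\{x_3\geq 0\}-\{\vec{0}\}$ containing the plane $\{x_3=0\}$ as a leaf, with \emph{empty} singular set of convergence. This yields the local curvature bound $(\star)$, hence quadratic decay of $|K_M|$ in terms of distance to the origin, and the Quadratic Curvature Decay Theorem (Theorem~\ref{thm1introd}) then forces $M$ to have finite total curvature --- contradicting the infinitude of ends. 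Note that your blow-down cannot produce a ``periodically stacked Riemann-type'' limit: under shrinking to zero scale any periodic structure collapses, and the correct limit object is a lamination by flat leaves. If instead you translate toward $e_T$ \emph{without} rescaling, you have no a priori curvature or local area bounds near a one-limit-end; the bounds of that type in Theorem~\ref{thm5.7} are derived only for two-limit-end surfaces and depend on the flux normalization, so the compactness needed to extract your unit-scale periodic model is not available.

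The second step is also not salvageable as stated. The function $\langle N,a\rangle$ is a (bounded) Jacobi function on \emph{every} minimal surface, for every vector $a$, simply because translations are ambient isometries; its existence or boundedness near $e_T$ carries no information about whether $M+ta=M$. A Liouville theorem for bounded Jacobi functions on a parabolic end could at best identify a given bounded Jacobi function with a linear one --- it cannot upgrade an infinitesimal symmetry to the global geometric identity $M+ta=M$, and no mechanism in your sketch performs that upgrade. (Indeed, even in the two-limit-end case, where quasi-periodicity of the Gauss map \emph{is} established, the passage to actual periodicity is exactly the hard content of Propositions~\ref{prop5.8} and~\ref{propult} and requires the Shiffman/KdV machinery; it is not obtained by a Liouville argument at one end.) Your final contradiction --- that a vertically periodic surface cannot have a simple bottom end --- is fine, but the two steps leading to it do not hold up. The paper's curvature-decay route is the one to internalize here: it converts the existence of a single limit end directly into quadratic curvature decay and hence finite total curvature, with no appeal to periodicity.
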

The proof of Theorem~\ref{thmno1limitend} is by contradiction. One assumes
that the set of ends of $M$, linearly ordered by increasing heights by
the Ordering Theorem~\ref{thordering}, is ${\mathcal E}(M)=
\{ e_1,e_2,\ldots ,e_{\infty }\} $ with the limit end of $M$ being its
 top end $e_{\infty }$. Each annular end of $M$ is a simple end and
can be proven to be asymptotic to a graphical annular end $E_n$ of a vertical
catenoid with negative logarithmic growth $a_n$ satisfying $a_1 \leq \ldots \leq a_n \leq \ldots < 0$
(Theorem~2 in Meeks, P\'erez and Ros~\cite{mpr3}). Then one studies
the subsequential limits of homothetic shrinkings $\{ \l _nM\}
_n$,  where  $\{ \l _n\} _n\subset \R ^+$ is  any sequence of numbers decaying to zero;
recall that this was also a crucial step in the proof of the uniqueness of the
helicoid sketched in Section~\ref{sec5.2}. Nevertheless, the situation now is
more delicate as the surfaces $\l _nM$ are not simply connected. Instead, it can be
proved that the sequence $\{ \l _nM\} _n$ is {\it locally simply connected} in
$\R^3-\{ \vec{0}\} $, in the sense that given any point $p\in \R^3-\{ \vec{0}\} $,
there exists a number $r(p)\in (0,|p|)$ such that the open ball $\B (p,r(p))$
centered at $p$ with radius $r(p)$ intersects $\l _nM$ in compact disks whose boundaries lie
on $\partial \B (p,r(p))$, for all $n\in \N$. This is a
difficult technical part of the proof, where the
Colding-Minicozzi theory again plays a crucial role. Then one uses this locally simply connected property
in $\R^3-\{ 0\} $ to show that the
limits of subsequences of $\{ \l _nM\} _n$ consist of minimal laminations
${\mathcal L}$ of $H(*)=\{ x_3\geq 0\}
-\{ \vec{0}\} \subset \R^3$ containing $\partial H(*)$ as a leaf, and that the
singular set of convergence of the of $\l _nM$ to ${\mathcal L}$ is empty; from here
one has that
\begin{quote}
$(\star )$ The sequence $\{ |K_{\l _nM}|\} _n$ of absolute Gaussian curvature functions of the
$\l _nM$, is locally bounded in $\R^3-\{ 0\} $.
\end{quote}
In particular, taking $\l _n=|p_n|^{-1}$ where
$p_n$ is any divergent sequence of points on $M$, $(\star )$ implies that the absolute Gaussian
curvature of $M$ decays at least quadratically in terms of the distance function $|p_n|$ to
 the origin. In this setting, the Quadratic Curvature Decay Theorem stated in
Theorem~\ref{thm1introd} below implies that $M$ has finite total curvature; this is impossible in our
situation with infinitely many ends, which finishes our sketch of proof of Theorem~\ref{thmno1limitend}.

\begin{theorem} {\bf (Quadratic Curvature Decay Theorem, \, Meeks, P\'erez,
Ros~\cite{mpr10})} \label{thm1introd} Let $M\subset \R^3-\{
\vec{0}\} $ be an embedded minimal surface with compact boundary
(possibly empty), which is complete outside the origin $\vec{0}$;
i.e., all divergent paths of finite length on~$M$ limit to
$\vec{0}$. Then, $M$
has quadratic decay of curvature\footnote{This means that $|K_M|R^2$
is bounded on $M$, where $R^2=x_1^2+x_2^2+x_3^2$.}
if and only if its closure in $\R^3$ has finite total curvature.
\end{theorem}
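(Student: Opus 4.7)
My strategy is to handle the two implications separately. The direction \emph{finite total curvature of $\overline{M}\Rightarrow$ quadratic decay} follows from the classical finite-total-curvature theory, while the substantive reverse direction will be established by a scaling/compactness argument combined with a classification of the resulting blow-down minimal lamination.

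For the easy direction, suppose $\overline{M}\subset\R^3$ has finite total curvature. Then by Huber's theorem $M$ is conformally a compact Riemann surface with finitely many punctures (corresponding to its ends, some of which may accumulate at $\vec{0}$), and by Osserman the Weierstrass data extend meromorphically through these punctures. Each end of $\overline{M}$ at infinity is asymptotic to a plane or half-catenoid (Schoen), and each end accumulating at $\vec 0$ is reduced to an end at infinity by inversion in the unit sphere, which is conformal and minimality-preserving. A direct computation using formula~(\ref{eq:K}) gives $|K_M|=O(R^{-4})$ on each end at infinity (and the inverted analogue near $\vec 0$), from which $|K_M|R^2$ is bounded on a neighborhood of every end; boundedness on the compact middle part of $M$ is automatic.

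For the reverse direction, assume $|K_M|R^2\leq C$ on $M$, and fix any sequence $\l_n\downarrow 0$. The rescaled surfaces $M_n:=\l_n M$ again satisfy $|K_{M_n}|R^2\leq C$, so their Gaussian curvatures are uniformly bounded on each compact subset of $\R^3-\{\vec 0\}$. Combined with local area control away from the origin (via the monotonicity formula and the embeddedness of each $M_n$) together with the compactness of $\partial M$, a standard compactness theorem produces a subsequential limit $\mathcal{L}_\infty$, a minimal lamination of $\R^3-\{\vec 0\}$ whose leaves are properly embedded minimal surfaces of bounded curvature. Since both the hypothesis and the construction are invariant under dilations centered at $\vec 0$, the lamination $\mathcal{L}_\infty$ is itself dilation-invariant, so each of its leaves is a smooth minimal cone with vertex $\vec 0$ in $\R^3$; the only such cones are planes. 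The maximum principle for minimal laminations forces these planes to be pairwise disjoint off $\vec 0$ and hence parallel, so in the simplest case $\mathcal{L}_\infty$ is a single plane $\Pi_\infty$ through the origin. The same rescaling applied with $\mu_n\uparrow\infty$ yields an analogous blow-up lamination at $\vec 0$. From this geometric picture I would conclude that outside a sufficiently large ball, $M$ consists of finitely many graphical annular ends over $\Pi_\infty$ with bounded Hessian, and that in a sufficiently small punctured neighborhood of $\vec 0$ the surface similarly decomposes into finitely many graphical ends. A standard end analysis (meromorphic extension of Weierstrass data through the puncture, or Schoen's asymptotic theorem) then shows that each such end has finite total curvature; summing contributions yields finite total curvature of $\overline{M}$.

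The main obstacle is the blow-down classification step. A priori the sequence $\{\l_n M\}_n$ need not be locally simply connected in $\R^3-\{\vec 0\}$, so it might develop a Colding-Minicozzi-type singular set of convergence in the limit, and one must rule this out in order to conclude that $\mathcal{L}_\infty$ actually consists of smooth planes with finite sheet multiplicity. This is where the one-sided curvature estimate and the stability theory for leaves of limit minimal laminations must be brought in, in analogy with the locally simply connected arguments used in the sketch of Theorem~\ref{thmno1limitend}; without these inputs one cannot pass from the pointwise bound $|K_M|R^2\leq C$ to the quantitative finite-total-curvature conclusion for $\overline{M}$.
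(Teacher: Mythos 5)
The paper itself gives no proof of this theorem --- it is quoted from~\cite{mpr10} --- so your proposal can only be measured against the argument given there. Your strategy for the hard direction (blow down by $\l_n\downarrow 0$, extract a limit minimal lamination of $\R^3-\{\vec{0}\}$, show its leaves are planes, deduce finitely many graphical ends and hence finite total curvature) has the right overall shape, but one explicit claim is false and the two steps you gloss over are exactly where the content lies. First, inversion in the unit sphere is conformal on $\R^3\cup\{\infty\}$ but does \emph{not} preserve minimality (it preserves the Willmore energy, not the condition $H=0$; the mean curvature of the inverted surface picks up a term proportional to the support function), so your reduction of the ends accumulating at $\vec{0}$ to ends at infinity collapses, and with it the treatment of the origin in both directions. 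Second, the scale-invariance of the hypothesis $|K_M|R^2\le C$ does not make any particular subsequential limit $\mathcal{L}_\infty$ dilation-invariant: different sequences $\l_n$ may a priori produce different limits, and in the lamination setting (where leaves can have locally infinite area) there is no monotonicity formula or uniqueness-of-tangent-cone statement available to force the leaves to be cones. Relatedly, the monotonicity formula gives area \emph{lower} bounds, not the local area upper bounds you invoke; the correct compactness statement is the Meeks--Rosenberg lamination compactness, which needs only the curvature bound.

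The missing idea that makes the argument work in~\cite{mpr10} is their Local Removable Singularity Theorem: a minimal lamination of a punctured ball whose curvature times the squared distance to the puncture is bounded extends to a minimal lamination of the whole ball. Applied at $\vec{0}$ it controls $M$ near the origin directly (no inversion needed), and applied to the blow-down limits --- together with a stability argument for the non-isolated leaves of $\mathcal{L}_\infty$ and curvature estimates for stable minimal surfaces, which force such leaves to be flat --- it shows that $\mathcal{L}_\infty$ consists of planes, that the convergence has empty singular set and finite multiplicity, and hence that $M$ has finitely many graphical ends. You correctly identify this as ``the main obstacle,'' but flagging it is not the same as closing it: as written, the proposal does not establish the implication from quadratic decay to finite total curvature, and the easy direction is also incomplete at the origin.
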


Once we have discarded the case of a surface $M\in {\cal P}$ with just one limit
end, it remains to prove that when $M$ has two limit ends,
then $M$ is a
Riemann minimal example. The argument for proving this is also delicate,
but since it uses strongly the Shiffman function and its surprising connection
to the theory of integrable systems and more precisely, to the Korteweg-de Vries equation,
we will include some details of it.

We first need to establish a framework for $M$ which makes possible to use globally the
Shiffman function; here the word {\it globally} also refers its {\it extension across the planar ends} of
$M$, in a strong sense to be precise soon. Recall that we have normalized $M$ so that
its limit tangent plane at infinity is the $(x_1,x_2)$-plane.
By Theorem~\ref{thmckmr},
the middle ends of $M$ are not limit ends, and as $M$ has genus zero, then these middle ends
can be represented by annuli. Since $M$ has more than one end, then every annular end of $M$ has
finite total curvature (by Collin's theorem~\cite{col1}, that we also used at the beginning
of Section~\ref{sec5.1}), and thus such annular ends of $M$ are asymptotic to the ends of planes or catenoids.
Now recall Theorem~\ref{thmckmr} above, due to Collin, Kusner, Meeks and Rosenberg. The same authors
obtained in~\cite{ckmr1} the following additional information about the middle ends:

\begin{theorem}[Theorem~3.5 in~\cite{ckmr1}]
\label{thm5.5}
Suppose a properly embedded minimal surface $\Sigma$ in $\rth$ has two limit ends with horizontal limit tangent
plane at infinity.
Then there exists a sequence of horizontal planes $\{ P_j\} _{j\in \N }$ in $\R^3$ with
increasing heights, such that $\Sigma$ intersects each $P_j$ transversely in a compact set,
every middle end of $\Sigma$ has an end representative which is the closure of the intersection of $\Sigma$ with
the slab bounded by $P_j\cup P_{j+1}$, and every such slab contains exactly one of these middle end representatives.
\end{theorem}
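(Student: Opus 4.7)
The plan is to extract the planes $P_j$ from the asymptotic structure of the middle ends, together with a Sard argument. First I would describe the middle ends: by Theorem~\ref{thmckmr} every middle end of $\Sigma$ is simple, so admits an annular representative; since $\Sigma$ has more than one end, Collin's theorem then implies each such annular end has finite total curvature and so is asymptotic to either a horizontal plane or a vertical catenoid. I would rule out catenoidal middle ends: along a catenoidal end $x_3$ diverges to $\pm\infty$, so by the Ordering Theorem~\ref{thordering} such an end would be the maximal (respectively minimal) element of ${\mathcal E}(\Sigma)$, hence the top (respectively bottom) end, which contradicts its being a middle end, since the top and bottom ends of $\Sigma$ are its two limit ends. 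Thus each middle end $e_k$ is asymptotic to a horizontal plane at a well-defined height $h_k\in\R$, and this height coincides with the position of $e_k$ in the linear order on middle ends.

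Enumerate the middle ends as $\{e_k\}_{k\in I}$ with $h_k<h_{k+1}$. The next step is to establish that $\{h_k\}$ has no accumulation point in $\R$: if $h_{k_i}\to h_*\in\R$ along a subsequence, then the corresponding annular end representatives would produce an additional end of $\Sigma$ inside the slab $\{|x_3-h_*|<\varepsilon\}$, which would be an accumulation point of middle ends in ${\mathcal E}(\Sigma)$ and hence a middle limit end of $\Sigma$, contradicting Theorem~\ref{thmckmr}. Therefore $I=\Z$ with $h_k\to\pm\infty$ as $k\to\pm\infty$, consistent with the two limit ends of $\Sigma$ at $\pm\infty$. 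Now apply Sard's theorem to the smooth height function $x_3|_\Sigma$ to pick, for each $k$, a regular value $c_k\in(h_k,h_{k+1})$, and set $P_k=\{x_3=c_k\}$. Transversality of $\Sigma\cap P_k$ is immediate from the choice of regular value.

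It remains to check that $\Sigma\cap P_k$ is compact and that the slab between $P_k$ and $P_{k+1}$ contains exactly one middle end representative. For compactness, outside a sufficiently large Euclidean ball the portion of $\Sigma$ consists of the graphical annular ends, on each of which $x_3$ converges to the respective $h_j$; since $c_k$ is uniformly separated from every $h_j$, only finitely many of these graphs can reach the level $\{x_3=c_k\}$, and each does so inside a bounded region. Combined with properness of $\Sigma$, this forces $\Sigma\cap P_k$ to be bounded, hence compact. For the bijection between slabs and middle ends: the slab between $P_k$ and $P_{k+1}$ contains the asymptotic plane of $e_{k+1}$ and no other $h_j$, so the annular end of $e_{k+1}$ eventually lies inside this slab, while no other middle end has its graphical asymptote in it; closing up $\Sigma\cap\{c_k\leq x_3\leq c_{k+1}\}$ with its compact boundary curves in $P_k\cup P_{k+1}$ produces the required representative for $e_{k+1}$. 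The main technical obstacle in this plan is the no-finite-accumulation claim for $\{h_k\}$: turning a limit of asymptotic heights into a genuine additional limit end of $\Sigma$ must be made rigorous using the topology on ${\mathcal E}(\Sigma)$ from the basis open sets $B(\Omega)$ described earlier in the paper; once that is secured, the rest is a fairly routine calibration of the graphical asymptotic estimates from Collin's theorem against the chosen regular values.
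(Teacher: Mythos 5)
The paper does not actually prove this statement: it is imported verbatim as Theorem~3.5 of~\cite{ckmr1}, so there is no internal proof to compare against. Judged on its own terms, your outline reproduces the expected skeleton (middle ends are annular and planar, their asymptotic heights are discrete, separate them by planes at regular values), but it treats the genuinely hard points as routine, and the steps where you do so would fail as written. The most serious one is the compactness of $\Sigma\cap P_k$: you assert that ``outside a sufficiently large Euclidean ball the portion of $\Sigma$ consists of the graphical annular ends.'' That is true for finite total curvature surfaces and false here. $\Sigma$ has two limit ends, so outside \emph{every} ball it contains representatives of those limit ends, which are not graphical annuli and whose behavior near the level $\{x_3=c_k\}$ is precisely what is unknown; a priori the top limit end could dip down to height $c_k$ arbitrarily far from a vertical axis, making $\Sigma\cap P_k$ noncompact. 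Producing even one horizontal plane that meets $\Sigma$ in a compact set is a substantial part of the content of the cited theorem (in~\cite{ckmr1} it rests on the parabolicity result, Theorem~3.1 there, and on area growth estimates for middle end representatives), not a consequence of properness plus the asymptotics of the annular ends.

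Two further steps are under-argued. First, your exclusion of catenoidal middle ends: an end on which $x_3\to+\infty$ is not forced by the Ordering Theorem~\ref{thordering} to be the maximal element of ${\mathcal E}(\Sigma)$ --- the ordering compares \emph{relative} heights, and two upward-opening catenoidal ends with different logarithmic growths are both unbounded above yet strictly ordered, so neither ``unbounded above'' nor ``unbounded below'' pins an end to the top or bottom of the order. Ruling out catenoidal middle ends requires a genuine flux or maximum-principle argument (or the CKMR structure theory for middle ends). Second, the non-accumulation of the heights $h_k$: your contradiction only produces a limit end of $\Sigma$ ``located'' at the accumulation height $h_*$, and that limit end could perfectly well be the top or bottom limit end, which Theorem~\ref{thmckmr} permits. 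To close this you must also show that the two limit ends have unbounded height; note that in the genus-zero setting the positive lower bound on the spacing of consecutive planar ends (item~6 of Theorem~\ref{thm5.7}) is obtained from curvature estimates and the regular neighborhood theorem, i.e., from Colding--Minicozzi theory, not from soft end-topology. In short, your plan identifies the right intermediate statements, but the three places it labels as routine calibration are exactly where the real work of~\cite{ckmr1} lies.
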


Theorem~\ref{thm5.5} gives a way of separating the middle ends of $M$ into regions determined by
horizontal slabs, in a similar manner as
the planar ends of a Riemann minimal example
can be separated by slabs bounded by horizontal planes.
Furthermore, the Half-space Theorem~\cite{hm10} by Hoffman and Meeks
ensures that the restriction of the third coordinate function $x_3$ to
the portion $M(+)$ of $M$ above $P_0$ is not bounded from above and extends smoothly across the middle ends.
Another crucial result, Theorem~3.1 in~\cite{ckmr1}, implies that $M(+)$ is conformally
parabolic (in the sense that Brownian motion on $M(+)$ is recurrent), in particular the annular simple
middle ends of $M$ in $M(+)$ are conformally punctured disks. After compactification of $M(+)$ by adding
its middle ends and their limit point $p_{\infty }$ corresponding to the top end of $M$,
we obtain a conformal parameterization of this compactification defined on the unit disk $\D =\{ |z|\leq 1\} $,
so that $p_{\infty }=0$, the middle ends
in $M(+)$ correspond to a sequence of points $p_j\in \D -\{ 0\} $ converging to zero as $j\to \infty $,
and
\[
x_3|_{M(+)}(z)=-\l \ln|z|+c
\]
for some $\l ,c\in \R $, $\l>0$. This implies that there are no points in $M(+)$ with horizontal tangent plane.
Observe that different planar ends in $M$
cannot have the same height above $P_0$ by Theorem~\ref{thm5.5}, which implies that $M(+)$ intersects
every plane $P'$ above $P_0$ in a simple closed curve
if the height of $P'$ does not correspond to the height of any middle end, while $P'$ intersects $M(+)$ is a
proper Jordan arc when the height of $P'$ equals the height of a middle end.
Similar reasoning can be made for the surface $M(-)=M-[M(+)\cup P_0]$.
From here one deduces easily that the meromorphic extension through the planar ends
of the stereographically projected Gauss map $g$ of $M$ has order-two zeros and poles
at the planar ends, and no other
zeros or poles in $M$. This is a sketch of the proof of the first four items of the
following descriptive result,
which is part of Theorem~1 in~\cite{mpr3}.

\begin{theorem}
\label{thm5.6}
Let $M\in {\cal P}$ be a surface with infinitely many ends. Then, after a rotation and a homothety we have:
\begin{enumerate}
\item $M$ can be conformally parameterized by the cylinder $\C /\langle
i\rangle $ punctured in an infinite
discrete set of points $\{ p_j,q_j\} _{j\in \Z }$
which correspond to the planar ends of $M$.

\item The stereographically projected Gauss map $g\colon (\C /\langle
i\rangle )-\{ p_j,q_j\} _{j\in \Z }\to \C \cup \{ \infty \} $
extends through the planar ends of $M$ to a meromorphic function
$g$ on $\C /\langle i\rangle $  which has double zeros at the points $p_j$ and double poles at the $q_j$.

\item The height differential of $M$ is $\phi _3=dz$ with $z$ being the usual conformal
coordinate on~$\C $, hence the third coordinate function of $M$ is $x_3(z)=\mbox{\rm Re} (z)$.

\item The planar ends of $M$ are ordered by their heights so that
$\mbox{\rm Re} (p_j)<\mbox{\rm Re} (q_j)<\mbox{\rm Re} (p_{j+1})$ for all $j$ with
$\mbox{\rm Re} (p_j)\to \infty $ (resp. $\mbox{\rm Re} (p_j) \to -\infty $)
 when $j\to \infty $ (resp. $j\to -\infty $).
\end{enumerate}
\end{theorem}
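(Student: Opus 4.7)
The plan is to normalize $M$ (by rotation, homothety, and vertical translation) and use the structural theorems already collected together with Collin's finite total curvature theorem, the Hoffman--Meeks Half-space Theorem~\cite{hm10}, and Theorem~3.1 of~\cite{ckmr1} to reduce everything to a concrete Weierstrass model on a cylinder; the Gauss map data will then be read off from the local analysis at the planar punctures.

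First I would rotate $M$ so that its limit tangent plane at infinity is horizontal. By Theorem~\ref{thmno1limitend}, $M$ has exactly two limit ends, which are its top and bottom ends (Theorem~\ref{thmckmr}); the remaining middle ends are simple, and Theorem~\ref{thm5.5} separates them into horizontal slabs bounded by planes $\{P_j\}$. In particular each middle end has bounded height, so Collin's theorem~\cite{col1} applied to each of these annular ends makes it an end of finite total curvature asymptotic to a plane or to a half-catenoid; catenoidal ends are excluded by their unbounded height, so every middle end must be a planar end.

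Next I would fix the conformal structure of the portion $M(+)$ of $M$ above $P_0$. The Half-space Theorem shows that $x_3|_{M(+)}$ is unbounded above, and by Theorem~3.1 of~\cite{ckmr1} the surface $M(+)$ is conformally parabolic. Compactifying $M(+)$ by adjoining its planar middle ends (as removable punctures) and its top limit end $p_\infty$ parameterizes the compactification as the closed unit disk $\overline{\D}$ with $p_\infty=0$ and planar ends $\{p'_k\}\subset \D\setminus\{0\}$ accumulating at~$0$. Since $x_3|_{M(+)}$ is harmonic, bounded on compacta of $\overline{\D}\setminus\{0\}$, continuous across the $p'_k$, and divergent at~$0$, it must take the form $x_3=-\lambda\ln|z|+c$ for some $\lambda>0$. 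The uniformizing map $z=e^{-2\pi w}$ turns $\D\setminus\{0\}$ into a half-cylinder in $\C/\langle i\rangle$, and after scaling $M$ by $1/(2\pi\lambda)$ and translating vertically one obtains $\phi_3=dw$ and $x_3=\mathrm{Re}(w)$ on this half. The symmetric construction on $M(-)$, glued along the simple closed curve $P_0\cap M$ (the two halves carry matching $\phi_3=dw$ and hence compatible conformal structures), produces a conformal parameterization of $M$ on $\C/\langle i\rangle$ minus a discrete set of planar ends, establishing items~1 and~3. Item~4 is then immediate, since Theorem~\ref{thm5.5} strictly orders the planar ends by their heights $\mathrm{Re}(\cdot)$ and these heights diverge to $\pm\infty$ on the two halves.

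Finally, for item~2 the stereographically projected Gauss map $g$ is meromorphic on the punctured cylinder by the Weierstrass formalism. At each planar puncture $M$ is asymptotic to a horizontal plane, so $g$ extends with value $0$ or $\infty$; since $\phi_3=dw$ is nowhere vanishing, the formulas~(\ref{eq:phij}) together with the vanishing of the real periods around each puncture (required for single-valuedness of the immersion) force the vanishing or pole order of $g$ to be exactly two. Indeed, order one would contribute a residue to $\phi_1\mp i\phi_2$ and hence a nontrivial period, while order at least three would produce a multi-sheeted planar end incompatible with embeddedness. Labelling the double zeros of $g$ as $\{p_j\}$ and the double poles as $\{q_j\}$ then exhausts the planar ends. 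The interlacing $\mathrm{Re}(p_j)<\mathrm{Re}(q_j)<\mathrm{Re}(p_{j+1})$ in item~4 follows from embeddedness: two consecutive planar ends with the same vertical limit normal would be locally graphical over horizontal planes with the same orientation, and joining these asymptotic graphs through the intervening slab region would violate the maximum principle for minimal surfaces. The step I expect to be the main obstacle is precisely this combined extraction of the order-two behavior of $g$ and the alternation of $\{p_j\}$ with $\{q_j\}$, since both demand a rigorous local asymptotic analysis of the Weierstrass data near each planar puncture together with the embeddedness of $M$, beyond the brief sketches above.
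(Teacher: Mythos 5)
Your proposal follows the paper's own sketch essentially step for step: the same normalization, the same appeal to Collin's theorem, Theorem~\ref{thmckmr}, Theorem~\ref{thm5.5}, the Half-space Theorem and the parabolicity result of~\cite{ckmr1}, the same compactification of $M(+)$ with $x_3=-\lambda\ln|z|+c$, and the standard residue/multiplicity analysis at the punctures that the paper compresses into ``one deduces easily.'' The only point where you substitute your own heuristic for the paper's (which defers the full details to Theorem~1 of~\cite{mpr3}) is the alternation of zeros and poles in item~4: rather than a maximum-principle comparison of consecutive ends, the clean argument is that the winding number of $g$ along every compact horizontal section equals $\pm 1$, so zeros and poles of $g$ must balance in every slab bounded by non-end heights, forcing them to alternate.
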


The description in Theorem~\ref{thm5.6} allows us to define globally the Shiffman function on
any surface $M$ as in that theorem.
To continue our study of properties of such a surface, we need the notion of {\it flux}.
The flux vector along a closed curve $\gamma \subset M$ is defined as
\begin{equation}
\label{eq:flux} F( \gamma) = \int_\gamma \mbox{Rot}_{90^\circ }(\g ')
= \mbox{Im} \int _{\g }\left( \frac{1}{2}\left(
\frac{1}{g}-g\right), \frac{i}{2}\left( \frac{1}{g}+g\right)
,1\right) \phi _3,
\end{equation}
where $(g,\phi _3)$ is the  Weierstrass data of $M$ and $\mbox{Rot}_{90^\circ }$ denotes
the rotation by angle $\pi /2$ in the tangent plane of $M$ at any point. It is easy to
show that $F(\g )$ only depends of the homology class of $\g $ in $M$, and that the
flux along a closed curve that encloses a planar end of $M$ is zero. In particular,
for a surface $M$ as in Theorem~\ref{thm5.6}, the only flux vector to consider is that associated
to any compact horizontal section $M\cap \{ x_3=\mbox{constant}\} $, which we will
denote by $F(M)$. Note that by item~(3) of Theorem~\ref{thm5.6}, the third component of $F(M)$ is 1.
In the sequel, we will assume the following normalization
for $M$ after possibly a rotation in $\R^3$ around the $x_3$-axis:
\begin{equation}
\label{star}
F(M)=(h,0,1) \ \mbox{ for some }h\geq 0.
\end{equation}
The next result collects some more subtle properties of a surface $M$ as in Theorem~\ref{thm5.6}
(this is the second part of Theorem~1 in~\cite{mpr3}).
\begin{theorem}
\label{thm5.7}
For a surface $M$ normalized as in (\ref{star}), we have:
\begin{enumerate}
 \setcounter{enumi}{4}
 \item The flux vector $F(M)$ of $M$ along a compact horizontal section
has nonzero horizontal component; equivalently, $F(M)=(h,0,1)$ for some $h>0$.
\item The Gaussian  curvature of $M$ is bounded and the vertical spacings
between consecutive planar ends are bounded from above and below
by positive constants, with all these constants depending only on $h$.
\item For every divergent sequence $\{ z_k\} _k\subset \C /\langle i\rangle $,
there exists a subsequence of the meromorphic functions $g_k(z)=g(z+z_k)$
which converges uniformly on compact
subsets of $\C /\langle i\rangle $ to a non-constant meromorphic function
$g_{\infty }\colon \C /\langle i\rangle \to \C \cup \{ \infty \} $
(we will refer to this property saying
that g is quasi-periodic).
In fact, $g_{\infty }$ corresponds to the
Gauss map of a minimal surface $M_{\infty }$ satisfying the same properties and normalization
(\ref{star}) as $M$, which is the limit
 of a related subsequence of translations of $M$ by vectors whose
$x_3$-components are $\mbox{\rm Re} (z_k)$.
\end{enumerate}
\end{theorem}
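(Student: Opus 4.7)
\medskip
\noindent\textbf{Proof sketch.}

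For~(5), I would argue by contradiction via the L\'opez--Ros deformation $(g_t,\phi_3)=(tg,\phi_3)$, $t>0$. Setting $A=\int_\gamma \phi_3/g$ and $B=\int_\gamma g\,\phi_3$ along a compact horizontal cycle $\gamma$, the real periods of the deformed Weierstrass data depend on $t$ as $\mbox{Re}(A)/t-t\,\mbox{Re}(B)$ and $-\mbox{Im}(A)/t-t\,\mbox{Im}(B)$; using $F(M)=(h,0,1)$ these vanish for every $t>0$ precisely when $h=0$. Under $h=0$ one therefore obtains a smooth one-parameter family of complete minimal immersions $X_t\colon M\to\R^3$ sharing the conformal structure, the height differential, and the asymptotic planar-end behaviour (since the double zeros and poles of $g_t$ coincide with those of $g$). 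A maximum principle argument, in the style of L\'opez--Ros and Meeks--P\'erez--Ros, propagates embeddedness of $X_1$ to every $X_t$; letting $t\to 0$ or $t\to\infty$ forces degeneration incompatible with the presence of infinitely many planar ends, so $h>0$.

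For~(6), the plan is a blow-up argument anchored on~(5). If $|K_M|$ were unbounded, pick $p_n\in M$ with $|K_M(p_n)|\to\infty$, set $\lambda_n=|K_M(p_n)|^{1/2}$, and consider the rescalings $\Sigma_n=\lambda_n(M-p_n)$. Combined with Theorem~\ref{thm5.5} and the Colding--Minicozzi one-sided curvature estimate, these rescalings are locally simply connected near the origin, so a subsequence converges smoothly to a nonflat properly embedded minimal surface $M_\infty$ with $|K_{M_\infty}|(0)=1$. But the flux of $\Sigma_n$ along a horizontal section scales as $\lambda_n^{-1}(h,0,1)\to 0$, so $M_\infty$ has vanishing horizontal flux along the limit cycle; applying~(5) to $M_\infty$ forces it to be a plane or a catenoid, neither of which realises $|K|=1$ at the blow-up centre. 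This contradiction yields a uniform bound on $|K_M|$. The same compactness-plus-flux scheme, combined with the catenoidal asymptotic model of planar annular ends from Collin's theorem, produces lower and upper bounds on vertical spacings between consecutive planar ends with constants depending only on $h$, by scale invariance.

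For~(7), with~(6) in place, quasi-periodicity reduces to a normal family argument. The translated surfaces $M_k=M-(0,0,\mbox{Re}(z_k))$ share the flux $(h,0,1)$, the curvature bound from~(6), and the end-spacing bounds, so on every compact subset of $\R^3$ they form a uniformly smooth family of properly embedded minimal surfaces whose planar ends lie at translated heights. Passing to a smooth subsequential limit $M_\infty\in {\cal P}$, the lower spacing bound prevents pairs of planar ends from coalescing, and the Weierstrass data converges: $g_k(z)=g(z+z_k)$ tends to a meromorphic $g_\infty\colon\C/\langle i\rangle\to\C\cup\{\infty\}$ with double zeros and poles at the translated end locations, necessarily nonconstant because $M_\infty$ is nonflat. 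The main obstacle is the curvature bound in~(6): establishing the local simple connectivity of the rescalings $\Sigma_n$ is delicate because $M$ has infinite topology and two limit ends, so the Colding--Minicozzi picture must be carefully combined with the horizontal-slab decomposition from Theorem~\ref{thm5.5} in order to guarantee that the rescaled limit is a nonflat properly embedded surface with enough inherited structure for the flux-vanishing reduction to~(5) to close the argument.
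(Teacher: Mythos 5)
Your treatments of items (5) and (7) are in line with the paper's sketch: the paper also obtains (5) via the L\'opez--Ros deformation (which is applicable precisely when the horizontal flux vanishes, and then forces the absence of planar ends, contradicting the infinitely many middle ends), and it also derives (7) as a normal-family/compactness consequence of the curvature bound and the spacing bounds from (6).

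The argument you give for the curvature bound in (6), however, has a genuine gap. First, the scaling is backwards: with $\l _n=|K_M(p_n)|^{1/2}\to \infty $, the flux of $\Sigma _n=\l _n(M-p_n)$ along (the image of) a horizontal section is $\l _n(h,0,1)$, which diverges rather than tends to $0$; moreover the horizontal sections of $M$, once expanded by $\l _n$, escape every compact set, so there is no ``limit cycle'' on $M_{\infty }$ to which a flux could be assigned. Second, item (5) is not a classification statement and does not apply to the blow-up limit $M_{\infty }$, which need not lie in ${\cal P}$ with infinitely many ends and the normalization (\ref{star}); in fact the natural blow-up limit here is a vertical \emph{helicoid} (via the uniqueness of the helicoid from Section~\ref{sec5.2}), which is simply connected, carries no nontrivial flux, and is perfectly compatible with $|K|=1$ at the origin --- so your intended contradiction does not close. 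The paper's route is different: after producing the helicoidal blow-up, it invokes the Limit Lamination Theorem for Planar Domains to find a \emph{second} scale $\mu _k$ at which $\mu _kM_k$ converges to a foliation of $\rth $ by horizontal planes with singular set two vertical lines $\G \cup \G '$ a positive distance apart; the flux $F(\mu _kM_k)=\mu _k(h_k,0,1)$ (up to a vertical rotation) then converges to twice the nonzero horizontal vector joining $\G $ and $\G '$, forcing $\mu _k\to 0$ and hence $h_k\to \infty $, which contradicts the assumed upper bound on $h$ and simultaneously yields the claimed dependence of the curvature bound on $h$ alone. You would need to replace your flux-vanishing reduction by an argument of this type (or some other mechanism that converts the curvature blow-up into unboundedness of $h$).
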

As said above, the proof of properties 5, 6 and 7 are more delicate than the ones in Theorem~\ref{thm5.6}
as they depend on Colding-Minicozzi theory. For instance, the fact that the Gaussian curvature
$K_M$ of $M$ is bounded with the bound depending only on an upper bound of the horizontal
component of $F(M)$, was proven in Theorem~5 of~\cite{mpr3} in the more general
case of a sequence $\{ M_k\} _k\subset {\cal P}$ as in Theorem~\ref{thm5.6}, such that
$F(M_k)=(h_k,0,1)$ and $\{ h_k\} _k$ is bounded from above. This proof
of the existence of a uniform curvature estimate is by contradiction: the existence
of a sequence $p_k\in M_k$ such that
$|K_{M_k}|(p_k)\to \infty $ creates a nonflat blow-up limit of the $M_k$
around $p_k$ with can be proven to be a vertical helicoid (this uses the uniqueness of the helicoid
among properly embedded, simply connected, nonflat minimal surfaces, see Section~\ref{sec5.2}).
A careful application of the
so called {\it Limit Lamination Theorem for Planar Domains} (Theorem~0.9 in Colding and
Minicozzi~\cite{cm25}) produces a sequence $\mu _k>0$ so that after possibly a sequence of
translations and rotations around a vertical axis, $\{ \mu _kM_k\} _k$ converges
to a foliation of $\R^3$ by horizontal planes with singular set of convergence consisting of two vertical
lines $\G \cup \G'$ separated by a positive distance.
From here one can produce a nontrivial closed curve in each $\mu _kM_k$ such that the flux vector
$F(\mu _kM_k)$ converges as $k\to \infty $ to twice the horizontal vector that joins $\G $ and $\G '$. Since
the angle between $F(M_k)$ and its horizontal projection $(h_k,0,0)$
is invariant under translations, homotheties and rotations around the $x_3$-axis, then
we contradict that $h_k$ is bounded from above.

The proof that there is a lower bound of the vertical spacings between consecutive planar ends
in item~6 of Theorem~\ref{thm5.7} follows from that fact that the boundedness of $K_M$
implies the existence of an embedded regular neighborhood of
constant radius (Meeks and Rosenberg~\cite{mr1}). The bound from above of the
same vertical spacing is again proved by contradiction, by a clever application of the
L\'opez-Ros deformation argument (see Section~\ref{sec5.1}), which also gives property~5 of Theorem~\ref{thm5.7}.
Finally, the proof of the compactness result in item~7
of Theorem~\ref{thm5.7} is essentially a consequence of the already proven
uniform bound of the Gaussian curvatures and the uniform local bounds for the area of a sequence
of translations of the surface $M$ given by item~6 of the same theorem. This completes our sketch
of proof of Theorem~\ref{thm5.7}.

As explained above, in our setting for $M\in {\cal P}$ satisfying the normalizations
in Theorems~\ref{thm5.6} and~\ref{thm5.7}, we can consider its Shiffman function $S_M$ defined
by equation (\ref{eq:Shiffman}), which is also defined on the conformal compactification of $M$
(recall that in Section~\ref{subsec:shiffman} we normalized the
height differential $\phi _3$ to be $dz$, as in Theorem~\ref{thm5.6}).
Recall also that the vanishing of the Shiffman function
is equivalent to the fact that $M$ is a Riemann minimal example. But instead of proving
directly that $S_M=0$ on $M$,
Meeks, P\'erez and Ros demonstrated that $S_M$ is a linear Jacobi function; to understand this concept, we must
first recall some basic facts about Jacobi functions on a minimal surface.

Since minimal surfaces can be viewed as critical points for the area functional $A$,
the nullity of the hessian of $A$ at a minimal surface $M$ contains
valuable information about the geometry of $M$. Normal variational fields for $M$ can be identified
with functions, and the second variation of area tells us that the functions in the
nullity of the hessian of $A$ coincide with the kernel of the {\it Jacobi operator}, which is the
Schr\"{o}dinger operator on $M$ given by
\begin{equation}
\label{eq:Jacobi}
L=\Delta - 2K_M,
\end{equation}
where $\Delta $ denotes the intrinsic Laplacian on $M$. Any function $v\in C^{\infty }(M)$
satisfying $\Delta v - 2K_Mv=0$ on $M$ is called a {\it Jacobi function,} and corresponds to an
{\it infinitesimal} deformation of $M$ by minimal surfaces. It turns out that the
Shiffman function $S_M$ is a Jacobi function, i.e., it satisfies (\ref{eq:Jacobi}) (this is
general for any minimal surface whenever $S_M$ is well-defined,
and follows by direct computation from (\ref{eq:Shiffman})).
One obvious way to produce Jacobi fields is to take the normal part of
the variational field of the variation of $M$ by moving it through a 1-parameter family of isometries
of $\R^3$. For instance, the translations $M+ta$ with $a\in \R^3$, produce the Jacobi function
$\langle N,a\rangle $ (here $N$ is the Gauss map of $M$), which is called {\it linear Jacobi function.}
One key step in the proof of Meeks, P\'erez and Ros is the following one.

\begin{proposition}
\label{prop5.8}
  Let $M\in {\cal P}$ be a surface with infinitely many ends and satisfying
the normalizations in Theorems~\ref{thm5.6} and~\ref{thm5.7}. If the Shiffman Jacobi function $S_M$
of $M$ is linear, i.e., $S_M=\langle N,a\rangle $ for some $a\in \R^3$, then $M$ is a Riemann minimal example.
\end{proposition}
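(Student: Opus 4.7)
The plan is to show directly from the identity $S_M=\langle N,a\rangle$ that the Weierstrass Gauss map $g$ of $M$ is of Riemann type, using an end-asymptotic analysis followed by the integrable systems machinery developed in~\cite{mpr6}. I would use the normalizations of Theorems~\ref{thm5.6}--\ref{thm5.7}: $\phi_3=dz$ on the cylinder $\C/\langle i\rangle$ with $g$ meromorphic having double zeros at the $p_j$ and double poles at the $q_j$, and write $\langle N,a\rangle=(2\,\mbox{Re}(\bar g(a_1+ia_2))+a_3(|g|^2-1))/(1+|g|^2)$.

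First I analyze the planar ends. By the stereographic formula, $\langle N,a\rangle$ extends continuously across the compactification with limits $-a_3$ at each $p_j$ and $+a_3$ at each $q_j$. A Laurent expansion of (\ref{eq:Shiffman}) near such a double zero of $g$ shows that the candidate order-two poles $6/(z-p_j)^2$ coming from $\tfrac{3}{2}(g'/g)^2$, $2/(z-p_j)^2$ from $g''/g$, and $4/(z-p_j)^2$ from $(g'/g)^2/(1+|g|^2)$ cancel exactly in the combination $\tfrac{3}{2}(g'/g)^2-g''/g-(g'/g)^2/(1+|g|^2)$, so $S_M$ extends continuously across the planar ends with boundary values determined by lower-order Taylor coefficients of $g$. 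Matching the two sides of the hypothesis forces $a_3=0$, so $a=(a_1,a_2,0)$ is horizontal.

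Next I interpret $S_M$ as the infinitesimal generator of the Shiffman deformation of $M$ through minimal immersions, and $\langle N,a\rangle$ as the generator of the rigid-motion deformation by translation by $a$. Since horizontal translations act trivially on the Weierstrass pair $(g,\phi_3)$, the identity $S_M=\langle N,a\rangle$ is equivalent to the statement that $M$ is a critical point of the Shiffman flow on the moduli space of minimal planar domains in ${\cal P}$ modulo rigid motions. The quasi-periodicity of $g$ from Theorem~\ref{thm5.7}(7), combined with the KdV hierarchy structure of the Shiffman flow established in~\cite{mpr6}, classifies the critical points of this flow (satisfying the normalizations of Theorems~\ref{thm5.6}--\ref{thm5.7}) as precisely the rigid-motion orbit of the Riemann minimal examples $\{R_\l\}$. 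Therefore $M$ is congruent to some $R_\l$. The main obstacle is this last classification step: showing that equality of the Shiffman and translation tangent vectors at $M$ propagates, via the commutativity of the KdV hierarchy flows and the quasi-periodicity of $g$, to force the spectral curve associated to the integrable system to be rectangular elliptic---the algebraic data underlying the Weierstrass parameterization of Riemann examples given in Section~\ref{subsec:shiffman}, whose control requires the algebraic geometry of spectral curves and the classical finite-gap theory invoked in~\cite{mpr6}.
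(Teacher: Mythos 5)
Your proposal does not close the argument: the decisive step---``the KdV hierarchy structure of the Shiffman flow \ldots classifies the critical points of this flow \ldots as precisely the rigid-motion orbit of the Riemann minimal examples''---is asserted rather than proved, and no such classification is available to quote. In the logical architecture of~\cite{mpr6} (and of Section~\ref{MPR}), the KdV/finite-gap machinery is used for a different purpose: to show that $u=-\tfrac{3(g')^2}{4g^2}+\tfrac{g''}{2g}$ is an algebro-geometric potential and hence that $S_M$ can be \emph{holomorphically integrated} (Theorem~\ref{thm5.9}, steps (T1)--(T2)); that integration is then used to prove that $S_M$ is \emph{linear} (Proposition~\ref{propult}). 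Proposition~\ref{prop5.8} is precisely the remaining step that converts linearity of $S_M$ into the conclusion that $M$ is a Riemann example, so invoking a ``classification of critical points of the Shiffman flow'' at this stage is essentially restating the proposition and begs the question. You identify this as ``the main obstacle,'' which is accurate, but it means the proof is missing its core.

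The paper's own proof is much more elementary and entirely avoids the integrable-systems input. One first observes that if $S_M=\langle N,a\rangle$ is linear then so is its conjugate Jacobi function $S_M^*$ (via the Montiel--Ros correspondence), so that $S_M+iS_M^*=\langle N,a\rangle$ for some $a\in\C^3$. Substituting the explicit expression for $S_M+iS_M^*$ coming from (\ref{eq:Shiffman}) yields the identity (\ref{eq:gOGE}), in which $\overline{g}$ multiplies a meromorphic expression and the other side is meromorphic; since $g$ is nonconstant, both the bracketed factor and the right-hand side must vanish identically. Eliminating $g''$ between these two equations gives the first-order algebraic ODE $(g')^2=g(-Ag^2+2a_3g+B)$, which forces the Weierstrass data $(g,dz)$ to factor through a twice-punctured torus; hence $M$ is periodic, and the classification of periodic genus-zero examples in~\cite{mpr1} finishes the proof. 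Two further remarks on your first step: the Laurent computation at a double zero of $g$ only cancels the order-two poles of the three terms in (\ref{eq:Shiffman}); you must also control the simple-pole terms before claiming a continuous extension. And even granting the extension, matching $S_M(p_j)$ with $-a_3$ gives one scalar equation per end, not the conclusion $a_3=0$; in the paper's argument $a_3$ is carried along as a free constant and no such vanishing is needed.
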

The proof of Proposition~\ref{prop5.8} goes as follows. We first pass from real valued
Jacobi functions to complex valued ones by means of
the {\it conjugate of a Jacobi function.} The conjugate function of a Jacobi
function $u$ over a minimal surface $M$ is the (locally defined)
support function $u^*=\langle (X_u)^*,N\rangle $
of the conjugate surface\footnote{The conjugate surface of a minimal
surface is that one whose coordinate functions are harmonic conjugates of the coordinate
functions of the original minimal surface; the conjugate surface is only locally defined,
and it is always minimal.}  $(X_u)^*$ of the
branched minimal surface $X_u$ associated to $u$ by the so-called Montiel-Ros
correspondence~\cite{mro1}; in particular, both $X_u$ and $(X_u)^*$ have the same Gauss map $N$
as $M$, and $u^*$ also satisfies the Jacobi equation. Now suppose $M\in {\cal P}$ is an Proposition~\ref{prop5.8},
i.e., $S_M$ is linear. It is then easy to show that the conjugate Jacobi function $(S_M)^*$
of $S_M$, is also linear, from where $S_M+iS_M^*=\langle N,a\rangle $ for some $a\in \C^3$, which again by
(\ref{eq:Shiffman}), produces a complex ODE for $g$ of second order, namely
\begin{equation}
\label{eq:gOGE}
\overline{g}\left( \frac{3}{2}\frac{(g')^2}{g}-g''-B-a_3g\right) =
\frac{g''}{g}-\frac{1}{2}\left( \frac{g'}{g}\right) ^2+Ag-a_3,
\end{equation}
for some complex constants $A,B,a_3$ that only depend of $a$.
As $g$ is holomorphic and not constant, (\ref{eq:gOGE}) implies that both its
right-hand side and the expression between parenthesis in its left-hand side
vanish identically. Solving for $g''$ in both equations, one arrives to the following
complex ODE of first order:
\[
(g')^2 =g(-Ag^2 +2a_3 g+B),
 \]
which in turn says that the Weierstrass data $(g,\phi _3=dz)$ of $M$ factorizes through
the torus $\Sigma =\{ (\xi ,w)\in (\C\cup \{ \infty \} )^2\ | \ w^2=\xi(-A\xi^2+2a_3\xi+
B)\} $; in other words, we deduce that $M$ is in fact periodic under a translation,
with a quotient being a twice punctured torus. In this very particular situation, one can
apply the classification of periodic examples by Meeks, P\'erez and Ros in~\cite{mpr1} to conclude
that $M$ is a Riemann minimal example, and the proposition is proved.

In light of Proposition~\ref{prop5.8}, one way of finishing our classification problem
consists of proving the following statement, which will be proved assuming that Theorem~\ref{thm5.9}
stated immediately after it holds; the proof of Theorem~\ref{thm5.9} will be sketched later.
\begin{proposition}
\label{propult}
For every $M\in {\cal P}$ with infinitely many ends and satisfying
the normalizations in Theorems~\ref{thm5.6}, \ref{thm5.7} and in (\ref{star}),
the Shiffman Jacobi function $S_M$ of $M$ is linear.
\end{proposition}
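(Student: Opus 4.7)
The plan is to exhibit $S_M$ as the initial velocity of a smooth one-parameter family of minimal surfaces lying in the normalized moduli space ${\cal P}_{\infty}\subset {\cal P}$ fixed by the proposition, and then to combine the compactness of that moduli space (provided by items 6 and 7 of Theorem~\ref{thm5.7}) with integrable-system techniques to conclude that $S_M$ must be linear. The argument splits naturally into four steps.

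\textbf{Step 1 (Integrating the Shiffman vector field).} Using the conformal model of $M$ by $(\C /\langle i\rangle )-\{p_j,q_j\}$ from Theorem~\ref{thm5.6}, I would first show that $S_M$ is the infinitesimal generator of a smooth flow $t\mapsto M(t)\in {\cal P}_{\infty}$ preserving both the Riemann surface structure and the height differential $\phi_3=dz$ of $M$. This reduces to solving a third-order evolution equation
\[
\partial_t g = P(g,g',g'',g''')
\]
for the meromorphic Gauss map, whose normal component at $t=0$ is exactly $S_M N$; the explicit expression for $P$ is determined by the formula (\ref{eq:Shiffman}) for $S_M$. Boundedness of $S_M$, a consequence of the uniform curvature bound of item~6 of Theorem~\ref{thm5.7}, together with the quasi-periodicity of $g$ from item~7, yields the local existence and the uniform estimates required to propagate the flow.

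\textbf{Step 2 (Preservation of the normalized class).} Next I would check that $M(t)$ remains in ${\cal P}_{\infty}$ with the normalization (\ref{star}) throughout the flow. The double zeros and poles of $g$ at the ends $p_j$ and $q_j$ are preserved because $P$ acts compatibly with the Laurent structure at these points; the flux vector $(h,0,1)$ is conserved since $\phi_3=dz$ is fixed and the variation of the horizontal flux components is the integral of an exact form along a compact horizontal section. Embeddedness and properness are propagated by an open--closed argument in $t\in\R$, using the maximum principle to prevent the birth of self-intersections and using the uniform curvature and spacing bounds of item~6 combined with the quasi-periodic compactness of item~7 to pass to limits.

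\textbf{Step 3 (Finite-gap reduction via the KdV hierarchy).} After a suitable Miura-type change of variable, the Shiffman evolution becomes the first nontrivial flow of the Korteweg--de Vries hierarchy invoked in the abstract. This hierarchy carries an infinite family of commuting flows $\partial_{t_n}g$, each of which preserves ${\cal P}_{\infty}$ by the same argument as in Step~2. Hence the joint orbit of $g$ lies in a compact invariant subset of this commuting $\R^{\infty}$-action, and standard algebro-geometric integrability theory (Krichever, Gelfand--Dickey) forces the span of $\{\partial_{t_n}g|_{t=0}\}_n$ to be finite-dimensional. Consequently $g$ is a \emph{finite-gap} (algebro-geometric) solution, parameterized by theta functions on the Jacobian of a compact hyperelliptic spectral curve $\Sigma_g$.

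\textbf{Step 4 (Low-genus spectral curve forces linearity).} The final and hardest step is to show that the topological condition of being a planar domain and the flux normalization (\ref{star}) force the genus of $\Sigma_g$ to be at most one. I expect most of the technical work to be concentrated here, as a priori no obvious bound on the genus is available; the planar-domain condition must be combined with the structure of the ends at $p_j$ and $q_j$ to rule out higher-genus spectral data. Once $\Sigma_g$ has genus at most one, the Gauss map satisfies the first-order algebraic ODE $(g')^2 = g(-Ag^2+2a_3 g+B)$ for constants $A,B,a_3\in\C$, which is precisely the relation derived in the proof of Proposition~\ref{prop5.8} from the assumption $S_M+iS_M^*=\langle N,a\rangle$. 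Running that argument in reverse shows that $S_M$ is a linear Jacobi function, and Proposition~\ref{prop5.8} then identifies $M$ with a Riemann minimal example, concluding the proof.
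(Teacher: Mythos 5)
Your Steps 1--3 essentially reconstruct the content of Theorem~\ref{thm5.9} (the holomorphic integrability of the Shiffman function via the KdV hierarchy and the finite dimensionality of the space of bounded Jacobi functions), which the paper's proof of this proposition is explicitly permitted to assume. Be careful in Step 1, though: boundedness of $S_M$ and quasi-periodicity of $g$ do \emph{not} by themselves give existence of a flow of \emph{meromorphic} Gauss maps on all of $\C/\langle i\rangle$ with the correct double zeros and poles; the paper stresses that naive local solutions of $\partial_t g=P(g,g',g'',g''')$ need not extend to the whole cylinder, can develop essential singularities, and need not preserve the pole structure, which is exactly why the algebro-geometric machinery must come before, not after, the construction of the flow. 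Also, the deformation parameter must be taken complex, $t\in\D(\ve)$, rather than real as in your open--closed argument in $t\in\R$; this is not cosmetic, as the next paragraph explains.

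The genuine gap is Step 4. You correctly identify that everything reduces to showing the spectral data has low genus (equivalently, that $g$ satisfies $(g')^2=g(-Ag^2+2a_3g+B)$), but you offer no argument for this, and the paper does not obtain it by confronting the planar-domain topology with the spectral curve. Instead it runs an extremal argument: fix the flux $F=(h,0,1)$ as in (\ref{star}); use the compactness in items 6 and 7 of Theorem~\ref{thm5.7} to produce a surface $M_{\max}\in{\cal P}_F$ maximizing the spacing between consecutive planar ends; the holomorphic Shiffman deformation $t\in\D(\ve)\mapsto M_t$ of $M_{\max}$ provided by Theorem~\ref{thm5.9} stays in ${\cal P}_F$ (period problem and flux are preserved, embeddedness propagates by the maximum principle), so the spacing, which is a \emph{harmonic} function of the complex parameter $t$ by item 4 of Theorem~\ref{thm5.6}, attains an interior maximum at $t=0$ and is therefore constant; hence $t\mapsto M_t$ is a family of translations and its initial velocity $S_{M_{\max}}$ is linear. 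Proposition~\ref{prop5.8} then makes $M_{\max}$ a Riemann minimal example; the same applies to a minimizer of the spacing, and since there is only one Riemann example for each flux, maximizer and minimizer coincide, forcing every $M\in{\cal P}_F$ to be extremal and hence to have linear Shiffman function. This maximum-principle-on-the-spacing argument is the key idea missing from your proposal; without it, or a genuine genus bound on the spectral curve (which you do not supply), the proof does not close.
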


\begin{theorem}[Theorem~5.14 in~\cite{mpr6}]
\label{thm5.9}
Given a surface $M\in {\cal P}$ with infinitely many ends and satisfying
the normalizations in Theorems~\ref{thm5.6} and~\ref{thm5.7},
there exists a 1-parameter family $\{ M_t\} _t\subset {\mathcal P}$
such that $M_0=M$ and the normal part of the variational field for this variation,
when restricted to each $M_t$, is the Shiffman function
$S_{M_{t}}$ multiplied by the unit normal vector field to $M_t$.
\end{theorem}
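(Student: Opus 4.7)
The plan is to construct the deformation at the level of Weierstrass data. By Theorems~\ref{thm5.6} and~\ref{thm5.7}, $M$ is parameterized by $\C/\langle i\rangle$ minus a discrete set, with height differential $\phi_3=dz$ and meromorphic Gauss map $g$ whose only zeros and poles are double zeros and double poles at the planar ends. Throughout the sought deformation I would keep the conformal type and $\phi_3=dz$ fixed, and evolve only $g=g(z,t)$. Imposing that the normal component of $\partial_t X_t$ equal $S_{M_t}$ forces a concrete nonlinear PDE of the shape
\[
\partial_t g \;=\; \mathcal{F}(g,g',g''),
\]
obtained by plugging the explicit formula (\ref{eq:Shiffman}) for $S_M$ into the standard correspondence between an infinitesimal $\phi_3$-preserving Weierstrass deformation and its normal variational field.

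The key step is to recognize this PDE as a flow in an integrable hierarchy. Introducing the variable $u=(\log g)'$ (equivalently, the Schwarzian derivative of $g$), one checks that the evolution becomes, via a Miura-type change of variables, the first nontrivial member of the mKdV/KdV hierarchy applied to a potential extracted from $g$. This structural identification is the crux of the proof, and has two decisive consequences. First, the flow admits a Lax pair and therefore preserves a polynomial spectral curve $\Sigma$ attached to $(g,dz)$; in particular all spectral invariants of $M$, including the horizontal flux $(h,0,1)$, the vertical spacings of the planar ends, and the uniform curvature bound of Theorem~\ref{thm5.7}, are constants of motion. Second, the quasi-periodicity of $g$ from item~(7) of Theorem~\ref{thm5.7} identifies $M$ with an algebro-geometric (finite-gap) solution, so $g(z,t)$ can in principle be written as a ratio of theta functions on the Jacobian of $\Sigma$; the Shiffman flow then acts by a straight-line translation on $\mathrm{Jac}(\Sigma)$ and is therefore defined for all $t\in\R$.

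It remains to verify that the resulting 1-parameter family of Weierstrass data integrates to a family $\{M_t\}\subset\mathcal{P}$. Single-valuedness of the immersion $X_t\colon \C/\langle i\rangle\setminus\{\mbox{ends}\}\to\R^3$ amounts to the vanishing of the periods of $\phi_1^t,\phi_2^t$ computed from $(g(\cdot,t),dz)$ via (\ref{eq:phij}); these periods are determined by the preserved spectral data and, vanishing at $t=0$, vanish for all $t$. Properness, embeddedness, horizontal limit tangent plane at infinity, and infinitude of ends are open conditions, which I propagate in $t$ using the uniform curvature and end-spacing estimates of Theorem~\ref{thm5.7} together with the maximum principle, in the spirit of the L\'opez--Ros argument recalled in Section~\ref{sec5.1}.

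The hardest step will be the identification of the Shiffman flow with a KdV-type flow and the deployment of the associated algebro-geometric machinery (spectral curve, Lax pair, Jacobian linearization). It is exactly this integrable structure that converts an a priori only locally-defined nonlinear evolution into a globally defined flow, preserves the delicate quantitative estimates needed to stay inside $\mathcal{P}$, and explains why the deformation closes up into a well-defined family of properly embedded minimal planar domains in $\R^3$.
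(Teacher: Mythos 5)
Your overall route is the one the paper takes: fix the conformal type and $\phi_3=dz$, evolve only $g$, compute that the Shiffman field corresponds to the evolution $\dot g_S=\frac{i}{2}\bigl(g'''-3\frac{g'g''}{g}+\frac{3}{2}\frac{(g')^3}{g^2}\bigr)$, pass through $x=g'/g$ and a Miura transformation to the KdV equation, and invoke the algebro-geometric (finite-gap) theory to integrate the Cauchy problem globally and meromorphically on $\C/\langle i\rangle$, before checking periods, flux and embeddedness. However, there is a genuine gap at the crux of the argument: you assert that ``the quasi-periodicity of $g$ from item~(7) of Theorem~\ref{thm5.7} identifies $M$ with an algebro-geometric (finite-gap) solution.'' Quasi-periodicity (even periodicity) of a potential does \emph{not} imply that it is finite-gap; generic quasi-periodic potentials have infinitely many gaps, and the entire algebro-geometric machinery you want to deploy (spectral curve of finite genus, Lax pair, linearization on a Jacobian) is unavailable without first proving that $u=-\frac{3(g')^2}{4g^2}+\frac{g''}{2g}$ is an algebro-geometric potential. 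The paper's actual argument for this is the decisive geometric input you are missing: each flow $\partial u/\partial t_n$ of the KdV hierarchy induces, in the same way the first flow produces $S_M+iS_M^*$, a \emph{bounded} complex-valued Jacobi function $v_n$ on $\C/\langle i\rangle$; since the Jacobi operator is a Schr\"odinger operator whose potential is bounded (because $K_M$ is bounded, item~6 of Theorem~\ref{thm5.7}), its kernel restricted to bounded functions is finite dimensional; hence the $v_n$ are linearly dependent, i.e., some flow is a linear combination of lower-order flows, which is precisely the definition of an algebro-geometric potential. Without this step your proof does not get off the ground.

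Two secondary points. First, your claim that the vertical spacings between planar ends are ``constants of motion'' of the flow is too strong and in fact inconsistent with how the deformation is used afterwards: in the proof of Proposition~\ref{propult} the spacing is only a \emph{harmonic} function of the complex parameter $t$, and its constancy is deduced only at a maximizer via the maximum principle; if it were automatically constant, that argument (and the whole maximization scheme) would be superfluous, and indeed it would wrongly force every $M$ to have linear Shiffman function directly. Second, the theorem only requires the family for $t$ in a small disk $\D(\ve)$; claiming global existence for all $t\in\R$ via straight-line motion on $\mathrm{Jac}(\Sigma)$ is more than is needed and, again, presupposes the finite-gap structure you have not established. (Also, $(\log g)'=g'/g$ is not the Schwarzian derivative of $g$; the correct substitution is $x=g'/g$ followed by the Miura map $u=\frac{1}{2}x'-\frac{1}{4}x^2$.)
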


Before proving Proposition~\ref{propult}, some explanation about the integration of the Shiffman function $S_M$ appearing in
Theorem~\ref{thm5.9} is in order. As we explained in the paragraph just before the statement
of Proposition~\ref{prop5.8}, $S_M$ corresponds to an {\it infinitesimal} deformation of $M$
by minimal surfaces (every Jacobi function has this property). But this is very different from the
quite strong property of proving that $S_M$ can be integrated to an actual
variation $t\mapsto M_t\in {\cal P}$, as stated in Theorem~\ref{thm5.9}.
Even more, the parameter $t$ of this deformation can be proven to extend to be
a complex number in $\D (\ve )=\{ t\in \C \ | \ |t|<\ve \} $ for some $\ve >0$,
and $t\in \D (\ve )\mapsto M_t$ can be viewed as the real part of a
complex valued {\it holomorphic} curve in a certain complex variety. This is a very
special integration property for $S_M$, which we refer to by saying that
the Shiffman function can be {\it holomorphically integrated} for every surface
$M$ as in Theorem~\ref{thm5.9}.

We next give a sketch of the proof of Proposition~\ref{propult}, assuming the validity of Theorem~\ref{thm5.9}.
One fixes a flux vector $F=(h,0,1)$, $h>0$, consider the set
\[
{\cal P}_F=\{ M\in {\cal P} \mbox{ as in Theorems~\ref{thm5.6} and \ref{thm5.7} } \ | \ F(M)=F\}
\]
and maximize the spacing between the planar ends of surfaces in ${\cal P}_F$
(to do this one needs to be careful when specifying what planar ends are compared when measuring
distances; we will not enter in technical details here), which can be done by the compactness property given
in item~6 of Theorem~\ref{thm5.7}. Then one proves that any maximizer (not necessarily unique a priori)
$M_{\max }\in {\cal P}_F$ must have linear Shiffman function; the argument for this claim has two steps:
\begin{enumerate}[(S1)]
\item The assumed homomorphic integration of the Shiffman function of
$M_{\max }$ produces a  complex holomorphic
curve $t\in \D (\ve )\mapsto g_t\in {\cal W}$, with $g_0=g_{\max }$ being the Gauss map
of $M_{\max }$, where ${\mathcal W}$ is the complex manifold  of
quasi-periodic meromorphic functions on $\C /\langle i\rangle $ (in the sense
explained in item~7 of Theorem~\ref{thm5.7}) with double zeros and double poles; 
${\cal W}$ can be identified to the set of potential Weierstrass data of minimal immersions $(g,\phi _3=dz)$
with infinitely many planar ends. The fact that the period problem associated to $(g_t,dz)$ is
solved for any $t$ comes from the fact that for $t=0$, this period problem is solved (as $M_{\max }$ is a genuine
surface) and that the velocity vector of $t\mapsto g_t$ is the Shiffman
function at any value of $t$, which lies in the kernel of the period map. A similar argument shows
that not only $(g_t,dz)$ solve the period problem, but also the flux vector $F(M_t)$ is independent of
$t$, where $M_t$ is the minimal surface produced by the Weierstrass data $(g_t,dz)$ (thus $M_0=M_{\max }$).
Embeddedness of
$M_t$ is guaranteed from that of $M_{\max }$, by the application of the maximum principle for minimal
surfaces. Altogether, we deduce that $t\in \D (\ve )\mapsto M_t$ actually lies in ${\cal P}_F$, which
implies that the spacing between the planar ends of $M_t$, viewed as a function of $t$, achieves
a maximum at $t=0$.

\item As the spacing between the planar ends of $M_t$ can be viewed as a harmonic function
of $t$ (this follows from item~4 of Theorem~\ref{thm5.6}), then the maximizing property of $M_{\max }$
in the family $t\in \D (\ve )\mapsto M_t$ and the maximum principle for harmonic functions
gives that the spacing between the planar ends of $M_t$
remains constant in $t$; from here it is not difficult to deduce that $t\mapsto g_t$ is just a
translation in the cylinder $\C /\langle i \rangle $ of the zeros and poles of $g_t$, which
corresponds geometrically to the fact that $t\mapsto M_t$ is a translation in $\R^3$ of $M_{\max}$. Therefore,
the velocity vector of $t\mapsto M_t$ at $t=0$, which is the Shiffman function of $M_{\max }$, is linear.
\end{enumerate}

Once it is proven that the Shiffman function of $M_{\max }$ is linear, Proposition~\ref{prop5.8} implies that
$M_{\max }$ is a Riemann minimal example. A similar reasoning can be done for a minimizer
$M_{\min }\in {\cal P}_F$ of the spacing between planar ends, hence $M_{\min }$ is also a Riemann
minimal example. As there is only one Riemann example for each flux $F$, then we
deduce that the maximizer and minimizer are the same. In particular, every surface in ${\cal P}_F$ is both a
maximizer and minimizer and, hence, its Shiffman function is linear. This finishes the sketch of proof
of Proposition~\ref{propult}.

To finish this article, we will indicate how to demonstrate that the Shiffman function of every
surface $M\in {\cal P}$ in the hypotheses of Theorem~\ref{thm5.9} can be holomorphically integrated.
This step is where the Korteweg-de Vries equation (KdV) plays a crucial role, which we will explain now.
We recommend the interested reader to consult the excellent survey by
Gesztesy and Weikard~\cite{gewe1} for an overview of the notions and properties
that we will use in the sequel.

First we explain the connection between the Shiffman function and
the KdV equation. Let $M\in {\cal P}$ be a surface satisfying the hypotheses of
Theorem~\ref{thm5.9}, and let $S_M$ be its Shiffman function, which is globally defined. Its
conjugate Jacobi function $(S_M)^*$ is also globally defined; in fact, $(S_M)^*$ is given by minus the
real part of the expression enclosed between brackets in (\ref{eq:Shiffman}).
By the Montiel-Ros correspondence~\cite{mro1},
both $S_M$, $(S_M)^*$ can be viewed as the support functions of conjugate branched minimal immersions
$X,X^*\colon M\to \R^3$ with the same Gauss map as $M$. The
holomorphicity of $X+iX^*$ allows us to identify $S_M+iS_M^*$
with an infinitesimal deformation of the Gauss map $g$ of $M$ in the space
${\mathcal W}$ of quasi-periodic meromorphic functions on $\C /\langle i\rangle $ that appears in
step (S1) above. In other words,
$S_M+iS_M^*$ can be viewed as the derivative $\dot{g}_S=\left. \frac{d}{dt}\right| _{t=0}g_t$
of a holomorphic
curve $t\in \D (\ve )=\{ t\in \C \ | \ |t|< \ve \} \mapsto g_t\in
{\mathcal W}$ with $g_0=g$, which can be explicitly computed from
(\ref{eq:Shiffman}) as
\begin{equation}
\label{gpuntodeShiffman}
\dot{g}_S=\frac{i}{2}\left( g'''-3\frac{g'g''}{g}+\frac{3}{2}\frac{(g')^3}{g^2}\right) .
\end{equation}
Therefore, to integrate $S_M$ holomorphically one needs to find a holomorphic curve $t\in \D (\ve )
\mapsto g_t\in {\mathcal W}$
with $g_0=g$, such that for all $t$, the pair $(g_t,\phi _3=dz)$ is the Weierstrass data  of a minimal
surface $M_t\in {\mathcal P}$ satisfying the conditions of Theorem~\ref{thm5.9}, and
such that for every value of $t$,
\[
\left. \frac{d}{dt}\right| _{t}g_t=\frac{i}{2}\left( g_t'''-3\frac{g_t'g_t''}{g_t}+
\frac{3}{2}\frac{(g_t')^3}{g_t^2}\right) .
\]

Viewing (\ref{gpuntodeShiffman}) as an evolution equation in complex
time $t$, one could apply general techniques to find solutions $g_t=g_t(z)$
defined locally around a point $z_0\in (\C /\langle i \rangle )-g^{-1}(\{ 0,\infty \} )$
with the initial condition $g_0=g$, but such solutions are not necessarily defined
on the whole cylinder, can develop essential singularities,
and even if they were meromorphic on $\C /\langle i\rangle $, it is not clear
{\it a priori} that they would have only double zeros and poles and other properties necessary
to give rise, via the Weierstrass representation with height differential $\phi _3=dz$,
to minimal surfaces $M_t$ in ${\mathcal P}$ with infinitely many planar ends.
Fortunately, all of these problems can be solved by arguments related to the
theory of the meromorphic KdV equation.

The change of variables
\begin{equation}
\label{u}
u =-\frac{3(g')^2}{4g^2}+\frac{g''}{2g}.
\end{equation}
transforms (\ref{gpuntodeShiffman}) into the evolution equation
\begin{equation}
\label{kdv}
\frac{\partial u}{\partial t} = -u'''-6 u u',
\end{equation}
which is the celebrated KdV equation\footnote{One can find different
normalizations of the KdV equation in the literature,
given by different coefficients for $u''', uu'$ in equation
(\ref{kdv}); all of them are equivalent up to a change of variables.}.
The apparently magical change of variables (\ref{u}) has a natural explanation:
the change of variables $x=g'/g$
transforms the expression (\ref{gpuntodeShiffman}) for $\dot{g}_S$
into the evolution equation
\[
\dot{x}=\frac{i}{2}(x'''-\frac{3}{2}x^2x'),
\]
which is called a {\it modified KdV equation} (mKdV).
It is well-known that mKdV equations in $x$ can
be transformed into KdV equations in $u$ through the so called
{\it Miura transformations,} $x\mapsto u=ax'+bx^2$ with $a,b$ suitable
constants, see for example~\cite{gewe1} page~273.
Equation~(\ref{u}) is nothing but the composition of $g\mapsto x$
and a Miura transformation. The holomorphic integration of the
Shiffman function $S_M$ could be performed just in terms of the theory
of the mKdV equation, but we will instead use the more standard
KdV theory.

Coming back to the holomorphic integration of $S_M$, this
problem  amounts to solving globally in $\C /\langle i\rangle $
the Cauchy problem  for equation (\ref{kdv}), i.e.,
\begin{quote}
{\sc Problem.} Find a meromorphic solution $u(z,t)$ of (\ref{kdv}) defined for
$z\in \C /\langle i\rangle $ and $t\in \D (\ve )$, whose initial condition is
$u(z,0)=u(z)$ given by~(\ref{u}).
\end{quote}

It is a well-known fact in KdV
theory (see for instance~\cite{gewe1} and also see Segal and
Wilson~\cite{SeWi}) that the above Cauchy problem can be solved
globally producing a holomorphic curve $t\mapsto u_t$ of meromorphic
functions $u(z,t)=u_t(z)$ on $\C /\langle i\rangle $ with
controlled Laurent expansions in poles of $u_t$, provided that the
initial condition $u(z)$ is an {\it algebro-geometric potential} for
the KdV equation (to be defined below); a different question is whether or not this family $u_t(z)$
solves our geometric problem related to minimal surfaces in ${\cal P}$.

To understand the notion of algebro-geometric potential, one must view
(\ref{kdv}) as the level $n=1$ in a sequence of evolution equations
in $u$, called the {\it KdV hierarchy,}
\begin{equation}
\label{kdvn}
\left\{ \frac{\partial u}{\partial t_n} = -\partial_z{\mathcal P}_{n+1}(u)\right\} _{n\geq 0},
\end{equation}
where ${\mathcal P}_{n+1}(u)$ is a differential operator given by a polynomial expression of $u$
and its derivatives with respect to $z$ up to order $2n$. These operators,
which are closely related
to Lax Pairs (see Section~2.3 in~\cite{gewe1}) are defined by
the recurrence law
\begin{eqnarray}
\label{law}
\left\{ \begin{array}{l}
\partial_z {\mathcal P}_{n+1}(u) = (\partial_{zzz} + 4u\,\partial_z+2u'){\mathcal P}_{n}(u), \\
\rule{0cm}{.5cm}{\mathcal P}_{0}(u)=\frac{1}{2}.
\end{array}\right.
\end{eqnarray}
In particular, ${\mathcal P}_1(u)=u$ and ${\mathcal P}_2(u)=u''+3u^2$
(plugging ${\mathcal P}_2(u)$ in (\ref{kdvn}) one obtains the KdV equation).
Hence, for each $n\in \N \cup \{ 0\} $, one must consider the
right-hand side of the $n$-th equation
in (\ref{kdvn}) as a polynomial expression of $u=u(z)$
and its derivatives with respect to $z$ up to order $2n+1$. We will
call this expression a {\it flow}, denoted by $\frac{\partial u}{\partial t_n}$.
A function $u(z)$ is said to be an
{\it algebro-geometric potential} of the KdV equation if there exists a flow
$\frac{\partial u}{\partial t_n}$  which is a linear combination of the
lower order flows in the KdV  hierarchy.

Once we understand the notion of algebro-geometric potential for the KdV equation,
we have divided our goal of proving the holomorphic integration of the Shiffman
function for any surface $M$ as in Theorem~\ref{thm5.9} into two final steps.

\begin{enumerate}[(T1)]
\item For every minimal surface $M\in {\cal P}$ satisfying the hypotheses of Theorem~\ref{thm5.9},
the function $u=u(z)$ defined by equation (\ref{u}) in terms of the Gauss map $g(z)$
of $M$, is an algebro-geometric potential of the KdV equation. This step would then give
a meromorphic solution $u(z,t)=u_t(z)$ of the KdV flow (\ref{kdv}) defined for
$z\in \C /\langle i\rangle $ and $t\in \D (\ve )$, with initial condition
$u(z,0)=u(z)$ given by~(\ref{u}).

\item With $u_t(z)$ as in (T1), it is possible to define a holomorphic curve $t\mapsto g_t\in {\cal W}$
with $g_0=g$ (recall that $g$ is the stereographic projection of the Gauss map of $M$), such that
$(g_t,\phi _3=dz)$ solves the period problem and defines a minimal surface $M_t\in {\cal P}$
that satisfies the conclusions of Theorem~\ref{thm5.9}.
\end{enumerate}

Property (T1) follows from a combination of the following two facts:
\begin{enumerate}[(a)]
\item Each flow $\frac{\partial u}{\partial t_n}$ in the KdV hierarchy
(\ref{kdvn}) produces a {\it bounded,} complex valued Jacobi function $v_n$ on
$\C /\langle i\rangle $ in a similar manner to the way that the flow
$\frac{\partial u}{\partial t_1}$ produces the complex Shiffman function $S_M+iS_M^*$.
\item Since the Jacobi functions $v_n$ produced in item~(a) are bounded on
$\C /\langle i\rangle $ and the Jacobi operator (\ref{eq:Jacobi}) is the Shr\"{o}dinger
operator given by (\ref{eq:Jacobi}) on $M$, then the $v_n$ can be considered to lie
in  the kernel of a Schr\"{o}dinger operator $L_M$ on $\C /\langle
i\rangle $ with bounded potential; namely, $L_M=(\Delta_{\esf
^1}+\partial^2_t)+V_M$ where $\C /\langle i\rangle $ has been
isometrically identified with $\esf^1\times \R$ endowed with the usual
product metric $d\theta ^2\times dt^2$, and the potential $V_M$ is
the square of the norm of the differential of the Gauss map of $M$
with respect to $d\theta ^2\times dt^2$ ($V_M$ is bounded since $M$
has bounded Gaussian curvature by item~6 of Theorem~\ref{thm5.7}).
Finally, the kernel of $L_M$
restricted to bounded functions
 is finite dimensional; this finite dimensionality was proved
by Meeks, P\'erez and Ros\footnote{Following arguments
by Pacard (personal communication), which in turn are inspired
in a paper by Lockhart and McOwen~\cite{loMcOw1}.}
in~\cite{mpr6} and also follows from a more general
result by Colding, de Lellis and Minicozzi~\cite{cm39}).
\end{enumerate}

As for the proof of property (T2) above,
the aforementioned control on the Laurent expansions in poles of $u_t$
coming from the integration of the Cauchy problem for the KdV
equation, is enough to prove that the corresponding
meromorphic function $g_t$ associated to $u_t$ by equation (\ref{u}) has the
correct behavior in poles and zeros; this
property together with the fact that both $S_M,S_M^*$ preserve
infinitesimally the complex periods along any closed
curve in $\C /\langle i \rangle $, suffice to show that the Weierstrass
data $(g_t,\phi _3=dz)$ solves the period problem for every $t$ and has the same flux vector $F=(h,0,1)$
as the original $M$, thereby giving rise to a surface $M_t\in {\mathcal P}$
with the desired properties. This finishes our sketch of proof of the holomorphic integration of
the Shiffman function of an arbitrary surface $M\in {\cal P}$ satisfying the hypotheses
of Theorem~\ref{thm5.9}.

\begin{remark} {\em
While the classification problem for properly embedded minimal planar domains stated
at the beginning of Section 5 has been completed, a natural
and important generalization to it remains open:

\begin{quote}
{\it
Problem: Classify all complete embedded minimal planar domains in $\R^3$.
}
\end{quote}

This more general classification question would be resolved if we knew a priori that
any complete embedded minimal surface $M$ of finite genus in $\rth$ is  proper. The
conjecture that this properness property holds
for such an $M$ is called the Embedded Calabi-Yau
Conjecture for Finite Genus.  In their ground breaking work in~\cite{cm35}, Colding and Minicozzi
solved this conjecture in the special case that the minimal surface $M$ has finite topology.
More recently, Meeks, P\'erez and Ros~\cite{mpr9} proved that
the conjecture holds if and only it $M$ has a countable number of ends. However, as
of the writing of this manuscript,
the solution of the Embedded Calabi-Yau
Conjecture for Finite Genus remains unsettled.

}
\end{remark}

\bibliographystyle{plain}
\bibliography{bill}

\end{document}